\tikzstyle{startstop} = [rectangle, rounded corners, minimum width=3cm, minimum height=1cm,text centered, draw=black]
\tikzstyle{process} = [rectangle, rounded corners, minimum width=3cm, minimum height=1cm, text centered, draw=black]
\tikzstyle{arrow} = [thick,->,>=stealth]
\date{\today}
\newcommand{\R}{{\mathbb R}}       % Field of real numbers
\newcommand{\N}{{\mathbb N}}
\newcommand{\Sp}{{\mathbb S}}
\newcommand{\Z}{{\mathbb Z}}    
\newcommand{\supp}[1]{\operatorname{supp}(#1)}   % Ring of integer numbers
\newtheorem{theorem}{Theorem}[section]
\newtheorem{lemma}[theorem]{Lemma}
\newtheorem{proposition}[theorem]{Proposition}
\newtheorem*{theorem*}{Theorem}
\theoremstyle{definition}
\newtheorem{brem}[theorem]{Remark}
\numberwithin{equation}{section}
\begin{document}
	
\title{An alternate approach to bilinear rough singular integrals}
\author[Bhojak]{Ankit Bhojak}
	\address{Ankit Bhojak\\
		Department of Mathematics\\
		Indian Institute of Science Education and Research Bhopal\\
		Bhopal-462066, India.}
	\email{ankitb@iiserb.ac.in, ankitbhojak101@gmail.com}

\author[Shrivastava]{Saurabh Shrivastava}
	\address{Saurabh Shrivastava\\
		Department of Mathematics\\
		Indian Institute of Science Education and Research Bhopal\\
		Bhopal-462066, India.}
	\email{saurabhk@iiserb.ac.in}
\thanks{}
\begin{abstract}
	The goal of this paper is to provide a new approach to address the $L^p-$boundedness of bilinear rough singular integral operators. This approach relies on local Fourier series expansion of input functions leading to trilinear estimates with desired decay in the frequency parameter. This approach departs from the existing methods of the wavelet decomposition of the multiplier employed in the work of Grafakos, He and Honz\'ik and in a series of subsequent papers in the context of bilinear rough singular integrals. With this new approach, we prove sharp $L^p-$estimates for maximally truncated bilinear rough singular integrals when the kernel is supported away from the diagonal in the plane.  Furthermore, this method allows us to deduce a new and self-contained proof of $L^p-$boundedness of the bilinear rough singular integral operators in all dimensions for the optimal range of exponents. 
\end{abstract}

\subjclass[2010]{Primary 42B20, 42B25}	
\maketitle
	
\section{Introduction}
The study of singular integrals of the form $Tf(x)=p.v.\int_{\R^d} K(y)f(x-y)dy$ is one of the central themes in harmonic analysis. For the kernel $K(y)=\Omega(\frac{y}{|y|})|y|^{-d}$ with $\Omega\in L\log L(\Sp^{d-1})$ satisfying mean zero condition, the boundedness of the corresponding operator $T$ on $L^p,\;1<p<\infty$ dates back to the work of Calder\'{o}n and Zygmund~\cite{CZ1956}. A different proof of the $L^p-$estimates of $Tf$, based on a double dyadic decomposition of the kernel and the Fourier transform estimates, was obtained by Duoandikoetxea and Rubio de Francia~\cite{DR1986}. The issue of weak type estimates at the endpoint $p=1$ was addressed by Christ \cite{Christ1988} and Hofmann~\cite{Hofmann1988} separately for dimension two and later by Seeger~\cite{Seeger1996} for all dimensions. We also refer to \cite{Honzik2020,BM2023,Lai2025,BS2026} for the recent progress concerning the endpoint bounds for the corresponding maximal rough singular operator.

Let $\Omega\in L^1(\Sp^{2d-1})$ be with mean value zero condition $\int_{\Sp^{2d-1}}\Omega(\theta)d\sigma(\theta)=0$, where $d\sigma$ is the surface measure on $\Sp^{2d-1}$. The bilinear rough singular integral operator $T_{\Omega}$ is defined as
\[T_{\Omega}(f_1,f_2)(x)=\int_{\R^{2d}}\Omega\left(\frac{(y_1,y_2)}{|(y_1,y_2)|}\right){|(y_1,y_2)|^{-2d}}f_1(x-y_1)f_2(x-y_2)\;dy_1dy_2.\]

Coifman and Meyer \cite[Theorem I]{CF1975} showed that if $\Omega$ is a function of bounded variation on $\Sp^1$, then $T_\Omega$ is bounded from $L^{p_1}(\R)\times L^{p_2}(\R)$ to $L^{p}(\R)$ for $(p_1,p_2,p)\in\mathcal{H}^1$, where 
\[\mathcal{H}^q=\left\{(p_1,p_2,p):\;1<p_1,p_2<\infty,\;\frac{1}{2}<p<\infty,\;\frac{1}{p_1}+\frac{1}{p_2}=\frac{1}{p},\;\text{and}\;\frac{1}{p}+\frac{1}{q}<2\right\},\;1\leq q\leq\infty.\]
In higher dimensions, Kenig and Stein (\cite{KenigStein}) and Grafakos and Torres \cite{GT2002} proved the boundedness of $T_\Omega$, for $\Omega$ being a Lipschitz function, in the larger range of exponents in $\mathcal{H}^\infty$. 

We note that when $\Omega$ is an odd function on $\Sp^1$, the operator $T_\Omega$ can be expressed as
\[T_\Omega(f_1,f_2)(x)=\int_{\Sp^1}\Omega(\theta_1,\theta_2)BH_{\theta_1,\theta_2}(f_1,f_2)(x)d\sigma(\theta_1,\theta_2),\]
where $BH_{\theta_1,\theta_2}$ is the bilinear Hilbert transform defined by
\[BH_{\theta_1,\theta_2}(f_1,f_2)(x)=\int_\R f_1(x-\theta_1t)f_2(x-\theta_2t)\frac{dt}{t}.\]
The boundedness of $BH_{\theta_1,\theta_2}$ was proved in the seminal works of Lacey and Thiele \cite{LT1997, LT1999}. By exploiting the uniform boundedness of $H_{\theta_1,\theta_2}$ \cite{GL2004,Li2006}, the authors in \cite{DGHST2011} established the boundedness of $T_\Omega$ for $(p_1,p_2,p)\in\mathcal{H}^1$ satisfying the condition  $\max\left\{\left|\frac{1}{p_1}-\frac{1}{p_2}\right|,\left|\frac{1}{p_2}-\frac{1}{p'}\right|,\left|\frac{1}{p'}-\frac{1}{p_1}\right|\right\}<\frac{1}{2}$ whenever the even part of $\Omega$ lies in the Hardy space $H^1(\Sp^1)$. 

The first complete result for the bilinear operator $T_\Omega$ without any smoothness assumption on $\Omega$ was obtained by Grafakos, He and Honz\'ik in \cite{GHH2018}. They showed that $T_\Omega$ is bounded for exponents in the range $\mathcal{H}^\infty$ for $\Omega\in L^\infty(\Sp^{2d-1})$. Their breakthrough idea was to apply a double dyadic decomposition of the kernel as in \cite{DR1986} and to decompose the multiplier further by employing a tensor-type compactly supported wavelet basis with vanishing moments. These ideas coupled with combinatorial arguments led them to an $L^2\times L^2\to L^1-$estimate with appropriate decay in the frequency parameter whenever $\Omega\in L^2(\Sp^{2d-1})$. Further, since the bilinear Calder\'on-Zygmund theory from \cite{GT2002} behaves well for $\Omega\in L^\infty$, an interpolation argument for bilinear operators applied to the two types of estimates mentioned above, leads to the sharp $L^p-$boundedness results for $T_\Omega$ when $\Omega\in L^\infty$. 

Grafakos, He and Honz\'ik \cite{GHH2018} also proved the $L^p-$boundedness of $T_\Omega$ in the local $L^2$ range: $2<p_1,p_2,<\infty,\;1\leq p\leq 2$ for $\Omega\in L^2$. This result was further extended to all $\Omega\in L^q$ for $q>\frac{4}{3}$ in \cite{GHS2020}. Building upon the $(2,2,1)-$inequality from \cite{GHH2018}, He and Park \cite{HP2023} improved the range of boundedness of $T_\Omega$ to exponents in $\mathcal{H}^q$ for $\Omega\in L^q,\;q>\frac{4}{3}$. In doing so, they developed bilinear Calder\'on-Zygmund theory further which is suitable for all $1<q\leq\infty$. We refer the reader to \cite{GHHP2023,GHHP2024} for multilinear extensions of this result. In \cite{GHS2019}, the authors showed that $\mathcal{H}^q$ is the largest open range for which the operator $T_\Omega$ is bounded for $\Omega\in L^q,\;1<q\leq\infty$. Finally, Dosidis and Slav\'ikov\'a \cite{DosidisSlavikova} showed that $T_\Omega$ is also bounded in the range $\mathcal{H}^q$ whenever $\Omega\in L^q,\;1<q\leq\infty$.  In this work, they utilized the boundedness of shifted square functions, whose details may be found in \cite{Muscalu}, to obtain admissible growth estimates for Littlewood-Paley pieces of the operator $T_\Omega$ in the Banach range of exponents for $\Omega\in L^1$. They also showed that the strong type boundedness of $T_\Omega$ cannot hold outside the region $\mathcal{H}^q$ for $1\leq p_1,p_2<\infty$. However, the weak type boundedness of $T_\Omega$ on the lines $p_1=1$, $p_2=1$ and $\frac{1}{p}+\frac{1}{q}=2$ remains open.

In this paper we provide an alternate approach to prove boundedness of the operator $T_\Omega$ for the complete range of exponents $\mathcal{H}^q$ for $\Omega\in L^q,\;1<q\leq\infty$ in all dimensions. In particular, we prove the following theorem.
\begin{theorem}\label{Thm:Rough}
	Let $d\geq1$, $1<q\leq\infty$ and $(p_1,p_2,p)\in\mathcal{H}^q$. Then, for all $\Omega\in L^q(\Sp^{2d-1})$ with $\int_{\Sp^{2d-1}}\Omega(\theta) d\sigma(\theta)=0$, we have
	\[\|T_\Omega(f_1,f_2)\|_{L^p(\R^d)}\lesssim\|\Omega\|_{L^q(\Sp^{2d-1})}\|f_1\|_{L^{p_1}(\R^d)}\|f_2\|_{L^{p_2}(\R^d)}.\]
\end{theorem}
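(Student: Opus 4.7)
The plan is to combine a standard dyadic decomposition of the kernel with a \emph{local Fourier series expansion of the inputs} at the same scale. This produces a single trilinear identity in which all of the roughness of $\Omega$ is packaged into the Fourier coefficients of one fixed, compactly supported normalization of the kernel, and quantitative decay of those coefficients then yields an estimate for each dyadic piece with summable decay in the scale; interpolation with the bilinear Calder\'on-Zygmund bounds available when $\Omega\in L^\infty$ delivers the full range $\mathcal{H}^q$.

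Fix $\psi\in C_c^\infty(\R)$ supported in $\{1/2\leq r\leq 2\}$ with $\sum_{j\in\Z}\psi(r/2^j)=1$, and write $T_\Omega=\sum_{j\in\Z}T_j$ with kernel $K_j(y)=\Omega(y/|y|)|y|^{-2}\psi(|y|/2^j)$; scaling gives $K_j(y)=2^{-2j}K_0(y/2^j)$ for a fixed $K_0$ supported in the annulus $\{1/2\leq|y|\leq 2\}$, so $\widehat{K_j}(\xi)=\widehat{K_0}(2^j\xi)$. I would then partition $\R$ into intervals $I$ of length $C\,2^j$ and, on each enlargement $\widetilde I\supset I+[-2^{j+1},2^{j+1}]$, expand the inputs in Fourier series $f_i\mathbf{1}_{\widetilde I}=\sum_{n\in\Z}c^{\widetilde I}_{i,n}e^{2\pi i n y/|\widetilde I|}$. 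Since $K_j$ forces $|y_1|,|y_2|\le 2^{j+1}$, substitution yields, for $x\in I$,
\[ T_j(f_1,f_2)(x)=\sum_{(n_1,n_2)\in\Z^2}\widehat{K_0}(\alpha n_1,\alpha n_2)\,c^{\widetilde I}_{1,n_1}c^{\widetilde I}_{2,n_2}\,e^{2\pi i(n_1+n_2)x/|\widetilde I|}, \]
with $\alpha$ a dimensionless constant depending only on the tile size.

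The key quantitative input is a decay estimate $|\widehat{K_0}(\alpha n)|\lesssim \|\Omega\|_{L^q(\Sp^1)}(1+|n|)^{-\delta(q)}$ for $n\in\Z^2$ and some $\delta(q)>0$ when $1<q\leq\infty$: the origin is handled by the mean-zero hypothesis, and the nonzero frequencies are treated by writing $\widehat{K_0}$ in polar coordinates and invoking van der Corput/stationary phase estimates along the circle, with a Hausdorff-Young input to absorb $\|\Omega\|_{L^q}$. Combining the identity above with Bessel's inequality on each tile, summing in $n$, and using orthogonality across tiles yields a trilinear estimate
\[ \|T_j(f_1,f_2)\|_{L^1(\R)}\lesssim 2^{-\epsilon(q)j}\,\|\Omega\|_{L^q(\Sp^1)}\,\|f_1\|_{L^2(\R)}\|f_2\|_{L^2(\R)}. \]
Summing in $j$ gives boundedness at the single vertex $(2,2,1)\in\mathcal{H}^q$. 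Invoking the bilinear Calder\'on-Zygmund theory developed for $\Omega\in L^q$ by He-Park and Dosidis-Slav\'ikov\'a, together with the trivial $L^{q'}$-bound, then interpolates this single-vertex estimate to the whole open region $\mathcal H^q$.

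\textbf{Main obstacle.} The technical core is establishing the decay $(1+|n|)^{-\delta(q)}$ for $\widehat{K_0}$ uniformly in $\Omega\in L^q$ with $\delta(q)$ large enough to beat the combinatorial cost of summing over $(n_1,n_2)\in\Z^2$. For $q$ close to $1$ the usual stationary-phase arguments degrade quickly, so a careful cone decomposition in the angular variable combined with duality and Hausdorff-Young will likely be needed. A secondary technical point is the bookkeeping for tile-boundary contributions: when $x$ is near $\partial I$ the enlargement $\widetilde I$ may fail to contain every relevant $x-y_i$, and those corrections must be absorbed without destroying the $j$-decay.
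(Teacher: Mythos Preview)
There is a genuine gap: the claimed estimate $\|T_j(f_1,f_2)\|_{L^1}\lesssim 2^{-\epsilon(q)j}\|\Omega\|_{L^q}\|f_1\|_{L^2}\|f_2\|_{L^2}$ is impossible. Since $K_j(y)=2^{-2j}K_0(2^{-j}y)$, a change of variables shows that the $L^2\times L^2\to L^1$ norm of $T_j$ is \emph{independent of $j$}. Your own identity makes this visible: the coefficients $\widehat{K_0}(\alpha n)$ carry no $j$-dependence, and Bessel's inequality on a tile only returns $\|f_i\|_{L^2(\widetilde I)}$, again with no decay. Hence you cannot sum in $j$, and the mean-zero hypothesis---which is what makes the full operator bounded---is never actually used beyond the single frequency $n=0$. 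Even at a fixed scale the argument is incomplete: the best available pointwise decay is $|\widehat{K_0}(n)|\lesssim(1+|n|)^{-1/2}$ for $\Omega\in L^\infty$ (worse for smaller $q$), which is far too weak to sum over $(n_1,n_2)\in\Z^2$ against two $\ell^2$ coefficient sequences.

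The paper's route is to follow the spatial decomposition $T_\Omega=\sum_i T_\Omega^i$ by a Littlewood--Paley decomposition of $f_1,f_2$, and to organize the pieces by the mismatch $k$ between the input frequency scale $2^{i+k}$ and the kernel's spatial scale $2^{-i}$. The decay is in $k$, not in $i$: the trilinear smoothing estimate (Theorem~1.2) asserts that the \emph{single-scale} operator $T_\Omega^0$, acting on inputs Fourier-supported in $\{\lambda\le|\xi|\le2\lambda\}$, gains a factor $\lambda^{-c}$. The local Fourier-series idea is applied \emph{there}, to the already frequency-localized inputs, so that the dominant Fourier coefficients are forced to cluster near $|n|\sim\lambda$; the gain then comes not from pointwise decay of $\widehat{K_0}$ but from integration by parts in the spatial and radial variables of the polar representation of $T_\Omega^0$, which either produces cancellation or confines $\theta$ to an arc of measure $\lesssim\lambda^{-1}$. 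The low-frequency piece $T_\Omega^{LL}$, where the mean-zero condition is actually spent, is handled separately via Coifman--Meyer, and the passage to the full range $\mathcal H^q$ requires additional growth estimates (shifted square functions in the Banach range, Calder\'on--Zygmund analysis at the boundary) before interpolation.
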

Our proof of \Cref{Thm:Rough} involves a standard spatial localization of the kernel to obtain local kernel of the form
\[K^0(y_1,y_2)=\Omega\left(\frac{(y_1,y_2)}{|(y_1,y_2)|}\right){|(y_1,y_2)|^{-2d}}\beta(|(y_1,y_2)|),\]
where $\beta\in C_c^\infty(\R)$ with $\supp{\beta}\subset(2^{-1},2)$. Let $T^0_\Omega$ be the local operator corresponding to $K^0$ defined by
\[T^0_\Omega(f_1,f_2)(x)=\int_{\R^{2d}}K^0(y_1,y_2)f_1(x-y_1)f_2(x-y_2)\;dy_1dy_2.\]
We will prove a trilinear estimate with a decay in the corresponding frequency parameter for the operator $T_\Omega^0$ acting on functions that are Fourier localized on an annulus. More precisely, we prove the following.
\begin{theorem}\label{thm:Smoothing}
	Let $d\geq1$, and $1<q\leq\infty$. Then, there exists $c>0$ such that for all $\lambda\geq1$, we have
		\begin{equation}\label{inq:Trilinearsmoothing1}
			|\langle T^0_\Omega(f_1,f_2),f_3\rangle|\lesssim\lambda^{-c}\|\Omega\|_{L^q(\Sp^{1})}\|f_{l_1}\|_{L^{2}(\R^d)}\|f_{l_2}\|_{L^{2}(\R^d)}\|f_{l_3}\|_{L^{\infty}(\R^d)},
		\end{equation}
	whenever $\supp{\widehat{f_{l_1}}}\cup\supp{\widehat{f_{l_2}}}\subset\left\{\lambda\leq|\xi|\leq2\lambda\right\}$ and $\supp{\widehat{f_{l_3}}}\subset\left\{|\xi|\leq2\lambda\right\}$ for $\{l_1,l_2,l_3\}=\{1,2,3\}$.
\end{theorem}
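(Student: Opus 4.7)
The plan is to leverage the compact spatial support of $K^0$ by decomposing into unit-scale pieces, expanding each input in a local Fourier series on an enlarged interval, and extracting the $\lambda^{-c}$ decay from the Fourier transform of $K^0$. The central analytic input is the classical Fourier-decay estimate
\[
|\widehat{K^0}(\xi_1,\xi_2)| \lesssim \|\Omega\|_{L^q(\Sp^1)}\,(1+|(\xi_1,\xi_2)|)^{-\epsilon},\qquad \epsilon = \epsilon(q)>0,
\]
which follows from the mean-zero condition on $\Omega$ via a stationary-phase / oscillatory-integral argument of the type used by Duoandikoetxea and Rubio de Francia. Since $f_{l_1}$ and $f_{l_2}$ are Fourier-localized in $\{|\xi|\sim\lambda\}$, every evaluation of $\widehat{K^0}$ occurring in the resulting expansion sits at modulus $\sim\lambda$, and thus carries a prefactor $\lambda^{-\epsilon}$ from the start.

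To set up the localization I fix a smooth partition of unity $\sum_{n\in\Z}\phi_n^2 \equiv 1$ with each $\phi_n$ supported in an interval of length $O(1)$ around $n$, and use $\supp{K^0}\subset \{|(y_1,y_2)|\le 2\}$ to write
\[
\langle T^0_\Omega(f_1, f_2), f_3\rangle = \sum_{n}\langle T^0_\Omega(f_1\phi_n, f_2\phi_n), f_3\phi_n\rangle + (\text{rapidly decaying tail}).
\]
On an enlarged interval $\tilde Q_n$ of fixed length $L$ containing the relevant supports, each $f_i\phi_n$ admits a Fourier-series expansion with coefficients $c^{(i,n)}_m$. The Fourier-support hypotheses on the $f_i$, combined with the rapid Fourier decay of $\phi_n$, force the essential support of $c^{(l_1,n)}_m$ and $c^{(l_2,n)}_m$ to lie in $|m|\sim\lambda L$ and of $c^{(l_3,n)}_m$ in $|m|\lesssim\lambda L$. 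Substituting into the local trilinear form and performing the $y$-integration against $K^0$ then reduces matters, up to rapidly decaying errors, to a constrained lattice sum
\[
\sum_n \sum_{m_1+m_2+m_3=0} c^{(1,n)}_{m_1}\, c^{(2,n)}_{m_2}\, c^{(3,n)}_{m_3}\,\widehat{K^0}\!\left(\tfrac{m_1}{L},\tfrac{m_2}{L}\right),
\]
each nonzero term of which already carries the factor $\lambda^{-\epsilon}$.

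The final step is a quantitative assembly via Cauchy--Schwarz and Plancherel. Using the constraint to sum out $m_3$ and then pairing the remaining Fourier coefficients by Cauchy--Schwarz yields, after Plancherel, a bound of the form $\lambda^{-\epsilon}\cdot(\text{polynomial loss})\cdot\|f_{l_1}\phi_n\|_2\|f_{l_2}\phi_n\|_2\|f_{l_3}\phi_n\|_2$, which after a further Cauchy--Schwarz in $n$ collapses to the global norms $\|f_{l_1}\|_2\|f_{l_2}\|_2\|f_{l_3}\|_\infty$. The main obstacle, which I expect to dominate the technical work, is controlling this polynomial loss: the coefficients $c^{(l_3,n)}_m$ are controlled pointwise only by $\|f_{l_3}\|_\infty$ and there are $\sim\lambda$ non-negligible indices, so a naive count costs a factor $\lambda^{1/2}$, which is borderline against the available decay exponent $\epsilon\le 1/q'$. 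I would overcome this by a finer partitioning of the lattice sum according to the size of $|m_1+m_2|$: in the diagonal regime $|m_1+m_2|\ll\lambda$, the mean-zero condition on $\Omega$ yields enhanced cancellation in $\widehat{K^0}$, whereas in the off-diagonal regime the effective number of contributing $m_3$ values is strictly less than $\lambda$. A careful combination of these two regimes, possibly assisted by a square-function-type orthogonality across $n$, should produce the required net decay $\lambda^{-c}$ for some $c=c(q)>0$.
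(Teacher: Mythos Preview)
Your overall architecture---spatial localization to unit scale, local Fourier series expansion of the inputs, and reduction to a constrained lattice sum carrying a factor of $\widehat{K^0}$---is essentially the paper's. The paper also incurs the $\lambda^{1/2}$ loss you identify (from one application of Cauchy--Schwarz over $\sim\lambda$ indices) and compensates via the angular gain $\sigma\{\theta\in\Sp^1:|(k_1,k_2)\cdot\theta|\le\lambda^{\epsilon_2}\}\lesssim\lambda^{-1+\epsilon_2}$, which is precisely the content of the pointwise bound $|\widehat{K^0}(\xi)|\lesssim\|\Omega\|_\infty|\xi|^{-1}$ you invoke.

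The genuine gap is for $1<q\le 2$. For $\Omega\in L^q$ the best available pointwise decay is $|\widehat{K^0}(\xi)|\lesssim\|\Omega\|_q|\xi|^{-1/q'}$, and your own balance $\lambda^{1/2-1/q'}$ then gives no decay. Your proposed rescue does not work. The mean-zero condition on $\Omega$ forces $\widehat{K^0}$ to vanish only at the origin $(\xi_1,\xi_2)=0$; it yields no enhanced cancellation along the antidiagonal $\xi_1+\xi_2=0$ at height $|\xi_i|\sim\lambda$, so the ``diagonal regime'' gains nothing. In the ``off-diagonal regime'' there is still exactly one $m_3=-(m_1+m_2)$ per pair, always lying in the support $|m_3|\lesssim 2\lambda$ of $c^{(l_3,n)}$, so the count is not reduced either. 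The paper resolves this not by sharpening the oscillatory estimate but by interpolation: it first proves the decay estimate only at the endpoint $\Omega\in L^\infty$, $f_1,f_2,f_3\in L^\infty$ (where your scheme succeeds comfortably, since then $1/q'=1>1/2$), and then interpolates---treating $\Omega$ as a fourth input---against the decay-free single-scale bound
\[
|\langle T^0_\Omega(f_1,f_2),f_3\rangle|\lesssim\|\Omega\|_{L^{(q+1)/2}}\|f_1\|_{L^{(q+1)'}}\|f_2\|_{L^{(q+1)'}}\|f_3\|_{L^\infty},
\]
which uses a strictly weaker norm on $\Omega$. The interpolation parameter $\theta=(q-1)/(2q)>0$ then delivers the $L^q\times L^2\times L^2\times L^\infty$ estimate with exponent $\theta c_\infty>0$ for every $q>1$. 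This reduction step is the missing ingredient in your plan.
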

Christ and Zhou~\cite{ChristZhou} obtained the trilinear smoothing estimates in the context of bilinear maximal functions associated to a certain class of curves in the plane. The proof of \Cref{thm:Smoothing} is motivated by the  ideas developed in \cite{ChristZhou}. The key idea  in the proof involves a local Fourier series expansion of the test functions. This serves as a substitute to the wavelet decomposition of the multiplier as in \cite{GHH2018}. Further, we perform polar decomposition of the local bilinear operator $T_\Omega^0$. A careful oscillatory phase analysis with respect to the spatial and radial variables in the polar decomposition of the operator $T_\Omega^0$ leads to the trilinear estimate \eqref{inq:Trilinearsmoothing1}. We note that unlike the one-dimensional situation in \cite{ChristZhou}, a certain counting argument based on an interpolation principle of the $\ell_2$ and $\ell_\infty$ norms of the oscillatory integral coefficients is crucial to estimate the product of the Fourier coefficients in our higher dimensional analysis. The proof of \Cref{thm:Smoothing} is carried out in \Cref{sec:proofsmoothing}.

The decay estimate from \Cref{thm:Smoothing} is the key part in the proof of \Cref{Thm:Rough}. Next, we obtain  admissible growth estimates for the operator in the respective Banach and non-Banach set of exponents. The case of Banach exponents relies on the estimates for shifted square functions. We refer to \cite{DosidisSlavikova} for more details about the shifted square functions. The admissible growth estimates for exponents in the non-Banach range are based on single scale estimates obtained in \Cref{sec:singlescale} and the bilinear Calder\'on-Zygmund theory adapted to the boundary $\frac{1}{p}+\frac{1}{q}=2$. We refer the reader to \cite{ChristZhou,HP2023} for a similar approach. The proof of \Cref{Thm:Rough} is given in \Cref{sec:proofrough}.

Next, we extend the $L^p-$boundedness of $T_\Omega$ for $\Omega$ belonging to a larger class of Orlicz spaces.  
\begin{theorem}\label{Thm:RoughBanach}
	Let $d\geq1$, $1\leq p_1,p_2,p<\infty$ with $\frac{1}{p_1}+\frac{1}{p_2}=\frac{1}{p}$. Let $A=A(p_1,p_2,p)$ be defined as
	\[A=1+\max\left\{\frac{1}{p_1},\frac{1}{p_2},\frac{1}{p'}\right\}.\]
	Then, for all $\Omega\in L(LogL)^A(\Sp^{2d-1})$ with $\int_{\Sp^{2d-1}}\Omega(\theta) d\sigma(\theta)=0$, we have
	\[\|T_\Omega(f_1,f_2)\|_{L^{p}(\R^d)}\lesssim\|\Omega\|_{L(LogL)^A(\Sp^{2d-1})}\|f_1\|_{L^{p_1}(\R^d)}\|f_2\|_{L^{p_2}(\R^d)}.\]
\end{theorem}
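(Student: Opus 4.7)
The plan is to deduce \Cref{Thm:RoughBanach} from \Cref{Thm:Rough} via a Yano-type extrapolation, once a quantitative version of \Cref{Thm:Rough} has been established which tracks the growth of the operator-norm constant as $q\to 1^+$. Under the Banach hypotheses of \Cref{Thm:RoughBanach} one has $(p_1,p_2,p)\in\mathcal H^q$ for every $q>1$, so \Cref{Thm:Rough} applies; the first and main task is to revisit \Cref{sec:proofrough} and extract an explicit $q$-dependence of the form
\begin{equation*}
\|T_\Omega(f_1,f_2)\|_{L^p(\R)}\lesssim (q-1)^{-A}\,\|\Omega\|_{L^q(\Sp^{1})}\|f_1\|_{L^{p_1}(\R)}\|f_2\|_{L^{p_2}(\R)},\qquad q\to 1^+,
\end{equation*}
with $A=1+\max\{1/p_1,1/p_2,1/p'\}$. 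The trilinear smoothing inequality \eqref{inq:Trilinearsmoothing1} supplies decay $\lambda^{-c}$ with constants uniform in $q\in(1,2]$; the degenerate factor arises from the admissible growth estimates, namely the Banach-range shifted square function bounds and the single-scale bilinear Calder\'on-Zygmund bounds adapted to the boundary $\tfrac{1}{p}+\tfrac{1}{q}=2$. The ``$\max$'' appears because the trilinear form $\langle T_\Omega(f_1,f_2),f_3\rangle$ admits three dual pairings, each yielding a boundary Calder\'on-Zygmund constant that degenerates at a different rate as $q\to 1^+$; the worst of the three governs the interpolation.

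Given this quantitative estimate, the proof closes by a standard layer-cake decomposition of $\Omega$. Normalize $\|\Omega\|_{L(LogL)^A(\Sp^{1})}=1$ and, for $k\ge 1$, set $E_k=\{\theta\in\Sp^{1}:2^k<|\Omega(\theta)|\le 2^{k+1}\}$, $E_0=\{|\Omega|\le 2\}$. Define $\tilde\Omega_k=\Omega\mathbf 1_{E_k}-\frac{c_k}{|\Sp^{1}|}$ with $c_k=\int_{E_k}\Omega\,d\sigma$, so each $\tilde\Omega_k$ has mean zero and $\sum_k\tilde\Omega_k=\Omega$ (using $\int_{\Sp^{1}}\Omega=0$). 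A direct computation gives $\|\tilde\Omega_k\|_{L^q(\Sp^{1})}\lesssim 2^k\sigma(E_k)^{1/q}$ for every $q\in[1,\infty]$, while Chebyshev yields $\sigma(E_k)\lesssim 2^{-k}$.

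Choosing $q_k=1+(k+1)^{-1}$ forces $\sigma(E_k)^{1/q_k}\lesssim\sigma(E_k)$ and $(q_k-1)^{-A}\lesssim (k+1)^A$. Applying the quantitative form of \Cref{Thm:Rough} to each $\tilde\Omega_k$ and summing,
\begin{equation*}
\|T_\Omega(f_1,f_2)\|_{L^p(\R)}\le\sum_{k\ge 0}\|T_{\tilde\Omega_k}(f_1,f_2)\|_{L^p(\R)}\lesssim \Big(\sum_{k\ge 0}(k+1)^A\,2^k\sigma(E_k)\Big)\|f_1\|_{L^{p_1}(\R)}\|f_2\|_{L^{p_2}(\R)},
\end{equation*}
and the final sum is the distributional form of $\|\Omega\|_{L(LogL)^A(\Sp^{1})}$.

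The main obstacle is the quantitative form of \Cref{Thm:Rough}: one must audit the $q$-dependence of every constant in \Cref{sec:proofrough}, most sensitively through the single-scale estimates of \Cref{sec:singlescale} and the boundary bilinear Calder\'on-Zygmund theory on $\tfrac{1}{p}+\tfrac{1}{q}=2$. Verifying that the blow-up exponent is exactly $A=1+\max\{1/p_1,1/p_2,1/p'\}$ requires dualizing the trilinear form in each of the three slots and identifying the slowest-converging boundary constant; this is where the particular shape of $A$ is pinned down.
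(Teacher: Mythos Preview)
Your Yano-extrapolation plan is a genuinely different route from the paper's, but it carries two gaps. First, the extrapolation step is flawed: with $q_k=1+(k+1)^{-1}$ one has $1/q_k<1$, and for $\sigma(E_k)\le 1$ this gives $\sigma(E_k)^{1/q_k}\ge\sigma(E_k)$, not $\lesssim$; so the displayed sum does not reduce to the distributional form of $\|\Omega\|_{L(\log L)^A}$ as you claim. (One can try to repair this by choosing $q_k$ in terms of $\sigma(E_k)$ rather than $k$, but the resulting bound $\sum_k(\log(1/\sigma(E_k)))^A\,2^k\sigma(E_k)$ is not manifestly dominated by $\sum_k k^A\,2^k\sigma(E_k)$ and needs a further argument.) Second, and more substantively, the quantitative inequality $\|T_\Omega\|\lesssim(q-1)^{-A}\|\Omega\|_{L^q}$ is asserted rather than proved; you yourself flag it as the ``main obstacle'' and stop there. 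Extracting the precise blow-up exponent $A$ from the machinery of \Cref{sec:proofrough}---where the decay constant in \Cref{thm:Smoothing} already degenerates like $q-1$ and then feeds into the interpolation behind \Cref{lemma:p1p2pDecay}---is nontrivial, and your attribution of the degeneration to the boundary $\tfrac1p+\tfrac1q=2$ is off in the Banach range, where that boundary plays no role.

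The paper sidesteps both issues by never tracking the $q$-dependence at all. It fixes one $q>1$, takes $c>0$ from \Cref{lemma:p1p2pDecay}, and for each frequency scale $k$ splits $\Omega=\Omega^++\Omega^-$ at the $k$-dependent threshold $2^{ck/2}$. The bounded piece $\Omega^+$ is handled by the exponential decay of \Cref{lemma:p1p2pDecay}, contributing $\sum_k 2^{-ck/2}<\infty$; the large piece $\Omega^-$ is handled by \Cref{lemma:p1p2pGrowth}, which requires only $\|\Omega^-\|_{L^1}$ and yields $\sum_k k^{a_I}\|\Omega^-\|_{L^1}\lesssim\int_{\Sp^1}|\Omega|(\log|\Omega|)^{1+a_I}$ after interchanging the sums. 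The exponent $A=1+\max_I a_I=1+\max\{1/p_1,1/p_2,1/p'\}$ then drops out directly from the shifted-square-function growth rates, with no $q$-bookkeeping whatsoever.
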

Observe that \Cref{Thm:RoughBanach} recovers the result proved in \cite{DPS2024}. The proof of this theorem is given in \Cref{sec:proofrough1}.

Recently, Honz\'ik, Lappas and Slav\'ikov\'a in \cite{HLS2025} showed that the $L^p-$boundedness of the operator $T_\Omega$  can be extended to a much larger range of exponents $(p_1,p_2,p)$ for all $\Omega\in L^q(\Sp^1)$ under the condition that $\Omega$ is supported away from the diagonal $\{(\theta_1,\theta_2)\in\Sp^1:\theta_1=\theta_2\}$. We provide an alternate proof of their result by using the method of this paper. We prove the following theorem. 
\begin{theorem}\label{Thm:Roughantidiag}
	Let $1<q\leq\infty$ and $(p_1,p_2,p)\in\mathcal{H}^\infty$. Then, for all $\Omega\in L^q(\Sp^1)$ with mean zero condition $\int_{\Sp^{1}}\Omega(\theta) d\sigma(\theta)=0$ and $\supp{\Omega}\subset \{(\theta_1,\theta_2)\in\Sp^1:|\theta_1-\theta_2|\geq\frac{1}{2}\}$, we have  
	\[\|T_\Omega(f_1,f_2)\|_{L^p(\R)}\lesssim\|\Omega\|_{L^q(\Sp^{1})}\|f_1\|_{L^{p_1}(\R)}\|f_2\|_{L^{p_2}(\R)}.\]
\end{theorem}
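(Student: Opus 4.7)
The plan is to follow the scheme developed for Theorem~\ref{Thm:Rough}, with the improvement that the antidiagonal hypothesis on $\Omega$ enables single-scale estimates valid on the entire Coifman-Meyer range $\mathcal{H}^\infty$, rather than only $\mathcal{H}^q$. I would begin by localizing the kernel via the dyadic decomposition $K=\sum_{j\in\Z}K^j$, $K^j(y)=2^{-2j}K^0(2^{-j}y)$, so that $T_\Omega=\sum_j T^j_\Omega$. Applying a Littlewood-Paley decomposition $f_i=\sum_{k_i}f_i^{k_i}$ to the inputs, the trilinear form $\langle T_\Omega(f_1,f_2),f_3\rangle$ expands as a sum over $(j,k_1,k_2,k_3)$; by standard frequency balance for translation-invariant trilinear forms, only those configurations with the two largest indices comparable to $k^*=\max_i k_i$ contribute meaningfully.

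Theorem~\ref{thm:Smoothing} applies to the antidiagonal case without any modification; after the obvious rescaling it yields the decay
\[|\langle T^j_\Omega(f_1^{k_1},f_2^{k_2}),f_3^{k_3}\rangle|\lesssim 2^{-c(j+k^*)}\|\Omega\|_{L^q}\|f_{l_1}^{k_{l_1}}\|_{L^2}\|f_{l_2}^{k_{l_2}}\|_{L^2}\|f_{l_3}^{k_{l_3}}\|_{L^\infty}\]
whenever $j+k^*\geq 0$, with the $L^2$ slots occupied by the two highest-frequency functions. This is identical to the decay exploited in the proof of Theorem~\ref{Thm:Rough}.

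The crucial new ingredient is an improved single-scale estimate for $T^0_\Omega$ valid on all of $\mathcal{H}^\infty$. I would work from the polar representation
\[T^0_\Omega(f_1,f_2)(x)=\int_{\Sp^1}\Omega(\theta)\mathcal{B}_\theta(f_1,f_2)(x)\,d\sigma(\theta),\qquad \mathcal{B}_\theta(f_1,f_2)(x)=\int\beta(r)f_1(x-r\theta_1)f_2(x-r\theta_2)\frac{dr}{r},\]
and exploit the fact that under the antidiagonal hypothesis $|\theta_1-\theta_2|\geq\tfrac{1}{2}$ the direction $\theta$ stays uniformly away from the degenerate configuration $\theta_1=\theta_2$. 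The operator $\mathcal{B}_\theta$ is then a single-scale truncation of the bilinear Hilbert transform along a non-degenerate direction, and the combination of uniform bounds from~\cite{GL2004,Li2006} with the compactly supported smooth structure of the single-scale kernel yields $\|\mathcal{B}_\theta\|_{L^{p_1}\times L^{p_2}\to L^p}\lesssim 1$ uniformly in $\theta$. For $p\geq 1$, Minkowski applied to the polar integral then produces the single-scale bound $\|T^0_\Omega\|_{L^{p_1}\times L^{p_2}\to L^p}\lesssim\|\Omega\|_{L^q}$ throughout the Banach part of $\mathcal{H}^\infty$; for $p<1$ one invokes the bilinear Calder\'on-Zygmund theory for off-diagonal kernels, using that $K^0$ vanishes near $\{y_1=y_2\}$ and therefore satisfies the off-diagonal size and smoothness estimates required at the weaker boundary $\tfrac{1}{p}+\tfrac{1}{\infty}=2$, rather than the restrictive boundary $\tfrac{1}{p}+\tfrac{1}{q}=2$ that appears in the proof of Theorem~\ref{Thm:Rough}.

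Interpolating between the exponential decay from Theorem~\ref{thm:Smoothing} and the admissible single-scale growth produces the required decay factor for general triples $(p_1,p_2,p)\in\mathcal{H}^\infty$, and summation over $j$ and the Littlewood-Paley indices via square function inequalities then closes the proof. The main obstacle will be the single-scale estimate in the non-Banach range $p<1$, where Minkowski is unavailable and one must propagate the off-diagonal structure of $K^0$ through a careful bilinear Calder\'on-Zygmund argument to push beyond the $\tfrac{1}{p}+\tfrac{1}{q}=2$ barrier.
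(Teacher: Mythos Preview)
Your high-level plan is correct and matches the paper: follow the scheme of Theorem~\ref{Thm:Rough}, use Theorem~\ref{thm:Smoothing} unchanged, and replace the single-scale input by a stronger one that exploits the support hypothesis on $\Omega$. The only substantive modification the paper makes is to substitute the single-scale estimate~\eqref{singlescale:Lpbound_d=1} for~\eqref{singlescale:Lpbound} in the Calder\'on--Zygmund argument (specifically in the $I_{bb}$ term of Case~2 in Lemma~\ref{lemma:endpointCZ}), which pushes the weak-type endpoint from the line $\tfrac{1}{p}+\tfrac{1}{q}=2$ all the way to $(1,1,\tfrac12)$ regardless of $q$.

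Where your proposal diverges, and becomes problematic, is in how you obtain that improved single-scale estimate. The paper's argument is completely elementary: writing $T^0_\Omega$ in polar form and performing the change of variables $x\mapsto x+r\theta_2$, $r\mapsto r/(\theta_1-\theta_2)$ (admissible precisely because $|\theta_1-\theta_2|\ge\tfrac12$) yields $\|T^0_\Omega(f_1,f_2)\|_1\lesssim\|\Omega\|_1\|f_1\|_1\|f_2\|_1$ in two lines; Lemma~\ref{lemma:local} then upgrades this to the $(1,1,\tfrac12)$ bound. No BHT machinery enters.

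Your route via the uniform bilinear Hilbert transform bounds of \cite{GL2004,Li2006} is both unnecessary and insufficient. In the Banach range your operator $\mathcal{B}_\theta$ is a smooth single-scale average, bounded $L^{p_1}\times L^{p_2}\to L^p$ for \emph{any} $\theta$ by Minkowski and H\"older, so the uniform BHT theory adds nothing there. In the quasi-Banach range, which is the point of the whole exercise, the uniform BHT bounds do not reach $(1,1,\tfrac12)$ (nor even $(1,1,1)$), so they cannot supply what the $I_{bb}$ estimate requires. Your fallback to an unspecified ``bilinear Calder\'on--Zygmund theory for off-diagonal kernels'' does not fill this gap: the kernel $K^0$ has no smoothness in the angular variable, so standard CZ size/regularity hypotheses are unavailable. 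The genuine reason the antidiagonal hypothesis helps is the nondegeneracy of the map $(x,r)\mapsto(x-r\theta_1,x-r\theta_2)$, and that is exactly what the paper's change of variables captures.
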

 Indeed, the methods of this are so streamlined that we require only minor modifications in the proof of \Cref{Thm:Rough} to conclude \Cref{Thm:Roughantidiag}. In this case we rely on the single scale estimate \eqref{singlescale:Lpbound_d=1} instead of \eqref{singlescale:Lpbound}. The estimate  \eqref{singlescale:Lpbound_d=1} is adapted to incorporate the underlying support condition on $\Omega$ in \Cref{Thm:Roughantidiag}. 

Finally, we show that the methods of this paper are powerful enough to obtain $L^p-$boundedness of the maximally truncated bilinear operators $T_\Omega^*$ defined as follows
\[T_{\Omega}^*(f_1,f_2)(x)=\sup_{\epsilon>0}\left|\int_{|(y_1,y_2)|>\epsilon}\Omega\left(\frac{(y_1,y_2)}{|(y_1,y_2)|}\right){|(y_1,y_2)|^{-2d}}f_1(x-y_1)f_2(x-y_2)\;dy_1dy_2\right|.\]
The study of $L^p-$boundedness of the maximal operator $T_\Omega^*$ was initiated in \cite{BH2019}. They exploited the method of wavelet decomposition of the multiplier from \cite{GHH2018} to show that the operator $T_{\Omega}^*$ is bounded in the domain $\mathcal{H}^\infty$ whenever $\Omega\in L^\infty$. This result was further improved in \cite{GHHP2024ae} and the optimal $L^p-$boundedness of the operator in the range  $\mathcal{H}^q$ for $\Omega\in L^q$ was obtained in \cite{Park2025}. We demonstrate how our methods are applicable to obtaining $L^p-$bounds for rough maximal singular integral in the optimal range $\mathcal{H}^q$. We have the following result. 
\begin{theorem}\label{Thm:Roughmaximal}
	Let $d\geq1$, $1<q\leq\infty$ and $(p_1,p_2,p)\in\mathcal{H}^q$. Then, for all $\Omega\in L^q(\Sp^{2d-1})$ with $\int_{\Sp^{2d-1}}\Omega(\theta)d\sigma(\theta)=0$, we have
	\[\|T_\Omega^*(f_1,f_2)\|_{L^p(\R^d)}\lesssim\|\Omega\|_{L^q(\Sp^{2d-1})}\|f_1\|_{L^{p_1}(\R^d)}\|f_2\|_{L^{p_2}(\R^d)}.\]
	Moreover, if $\Omega\in L^q(\Sp^{1})$ with $\supp{\Omega}\subset \{(\theta_1,\theta_2)\in\Sp^1:|\theta_1-\theta_2|\geq\frac{1}{2}\}$, then we have
	\[\|T_\Omega^*(f_1,f_2)\|_{L^p(\R)}\lesssim\|\Omega\|_{L^q(\Sp^{1})}\|f_1\|_{L^{p_1}(\R)}\|f_2\|_{L^{p_2}(\R)},\]
	for $(p_1,p_2,p)\in\mathcal{H}^\infty$.
\end{theorem}
\begin{brem}
We note that \Cref{Thm:Roughmaximal} recovers the result of \cite{Park2025} in all dimensions. Furthermore, the result is new when $\Omega$ is supported outside a neighborhood of the diagonal in the plane. This extends the $L^p-$boundedness result of \Cref{Thm:Roughantidiag} for the maximal operator $T_\Omega^*$.
\end{brem}
In order to prove \Cref{Thm:Roughmaximal}, we need to consider the maximal operator $M_\Omega$ defined as
\[M_\Omega(f_1,f_2)(x)=\sup_{R>0}\frac{1}{R^{2d}}\int_{|(y_1,y_2)|\leq R}\left|\Omega\left(\frac{(y_1,y_2)}{|(y_1,y_2)|}\right)f_1(x-y_1)f_2(x-y_2)\right|dy_1dy_2.\]

The boundedness of $M_\Omega$ in bilinear and multilinear cases was obtained in \cite{BH2019} and \cite{GHHP2024ae} respectively. Their proofs relied on two type of estimates and interpolation between them. The first one is an $L^1\times L^1\to L^{\frac{1}{2},\infty}-$estimate when $\Omega\in L^\infty$. This estimate was obtained by dominating the operator $M_\Omega$ by the bilinear Hardy-Littlewood maximal function. The second one is an estimate for the Banach range of exponents obtained by dominating $M_\Omega$ by a product of certain directional maximal functions. Their methods do not allow us to incorporate the support condition on $\Omega$ in \Cref{Thm:Roughantidiag}. In order to exploit this additional fact that $\Omega$ is supported outside a neighborhood of the diagonal $\{\theta_1=\theta_2\}$ in the plane, we require new estimates. We employ the strategy of proof of \Cref{Thm:Rough} to $M_\Omega$ and obtain the following result.
\begin{theorem}\label{Thm:Roughsinglemaximal}
	Let $d\geq1$, $1<q\leq\infty$ and $(p_1,p_2,p)\in\mathcal{H}^q$. Then, for all $\Omega\in L^q(\Sp^{2d-1})$, we have
	\[\|M_\Omega(f_1,f_2)\|_{L^p(\R^d)}\lesssim\|\Omega\|_{L^q(\Sp^{2d-1})}\|f_1\|_{L^{p_1}(\R^d)}\|f_2\|_{L^{p_2}(\R^d)}.\]
	Moreover, if $\Omega\in L^q(\Sp^{1})$ with $\supp{\Omega}\subset \{(\theta_1,\theta_2)\in\Sp^1:|\theta_1-\theta_2|\geq\frac{1}{2}\}$, then we have
	\[\|M_\Omega(f_1,f_2)\|_{L^p(\R)}\lesssim\|\Omega\|_{L^q(\Sp^{1})}\|f_1\|_{L^{p_1}(\R)}\|f_2\|_{L^{p_2}(\R)},\]
	for $(p_1,p_2,p)\in\mathcal{H}^\infty$.
\end{theorem}
We note that the first part in the theorem above with no additional assumption on the support of $\Omega$ recovers the result proved in \cite{BH2019}. The second part of the theorem with the exponents lying in the improved range $\mathcal{H}^\infty$ is new. The details of proofs of \Cref{Thm:Roughmaximal} and \Cref{Thm:Roughsinglemaximal} are given in \Cref{sec:proofmaximal} and \Cref{sec:proofsinglemaximal} respectively.
\section{Notation and toolbox for bilinear operators}
The notation $A\lesssim B$ means that there exists a constant $C>0$ (independent of $A$ and $B$) such that $A\leq CB$.
For any multi-index $\alpha=(\alpha_1,\dots,\alpha_d)$ and $x=(x_1,\dots,x_d)\in\R^d$, we denote
\[|\alpha|_1=\alpha_1+\dots+\alpha_d,\quad\text{and}\quad x^\alpha=x_1^{\alpha_1}\dots x_d^{\alpha_d}.\]
The following is a bilinear version of Marcinkiewicz interpolation theorem which appeared in \cite[Lemma 2.1]{HP2023}.
\begin{theorem}[\cite{HP2023}]\label{realinterpolation}
	Let $0<p_{1,k},p_{2,k},p_{k} \leq \infty$ with $\frac{1}{p_{k}}=\frac{1}{p_{1,k}}+\frac{1}{p_{2,k}}$ for $k=1,2,3$. Suppose that $T$ is a bilinear operator having the mapping properties
	\[\left\|T\left(f_1, f_2\right)\right\|_{L^{p_{k}, \infty\left(\mathbb{R}^d\right)}} \leq M_k\left\|f_1\right\|_{L^{p_{1,k}}\left(\mathbb{R}^d\right)}\left\|f_2\right\|_{L^{p_{2,k}}\left(\mathbb{R}^d\right)}, \quad k=1,2,3,\]
	for Schwartz functions $f_1, f_2$ on $\mathbb{R}^d$. Then for any $0<\theta_k<1,\;k=1,2,3$ with $\theta_1+\theta_2+\theta_3=1$, and $0<p_1, p_2, p \leq \infty$ satisfying
	\begin{align*}
		\frac{1}{p_1}=\frac{\theta_1}{p_{1,1}}+\frac{\theta_2}{p_{1,2}}+\frac{\theta_3}{p_{1,3}}, \quad \frac{1}{p_2}=\frac{\theta_1}{p_{2,1}}+\frac{\theta_2}{p_{2,2}}+\frac{\theta_3}{p_{2,3}}, \quad \frac{1}{p}=\frac{\theta_1}{p_1}+\frac{\theta_2}{p_2}+\frac{\theta_3}{p_3},
	\end{align*}
	we have
	\[\left\|T\left(f_1, f_2\right)\right\|_{L^{p, \infty}\left(\mathbb{R}^d\right)} \lesssim M_1^{\theta_1} M_2^{\theta_2} M_3^{\theta_3}\left\|f_1\right\|_{L^{p_1}\left(\mathbb{R}^d\right)}\left\|f_2\right\|_{L^{p_2}\left(\mathbb{R}^d\right)}.\]
	Moreover, if the convex hull of $\left\{\left(\frac{1}{p_{1,k}},\frac{1}{p_{2,k}}\right),\;k=1,2,3\right\}$ forms a non-trivial triangle in $\mathbb{R}^2$, then
	\[\left\|T\left(f_1, f_2\right)\right\|_{L^p\left(\mathbb{R}^d\right)} \lesssim M_1^{\theta_1} M_2^{\theta_2} M_3^{\theta_3}\left\|f_1\right\|_{L^{p_1}\left(\mathbb{R}^d\right)}\left\|f_2\right\|_{L^{p_2}\left(\mathbb{R}^d\right)}.\]
\end{theorem}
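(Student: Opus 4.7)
The plan is a two-stage Marcinkiewicz-style argument. I would first establish the weak-type $L^{p,\infty}$ bound at the interpolated triple $(p_1,p_2,p)$ by the distribution-function method combined with a bilinear level-set decomposition of the inputs. Then, under the non-degenerate triangle hypothesis, I would upgrade to the strong $L^p$ bound by interpolating between two nearby weak-type data points produced by the first stage.

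For the weak-type step, after normalizing $\|f_j\|_{L^{p_j}}=1$ and fixing $\alpha>0$, I would decompose $f_j=f_j^\sharp+f_j^\flat$ with $f_j^\sharp=f_j\mathbf{1}_{\{|f_j|>t_j\}}$ and thresholds $t_j=\alpha^{a_j}$ to be chosen. By bilinearity,
\[|T(f_1,f_2)|\leq|T(f_1^\sharp,f_2^\sharp)|+|T(f_1^\sharp,f_2^\flat)|+|T(f_1^\flat,f_2^\sharp)|+|T(f_1^\flat,f_2^\flat)|,\]
so controlling $\{|T(f_1,f_2)|>\alpha\}$ reduces to bounding four similar level sets at threshold $\alpha/4$. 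To each of the four pieces I would assign one of the three weak-type hypotheses (indexed by $k\in\{1,2,3\}$); a short algebraic check using the barycentric relations $1/p_i=\sum_k\theta_k/p_{i,k}$ together with $\sum_k\theta_k=1$ should yield a valid choice of exponents $(a_1,a_2)$ and an assignment of endpoints to pieces so that each piece contributes at most $CM_1^{\theta_1}M_2^{\theta_2}M_3^{\theta_3}\,\alpha^{-p}$. Summing the four contributions gives the desired $L^{p,\infty}$ bound.

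For the strong-type upgrade, since the convex hull of $\{(1/p_{1,k},1/p_{2,k})\}_{k=1}^{3}$ is a non-degenerate triangle, the point $(1/p_1,1/p_2)$ lies in its interior and admits two perturbations $(\widetilde\theta_1^{(i)},\widetilde\theta_2^{(i)},\widetilde\theta_3^{(i)})$, $i=1,2$, with associated interpolated triples $(\widetilde p_1^{(i)},\widetilde p_2^{(i)},\widetilde p^{(i)})$ whose target exponents $\widetilde p^{(1)},\widetilde p^{(2)}$ are distinct and whose convex combination recovers $(p_1,p_2,p)$. The weak-type conclusion of the first stage supplies $L^{\widetilde p^{(i)},\infty}$ bounds at these two triples, and the classical two-endpoint bilinear Marcinkiewicz theorem interpolates them to the strong $L^p$ estimate at $(p_1,p_2,p)$.

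The main obstacle is the combinatorial weak-type stage: there are four pieces arising from the bilinear decomposition but only three endpoint estimates available, so one must simultaneously choose the matching between pieces and endpoints and the two threshold exponents $(a_1,a_2)$ so that every piece produces balanced powers of $\alpha$ with the correct product of constants $M_k^{\theta_k}$. Verifying that such an assignment is always feasible in every admissible parameter regime is the algebraic heart of the argument. The non-triviality of the triangle in the second statement is used precisely to keep the perturbation two-dimensional, so that one may straddle the intermediate exponent $p$ by two genuinely distinct weak-type data points and thereby exit the Lorentz-scale target to reach strong $L^p$.
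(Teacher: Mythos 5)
The paper does not actually prove this theorem; it is quoted verbatim from \cite{HP2023}, where it appears as Lemma 2.1, and the authors invoke it as a black box. So there is no ``paper's own proof'' to compare against here. With that caveat, a few remarks on your proposal are in order.

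Your outline follows the standard shape of the multilinear Marcinkiewicz interpolation argument (in the style of Janson, Grafakos--Kalton, and Grafakos's \emph{Modern Fourier Analysis}, Thm.~7.2.2), and you have correctly located the algebraic core of the weak-type stage. However, you have not actually resolved it. Splitting each $f_j$ at a threshold $\alpha^{a_j}$ produces four pieces $T(f_1^{\sharp/\flat},f_2^{\sharp/\flat})$ but you have only three weak-type hypotheses, and it is not automatic that an assignment of hypotheses to pieces exists so that every piece contributes $O(\alpha^{-p})$ with the correct product $M_1^{\theta_1}M_2^{\theta_2}M_3^{\theta_3}$; typically the same vertex must be reused for two of the four pieces, and the choice depends on the position of $(1/p_1,1/p_2)$ relative to the three vertices. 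Proving that a consistent choice of $(a_1,a_2)$ and an assignment always exists whenever the barycentric relations hold with $\theta_k\in(0,1)$ \emph{is} the theorem; stating that ``a short algebraic check should yield'' it is where the argument ends rather than begins. For the strong-type upgrade, the plan of perturbing to two weak-type triples and then running a two-endpoint bilinear Marcinkiewicz argument is sound in spirit, but you must be explicit about which two-endpoint multilinear Lorentz-space interpolation theorem you are invoking, verify that the constraint on the third Lorentz index (namely $1/r\le 1/r_1+1/r_2$) is saturated exactly at equality in the H\"older case you need, and check that the machinery survives in the quasi-Banach regime $p<1$, which is precisely the regime this lemma is used in throughout this paper. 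Finally, note that the displayed conclusions in the statement as quoted contain an index typo --- the constant should read $M_1^{\theta_1}M_2^{\theta_2}M_3^{\theta_3}$, not $M_0^{\theta_0}M_1^{\theta_1}M_2^{\theta_2}$ --- and you have (correctly) silently repaired that in your write-up.
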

The following complex interpolation theorem can be found in \cite[Corollary 7.2.11]{GrafakosmodernFA}.
\begin{theorem}[\cite{GrafakosmodernFA}]\label{complexinterpolation}
	Let $\mathcal{T}$ be an m-linear operator defined on the m-fold product of spaces of finitely simple functions of $\sigma$-finite measure spaces $\left(X_i, \mu_i\right)$ and taking values in the set of measurable functions of another $\sigma-$finite measure space $(Y,\nu)$. Let $1 \leq p_{0, k}, p_{1, k}\leq \infty$ for all $1 \leq k \leq m+1, 0<\theta<1$, such that
	\[\frac{1}{p_k}=\frac{1-\theta}{p_{0, k}}+\frac{\theta}{p_{1, k}},\quad\text{for all}\;k \in\{1, \ldots, m+1\}.\]
	Suppose that for all finitely simple functions $f_k$ on $X_k$ we have
	\begin{align*}
		&\left\|\mathcal{T}\left(f_1, \ldots, f_m\right)\right\|_{L^{p_{0,m+1}}} \leq M_0\left\|f_1\right\|_{L^{p_{0,1}}} \cdots\left\|f_m\right\|_{L^{p_{0, m}}}, \\
		&\left\|\mathcal{T}\left(f_1, \ldots, f_m\right)\right\|_{L^{p_{1,m+1}}} \leq M_1\left\|f_1\right\|_{L^{p_{1,1}}} \cdots\left\|f_m\right\|_{L^{p_{1, m}}}
	\end{align*}
	Then for all finitely simple functions $f_k$ on $X_k$ we have
	\[\left\|\mathcal{T}\left(f_1, \ldots, f_m\right)\right\|_{L^{p_{m+1}}} \leq M_0^{1-\theta} M_1^\theta\left\|f_1\right\|_{L^{p_1}} \cdots\left\|f_m\right\|_{L^{p_m}}.\]
	Moreover, when $p_1, \ldots, p_m<\infty$, the operator $\mathcal{T}$ admits a unique bounded extension from $L^{p_1}\left(X_1, \mu_1\right) \times \cdots \times L^{p_m}\left(X_m, \mu_m\right)$ to $L^{p_{m+1}}(Y, v)$.
\end{theorem}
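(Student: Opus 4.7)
The plan is to prove this statement via analytic interpolation using Hadamard's three-lines lemma, which is the classical route for complex multilinear interpolation due to Stein and Calder\'on. I would first establish the target inequality on finitely simple inputs by pairing with a finitely simple test function $g$ dual to $L^{p_{m+1}}$, and then extend to the full product space by density for the \emph{moreover} clause.

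To begin, fix finitely simple $f_1, \ldots, f_m$ on the $X_k$'s and a finitely simple $g$ on $Y$; by the duality pairing it suffices to estimate $\bigl|\int_Y \mathcal{T}(f_1, \ldots, f_m)\, g \, d\nu\bigr|$ under the normalizations $\|f_k\|_{L^{p_k}} = 1$ and $\|g\|_{L^{p_{m+1}'}} = 1$. Writing $f_k = \sum_j c_{k,j}\, e^{i\alpha_{k,j}}\, \chi_{E_{k,j}}$ with $c_{k,j} > 0$ and the sets $\{E_{k,j}\}_j$ pairwise disjoint of finite measure, and $g = \sum_l d_l\, e^{i\beta_l}\, \chi_{F_l}$ similarly, I would introduce for $z$ in the closed strip $\overline{S} = \{z : 0 \leq \mathrm{Re}\,z \leq 1\}$ the holomorphic families
\[
f_k(z) = \sum_j c_{k,j}^{P_k(z)}\, e^{i\alpha_{k,j}}\, \chi_{E_{k,j}}, \qquad P_k(z) = p_k\Bigl(\frac{1-z}{p_{0,k}} + \frac{z}{p_{1,k}}\Bigr),
\]
with the analogous $g(z)$ built from $Q(z) = p_{m+1}'\bigl(\tfrac{1-z}{p_{0,m+1}'} + \tfrac{z}{p_{1,m+1}'}\bigr)$, using the convention $1/\infty = 0$. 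These satisfy $f_k(\theta) = f_k$ and $g(\theta) = g$ because $P_k(\theta) = Q(\theta) = 1$; and on the boundary lines a direct computation gives $\|f_k(it)\|_{L^{p_{0,k}}} = \|f_k\|_{L^{p_k}}^{p_k/p_{0,k}} = 1$ and $\|f_k(1+it)\|_{L^{p_{1,k}}} = 1$ for all real $t$, with the same identities for $g$.

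Next, I would define the scalar function
\[
F(z) = \int_Y \mathcal{T}\bigl(f_1(z), \ldots, f_m(z)\bigr)(y)\, g(z)(y)\, d\nu(y).
\]
By multilinearity of $\mathcal{T}$, $F(z)$ is a finite linear combination of exponentials $c_{1,j_1}^{P_1(z)} \cdots c_{m,j_m}^{P_m(z)} d_l^{Q(z)}$ whose coefficients are the fixed complex numbers $\int_Y \mathcal{T}(\chi_{E_{1,j_1}}, \ldots, \chi_{E_{m,j_m}})\, \chi_{F_l}\, d\nu$; hence $F$ is entire in $z$ and uniformly bounded on $\overline{S}$. Combining the two hypothesized multilinear bounds with H\"older's inequality applied to $f_k(it), g(it)$ and to $f_k(1+it), g(1+it)$ gives $|F(it)| \leq M_0$ and $|F(1+it)| \leq M_1$ for all $t \in \mathbb{R}$. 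Hadamard's three-lines lemma applied at $z = \theta$ then produces $|F(\theta)| \leq M_0^{1-\theta} M_1^\theta$; since $F(\theta) = \int_Y \mathcal{T}(f_1, \ldots, f_m)\, g\, d\nu$, taking the supremum over all normalized simple $g$ and recovering the $L^{p_{m+1}}$ norm by duality yields the desired estimate.

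For the extension assertion, when $p_1, \ldots, p_m < \infty$ the finitely simple functions are dense in each $L^{p_k}(X_k, \mu_k)$, so the uniform multilinear inequality just proved extends $\mathcal{T}$ by multilinear continuity to a unique bounded operator $L^{p_1} \times \cdots \times L^{p_m} \to L^{p_{m+1}}$. The main technical point I anticipate having to handle carefully is verifying the boundedness and continuity of $F$ on the closed strip uniformly in $|\mathrm{Im}\,z|$ so that Hadamard's lemma is applicable; this is not actually difficult in our setting because the $f_k(z), g(z)$ are supported in fixed sets of finite measure and the exponents $P_k(z), Q(z)$ are affine, so on each boundary line only the moduli $c_{k,j}^{\mathrm{Re}\,P_k(z)}$ contribute to pointwise size and the problem reduces to the finite list of fixed numbers $\int \mathcal{T}(\chi_{E_{1,j_1}}, \ldots, \chi_{E_{m,j_m}})\, \chi_{F_l}$, which is why restricting to simple inputs at the start is essential.
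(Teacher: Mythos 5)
Your proof is correct and follows the standard Stein--Calder\'on analytic interpolation argument via Hadamard's three-lines lemma, which is exactly the route taken in Grafakos's \emph{Modern Fourier Analysis} (the source the paper cites without giving a proof). The only small point you might flag explicitly is the degenerate case $p_k=\infty$ for some $k\le m$, which forces $p_{0,k}=p_{1,k}=\infty$ and requires holding $f_k(z)\equiv f_k$ constant rather than applying the formula $P_k(z)=p_k(\tfrac{1-z}{p_{0,k}}+\tfrac{z}{p_{1,k}})$; the theorem's \emph{moreover} clause sidesteps this by assuming $p_1,\dots,p_m<\infty$, so it is a non-issue for the quantitative estimate as well.
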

We will require the following lemma from \cite[Lemma 7.5.2(b)]{GrafakosmodernFA} to deal with medium frequency terms.
\begin{lemma}[\cite{GrafakosmodernFA}]\label{lemma:GrafMedium}
	Let $F_j$ be $L^2$ functions that satisfy $\sum_{j \in \Z}\left\|F_j\right\|_{L^2}^2<\infty$. Suppose that the Fourier transforms of $F_j$ are supported in the annulus $2^{j+b_1} \leq|\xi| \leq 2^{j+b_2}$. Then for any $0<p<\infty$ there is a constant $C>0$ (depending on $d,p,b_1,b_2$) such that
	\[\left\|\sum_{j \in \Z} F_j\right\|_{L^p} \leq C\left\|\left(\sum_{j \in \Z}\left|F_j\right|^2\right)^{\frac{1}{2}}\right\|_{L^p}.\]
\end{lemma}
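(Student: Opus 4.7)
\textbf{Proof plan for Lemma~\ref{lemma:GrafMedium}.} The key content of the hypothesis is a bounded-overlap condition on the Fourier supports: at most $N := b_2 - b_1 + 1$ of the annuli $\{2^{j+b_1} \leq |\xi| \leq 2^{j+b_2}\}$ can contain any given frequency. My plan is to reduce the inequality to the Fefferman--Stein vector-valued maximal inequality via Littlewood--Paley theory.

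Fix a smooth Littlewood--Paley partition of unity $\{\psi_k\}_{k\in\Z}$ on $\R^d$ with $\psi_k$ supported in a dyadic annulus wide enough to cover $\operatorname{supp}\widehat{F_j}$ when $j = k$, and set $\Delta_k f := (\psi_k \widehat{f})^{\vee}$. The support assumption forces $\Delta_k F_j = 0$ unless $|k - j| \leq N$, so $\Delta_k\bigl(\sum_j F_j\bigr) = \sum_{|j-k|\leq N} \Delta_k F_j$. For $1 < p < \infty$, the Littlewood--Paley characterization of $L^p$ gives
\[\Big\|\sum_j F_j\Big\|_{L^p} \lesssim \Big\|\Big(\sum_k \Big|\sum_{|j-k|\leq N}\Delta_k F_j\Big|^2\Big)^{1/2}\Big\|_{L^p}.\]
Cauchy--Schwarz on the $O(N)$-term inner sum, together with the pointwise bound $|\Delta_k F_j(x)| \lesssim M F_j(x)$ (the convolution kernel of $\Delta_k$ is dominated by a radially decreasing $L^1$-normalized Schwartz function), reduces the right-hand side to $\|(\sum_j |MF_j|^2)^{1/2}\|_{L^p}$, and the Fefferman--Stein vector-valued maximal inequality controls this by $\|(\sum_j |F_j|^2)^{1/2}\|_{L^p}$, completing the case $1 < p < \infty$.

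The case $0 < p \leq 1$ is the main obstacle: neither the $L^p$ Littlewood--Paley characterization nor the $L^p$ boundedness of the Hardy--Littlewood maximal function is available. The standard remedy is to replace $MF_j$ by the Peetre maximal function $\mathcal{M}_{2^j,r}F_j(x) := \sup_{y\in\R^d}\frac{|F_j(x-y)|}{(1+2^j|y|)^{d/r}}$ for some $r \in (0,p)$. Using the Fourier support of $F_j$ one has $|\Delta_k F_j(x)| \lesssim \mathcal{M}_{2^j,r}F_j(x)$ pointwise when $|k-j|\leq N$, together with the domination $\mathcal{M}_{2^j,r}F_j(x) \lesssim (M|F_j|^r(x))^{1/r}$. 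The Littlewood--Paley step is now carried out in the Hardy space $H^p$: finite partial sums of $\sum_j F_j$ have compact Fourier support and hence satisfy $\|\cdot\|_{H^p} \simeq \|\cdot\|_{L^p}$, and the LP square-function equivalence with $H^p$ holds for every $0 < p < \infty$. Applying the Fefferman--Stein inequality to $M$ acting on $\ell^{2/r}(\{|F_j|^r\})$ at the exponent $p/r > 1$ then yields the required bound, and a routine truncation argument passes from finite partial sums to the full series (the $L^2$ convergence of $\sum_j F_j$ is guaranteed a priori by Plancherel and the bounded overlap of supports).

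The main obstacle is the $p \leq 1$ regime; the remaining case is a straightforward assembly of classical Littlewood--Paley and Fefferman--Stein tools.
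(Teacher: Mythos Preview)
The paper does not prove this lemma; it is quoted verbatim from \cite[Lemma~7.5.2(b)]{GrafakosmodernFA} and used as a black box. Your argument is the standard one (and is essentially the proof given in the cited reference): Littlewood--Paley plus Fefferman--Stein for $1<p<\infty$, and for $0<p\le 1$ the passage through $H^p$ via the Peetre maximal function and the equivalence $\|\cdot\|_{H^p}\simeq\|\cdot\|_{L^p}$ on band-limited functions. There is nothing to compare against in the paper itself, and your sketch is correct.
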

We will also require the following improving property of local bilinear operators. The following lemma is from \cite[Proposition 4.1]{IPS}.
\begin{lemma}[\cite{IPS}]\label{lemma:local}
	Let $T$ be a bilinear operator satisfying the following.
	\begin{enumerate}
		\item There exists $N>0$ such that for functions $f_i, f_j$ supported in the unit cubes $Q_{l_i}$ and $Q_{l_j}$ with their lower left corners at $l_i$ and $l_j$ respectively and $\left\|l_i-l_j\right\|_{\infty}>N$, we have $T\left(f_i, f_j\right)=0$.
		\item There exists $R>0$ such that $T\left(f_1, f_2\right)$ is supported on $\left(\operatorname{supp}\left(f_1\right)+B(0, R)\right) \bigcup\left(\operatorname{supp}\left(f_2\right)+B(0, R)\right)$.
		\item The operator $T$ satisfies the estimate
		\[\left\|T\left(f_1, f_2\right)\right\|_1 \leq C\left\|f_1\right\|_{p_1}\left\|f_2\right\|_{p_2},\]
		for all functions $f_1, f_2$ supported in a fixed cube and for some fixed exponents $p_1$ and $p_2$ with $p_1, p_2 \geq 1, \frac{1}{p_1}+\frac{1}{p_2}=\frac{1}{p}>1$.
	\end{enumerate}
	Then, for all $r \in[p, 1]$, we have
	\[\|T\|_{L^{p_1} \times L^{p_2} \rightarrow L^r} \lesssim C.\]
\end{lemma}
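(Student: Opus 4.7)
The plan is to decompose the inputs via a unit-cube partition of unity and reduce everything to the local $L^{p_1}\times L^{p_2}\to L^1$ estimate supplied by hypothesis (3). Let $\{Q_l\}_{l\in\Z^d}$ denote the unit cubes with lower left corners at $l\in\Z^d$, and write $f_i = \sum_l f_i^l$ with $f_i^l = f_i \mathbf{1}_{Q_l}$. Then $T(f_1,f_2) = \sum_{l_1,l_2} T(f_1^{l_1},f_2^{l_2})$; by hypothesis (1) only pairs with $\|l_1 - l_2\|_\infty \leq N$ contribute, and by hypothesis (2) each surviving term $T(f_1^{l_1},f_2^{l_2})$ is supported in a set of measure bounded by a constant depending only on $N$, $R$, and $d$. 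In particular, at every point $x$ only boundedly many terms of the resulting sum are nonzero.

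Since $r\leq 1$, the elementary subadditivity $(\sum_j a_j)^r \leq \sum_j a_j^r$ for nonnegative $a_j$, applied pointwise and then integrated via Tonelli, gives
\[
\|T(f_1,f_2)\|_r^r \leq \sum_{\|l_1 - l_2\|_\infty \leq N} \|T(f_1^{l_1},f_2^{l_2})\|_r^r.
\]
For each surviving pair, the bounded-measure support of $T(f_1^{l_1},f_2^{l_2})$ combined with H\"older's inequality on the support yields $\|T(f_1^{l_1},f_2^{l_2})\|_r \lesssim \|T(f_1^{l_1},f_2^{l_2})\|_1$. Since $f_1^{l_1}$ and $f_2^{l_2}$ are both supported in a common cube of side comparable to $N$, hypothesis (3) (applied at the appropriate scale) then produces $\|T(f_1^{l_1},f_2^{l_2})\|_r \lesssim C\,\|f_1^{l_1}\|_{p_1} \|f_2^{l_2}\|_{p_2}$.

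It remains to estimate $\sum_{\|l_1-l_2\|_\infty \leq N} \|f_1^{l_1}\|_{p_1}^r \|f_2^{l_2}\|_{p_2}^r$. Reparametrize $l_2 = l_1 + k$ with $\|k\|_\infty \leq N$; there are $\lesssim N^d$ such shifts. For the boundary case $r=p$, H\"older in $l_1$ with exponents $p_1/p$ and $p_2/p$ (which are conjugate precisely because $p/p_1 + p/p_2 = 1$) yields $\sum_{l_1} \|f_1^{l_1}\|_{p_1}^p \|f_2^{l_1+k}\|_{p_2}^p \leq \|f_1\|_{p_1}^p \|f_2\|_{p_2}^p$. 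For general $r\in(p,1]$, factor out the crude bound $\|f_i^{l_i}\|_{p_i}^{r-p} \leq \|f_i\|_{p_i}^{r-p}$ from each summand to reduce to the $r=p$ case, then sum the $O(N^d)$ shifts and take $r$-th roots. The argument is elementary; the only substantive point is the correct ordering of operations—first pass to the $r$-th power sum (which requires $r\leq 1$), then upgrade the local $L^1$ estimate to an $L^r$ one using the bounded support of each piece, and only then apply H\"older in the cube indices. The assumption $p_1, p_2 \geq 1$ together with $p<1$ makes $p_1/p$ and $p_2/p$ both strictly greater than $1$, which is what legitimizes the final H\"older step.
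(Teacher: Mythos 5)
Your proof is correct. The paper cites \cite{IPS} for this lemma without reproducing a proof, so there is no in-text argument to compare against; your tiling-and-reassembly argument is the natural (and, I believe, standard) route: decompose both inputs over a unit-cube partition, use hypothesis (1) to restrict to nearly-diagonal pairs, use hypothesis (2) to get uniformly bounded support for each piece so that $\|\cdot\|_r$ is controlled by $\|\cdot\|_1$ via H\"older, apply hypothesis (3), and reassemble via $r$-subadditivity ($r\leq 1$) and a discrete H\"older inequality in the cube index. The key exponent check ($p_1/p$ and $p_2/p$ are conjugate and exceed $1$ exactly because $p_1,p_2\geq 1$ and $\frac{1}{p_1}+\frac{1}{p_2}=\frac{1}{p}>1$) is done correctly, and the interpolation from $r=p$ to $r\in(p,1]$ by factoring out $\|f_i\|_{p_i}^{r-p}$ is clean.

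The one point worth stating more carefully is the phrase ``applied at the appropriate scale'' in the local $L^1$ step: hypothesis (3) as written postulates the bound for functions supported in a single fixed cube, and for pairs $(l_1,l_2)$ with $\|l_1-l_2\|_\infty\leq N$ you need the same bound, uniformly, for any cube of side comparable to $N$. In the paper's application the operator $T_\Omega^0$ is convolution-type, hence translation invariant, which supplies the needed uniformity for free; if one wants the lemma stated precisely, hypothesis (3) should be read as holding uniformly over all translates of a cube of side $\gtrsim N$. This is a presentational gap rather than a mathematical one.
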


\section{Estimates for single scale operator}\label{sec:singlescale}
In this section, we prove $L^p-$bounds for single scale operator $T_\Omega^0$ that are useful to obtain similar bounds for its multiscale counterpart.
\begin{proposition}
	Let $d\geq1$, $1<q\leq\infty$ and $1\leq p_1,p_2\leq\infty$ with $\frac{1}{p_1}+\frac{1}{p_2}=2-\frac{1}{q}=\frac{1}{p}$, and $\eta$ be a compactly supported bounded function. Then, we have the following estimates.
		\begin{align}
			\|T^0_\Omega(f_1,f_2)\eta\|_{L^1(\R^d)}&\lesssim\|\Omega\|_{L^q(\Sp^{2d-1})}\|f_1\|_{{L^{p_1}(\R^d)}}\|f_2\|_{{L^{p_2}(\R^d)}},\label{singlescale:L1bound}\\
			\|T^0_\Omega(f_1,f_2)\eta\|_{L^{2q}(\R^d)}&\lesssim\|\Omega\|_{L^q(\Sp^{2d-1})}\|f_1\|_{{L^{(2q)'}(\R^d)}}\|f_2\|_{{L^{\infty}(\R^d)}}\label{singlescale:2qbound1},\\
			\|T^0_\Omega(f_1,f_2)\eta\|_{L^{2q}(\R^d)}&\lesssim\|\Omega\|_{L^q(\Sp^{2d-1})}\|f_1\|_{{L^{\infty}(\R^d)}}\|f_2\|_{{L^{(2q)'}(\R^d)}}\label{singlescale:2qbound2}.
		\end{align}
		Consequently, we have that
		\begin{equation}\label{singlescale:Lpbound}
			\|T^0_\Omega(f_1,f_2)\|_{{L^{p}(\R^d)}}\lesssim\|\Omega\|_{L^q(\Sp^{2d-1})}\|f_1\|_{{L^{p_1}(\R^d)}}\|f_2\|_{{L^{p_2}(\R^d)}}.
		\end{equation}
\end{proposition}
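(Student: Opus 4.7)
The plan is to prove the three estimates \eqref{singlescale:L1bound}, \eqref{singlescale:2qbound1}, \eqref{singlescale:2qbound2} by independent direct arguments, and then to deduce \eqref{singlescale:Lpbound} from \eqref{singlescale:L1bound} by invoking the local improving property of Lemma \ref{lemma:local}.

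I will first handle \eqref{singlescale:2qbound1} and \eqref{singlescale:2qbound2}, which reduce to each other by symmetry. Pulling $\|f_2\|_\infty$ out pointwise yields $|T^0_\Omega(f_1,f_2)(x)| \leq \|f_2\|_\infty (H\ast |f_1|)(x)$, where $H(y_1):=\int_\R |K^0(y_1,y_2)|\,dy_2$. A direct polar computation gives $\|K^0\|_{L^q(\R^2)} \lesssim \|\Omega\|_{L^q(\Sp^1)}$; the factor $|y|^{-2}$ is harmless because $\beta$ localizes $|y|$ to $[1/2,2]$. Since the section $K^0(y_1,\cdot)$ is compactly supported with length $\lesssim 1$, H\"older's inequality in $y_2$ gives $\|H\|_{L^q(\R)} \lesssim \|K^0\|_{L^q(\R^2)} \lesssim \|\Omega\|_{L^q(\Sp^1)}$. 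Young's convolution inequality, with $\frac{1}{q}+\frac{1}{(2q)'}=1+\frac{1}{2q}$, then delivers \eqref{singlescale:2qbound1}; multiplication by $\eta$ only contributes an extra $\|\eta\|_\infty$.

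For the trilinear estimate \eqref{singlescale:L1bound}, I would view $(\Omega,f_1,f_2)\mapsto T_\Omega(f_1,f_2)\,\eta$ as a \emph{trilinear} operator and apply complex interpolation (Theorem \ref{complexinterpolation}) in the $\Omega$ slot between two endpoints. At $q=\infty$ one uses $|K^0| \lesssim \|\Omega\|_\infty \chi_{\{|y|\leq 2\}}$, applies Fubini, and observes that $\int |f_1(x-y_1)f_2(x-y_2)|\,dx$ depends only on $y_1-y_2$ and equals the value at $y_1-y_2$ of a convolution of $|f_1|$ with the reflection of $|f_2|$; integrating over the compact set $\{|y|\leq 2\}$ in the plane yields
\[
\|T^0_\Omega(f_1,f_2)\,\eta\|_1 \lesssim \|\Omega\|_\infty\,\|\eta\|_\infty\,\|f_1\|_1\|f_2\|_1,
\]
the endpoint $(q,p_1,p_2)=(\infty,1,1)$. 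At $q=1$, for any dual pair $(a,a')$, Fubini and H\"older in $x$ combined with $\|K^0\|_{L^1(\R^2)} \lesssim \|\Omega\|_{L^1(\Sp^1)}$ give
\[
\|T^0_\Omega(f_1,f_2)\,\eta\|_1 \lesssim \|\Omega\|_{L^1(\Sp^1)}\,\|\eta\|_\infty\,\|f_1\|_a\|f_2\|_{a'}.
\]
Interpolating with $\theta=1/q$ and letting $a$ vary over $[1,\infty]$ produces every admissible $(p_1,p_2)\in[1,\infty]^2$ on the line $\frac{1}{p_1}+\frac{1}{p_2}=2-\frac{1}{q}$, establishing \eqref{singlescale:L1bound}.

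Finally, \eqref{singlescale:Lpbound} follows from \eqref{singlescale:L1bound} via Lemma \ref{lemma:local}. Its structural hypotheses are immediate for $T^0_\Omega$ because $K^0$ is supported in $\{|y|\leq 2\}$: the operator vanishes on pairs of functions whose supports are separated by more than a fixed constant, and $T^0_\Omega(f_1,f_2)$ is supported within a bounded neighborhood of $\supp{f_j}$ for $j=1,2$. When $f_1,f_2$ are supported in a fixed unit cube, $T^0_\Omega(f_1,f_2)$ lives in a bounded set, so taking $\eta$ to be a bump equal to $1$ on that set and applying \eqref{singlescale:L1bound} provides the unweighted $L^1$ bound required by hypothesis (3) of Lemma \ref{lemma:local}. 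The lemma then automatically upgrades this to the global $L^p$ estimate with $p=1/(2-1/q)$, which is \eqref{singlescale:Lpbound}. The main technical point throughout is the polar computation $\|K^0\|_{L^q(\R^2)} \lesssim \|\Omega\|_{L^q(\Sp^1)}$ together with the coarea-type bound for $\int_{|y|\leq 2} g(y_1-y_2)\,dy$ at the $q=\infty$ endpoint; both crucially exploit the compact support of $\beta$ to absorb the $|y|^{-2}$ singularity.
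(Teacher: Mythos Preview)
Your argument is correct. The treatment of \eqref{singlescale:L1bound} via trilinear complex interpolation between the $q=\infty$ endpoint $(p_1,p_2)=(1,1)$ and the $q=1$ endpoints $(p_1,p_2)=(a,a')$, and the deduction of \eqref{singlescale:Lpbound} from \eqref{singlescale:L1bound} through Lemma~\ref{lemma:local}, match the paper exactly.

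The only substantive difference is in \eqref{singlescale:2qbound1}--\eqref{singlescale:2qbound2}. The paper again treats $(\Omega,f_1,f_2)\mapsto T^0_\Omega(f_1,f_2)\eta$ trilinearly and interpolates between the elementary endpoints
\[
\|\mathcal{T}(\Omega,f_1,f_2)\|_\infty\lesssim\|\Omega\|_\infty\|f_1\|_1\|f_2\|_\infty
\quad\text{and}\quad
\|\mathcal{T}(\Omega,f_1,f_2)\|_2\lesssim\|\Omega\|_1\|f_1\|_2\|f_2\|_\infty,
\]
the latter coming from Minkowski's integral inequality. Your route instead bounds $|T^0_\Omega(f_1,f_2)|\le \|f_2\|_\infty\,(H\ast|f_1|)$ with $H(y_1)=\int|K^0(y_1,y_2)|\,dy_2$, computes $\|H\|_{L^q}\lesssim\|K^0\|_{L^q(\R^2)}\lesssim\|\Omega\|_{L^q(\Sp^1)}$ via polar coordinates and the compact support of $\beta$, and applies Young's inequality directly. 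This is a genuinely more elementary argument for these two estimates: it avoids the interpolation machinery entirely and gives \eqref{singlescale:2qbound1}--\eqref{singlescale:2qbound2} for each fixed $q$ in one stroke. The paper's approach has the virtue of uniformity (the same interpolation device handles all three displayed estimates), while yours isolates the convolution structure more transparently. Either is perfectly adequate here.
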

\begin{proof}
	Consider the trilinear operator $\mathcal{T}$ defined as follows:
	\[\mathcal{T}(\Omega,f_1,f_2)(x):=T^0_\Omega(f_1,f_2)(x)\eta(x).\]
	We observe that for $q=1$ and $q=\infty$, the following bounds hold. 
	\begin{align*}
		\|\mathcal{T}(\Omega,f_1,f_2)\|_1\lesssim&\|\Omega\|_\infty\|f_1\|_1\|f_2\|_1\quad\text{and}\\
		\|\mathcal{T}(\Omega,f_1,f_2)\|_1\lesssim&\|\Omega\|_1\|f_1\|_{r_1}\|f_2\|_{r_2}\quad\text{for}\;\frac{1}{r_1}+\frac{1}{r_2}=1.
	\end{align*}
	Note that the first estimate is a simple consequence of the following inequality 
	\[\|\mathcal{T}(\Omega,f_1,f_2)\|_1\leq\|\Omega\|_\infty\int|\eta(x)|\int |f_1(x-y_1)|dy_1\int |f_2(x-y_2)|dx\lesssim\|\Omega\|_\infty\|f_1\|_1\|f_2\|_1.\]
	The second estimate is obtained by applying H\"{o}lder's inequality as below
	\begin{align*}
		\|\mathcal{T}(\Omega,f_1,f_2)\|_1\leq&\int\left|\Omega\left(\frac{(y_1,y_2)}{|(y_1,y_2)|}\right)\right|\left(\int |f_1(x-y_1)|^{r_1}dx\right)^\frac{1}{r_1}\left(\int |f_2(x-y_2)|^{r_2}dx\right)^\frac{1}{r_2}dy_1dy_2\\
		\lesssim&\|\Omega\|_1\|f_1\|_{r_1}\|f_2\|_{r_2}.
	\end{align*}
	The estimates for all $1<q<\infty$ follow by applying complex interpolation (\Cref{complexinterpolation}) to the operator $\mathcal{T}$.
	Next, observe that by the support property $\supp{K^0}\subset\{2^{-1}\leq|(y_1,y_2)|\leq2\}$ along with the above estimates, the operator $T^0_\Omega$ satisfies the hypothesis of \Cref{lemma:local}, and the inequality \eqref{singlescale:Lpbound} follows as a consequence of the same. 

	The inequalities \eqref{singlescale:2qbound1} and \eqref{singlescale:2qbound2} follow by interpolating between the estimates,
	\begin{eqnarray*}
		&\|\mathcal{T}(\Omega,f_1,f_2)\|_\infty\lesssim\|\Omega\|_\infty\|f_1\|_1\|f_2\|_\infty,\quad&\|\mathcal{T}(\Omega,f_1,f_2)\|_\infty\lesssim\|\Omega\|_\infty\|f_1\|_\infty\|f_2\|_1,\\
		&\|\mathcal{T}(\Omega,f_1,f_2)\|_2\lesssim\|\Omega\|_1\|f_1\|_2\|f_2\|_\infty,\quad&\|\mathcal{T}(\Omega,f_1,f_2)\|_2\lesssim\|\Omega\|_1\|f_1\|_\infty\|f_2\|_2.
	\end{eqnarray*}
	Note that the last two estimates in the above are simple consequences of Minkowski's integral inequality.
\end{proof}

\begin{proposition}
	Let $1<q\leq\infty$ and $1\leq p_1,p_2\leq\infty$ with $\frac{1}{p_1}+\frac{1}{p_2}=\frac{1}{p}$. Then, we have 
	\begin{equation}\label{singlescale:Lpbound_d=1}
		\|T^0_\Omega(f_1,f_2)\|_{{L^{p}(\R)}}\lesssim\|\Omega\|_{L^1(\Sp^{1})}\|f_1\|_{{L^{p_1}(\R)}}\|f_2\|_{{L^{p_2}(\R)}},
	\end{equation}
	whenever $\Omega$ is supported in the set $\{(\theta_1,\theta_2)\in\Sp^1:|\theta_1-\theta_2|\geq\frac{1}{2}\}$.
\end{proposition}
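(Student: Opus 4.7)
The plan is to first obtain an $L^{1}\times L^{1}\to L^{1}$ estimate with constant $\|\Omega\|_{L^{1}}$ by exploiting the separation hypothesis $|\theta_{1}-\theta_{2}|\geq 1/2$ in polar coordinates. Once this base estimate is in hand, Minkowski's integral inequality handles the Banach range $p\geq 1$, and \Cref{lemma:local} promotes the base estimate to the quasi-Banach range $p<1$.

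First, I pass to polar coordinates $(y_{1},y_{2})=r(\theta_{1},\theta_{2})$ to write
\[T_{\Omega}^{0}(f_{1},f_{2})(x)=\int_{\Sp^{1}}\Omega(\theta)\int_{1/2}^{2}r^{-1}\beta(r)\,f_{1}(x-r\theta_{1})f_{2}(x-r\theta_{2})\,dr\,d\sigma(\theta).\]
Integrating the pointwise triangle inequality in $x$, applying Fubini and substituting $u=x-r\theta_{1}$, I obtain
\[\|T_{\Omega}^{0}(f_{1},f_{2})\|_{1}\leq\int_{\Sp^{1}}|\Omega(\theta)|\int_{1/2}^{2}r^{-1}\beta(r)\,\phi(r(\theta_{1}-\theta_{2}))\,dr\,d\sigma(\theta),\]
where $\phi(a):=\int|f_{1}(u)||f_{2}(u+a)|\,du$ satisfies $\|\phi\|_{L^{1}}=\|f_{1}\|_{1}\|f_{2}\|_{1}$ by Fubini. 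Since $|\theta_{1}-\theta_{2}|\geq 1/2$ on $\supp{\Omega}$, the change of variables $t=r(\theta_{1}-\theta_{2})$ rewrites the inner integral as $\int k_{\theta}(t)\phi(t)\,dt$ for a kernel $k_{\theta}$ that is uniformly bounded in $\theta$ and supported in an interval of length $O(1)$. Hence the inner integral is $\lesssim \|\phi\|_{1}$, producing the base estimate $\|T_{\Omega}^{0}(f_{1},f_{2})\|_{1}\lesssim\|\Omega\|_{1}\|f_{1}\|_{1}\|f_{2}\|_{1}$.

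For the Banach range $p\geq 1$, Minkowski's integral inequality applied to the polar expression together with Hölder in $x$ gives
\[\|T_{\Omega}^{0}(f_{1},f_{2})\|_{p}\leq\int_{\Sp^{1}}|\Omega(\theta)|\int_{1/2}^{2}r^{-1}\beta(r)\|f_{1}(\cdot-r\theta_{1})f_{2}(\cdot-r\theta_{2})\|_{p}\,dr\,d\sigma\lesssim\|\Omega\|_{1}\|f_{1}\|_{p_{1}}\|f_{2}\|_{p_{2}}.\]
For the quasi-Banach range $p<1$, I verify the hypotheses of \Cref{lemma:local}: conditions (1) and (2) are immediate from $\supp{K^{0}}\subset\{|(y_{1},y_{2})|\leq 2\}$; and condition (3) follows by restricting $f_{1},f_{2}$ to a fixed unit cube and combining the base $L^{1}\times L^{1}\to L^{1}$ estimate with the embedding $L^{p_{i}}\hookrightarrow L^{1}$ on the unit cube to obtain $\|T_{\Omega}^{0}(f_{1},f_{2})\|_{1}\lesssim \|\Omega\|_{1}\|f_{1}\|_{p_{1}}\|f_{2}\|_{p_{2}}$. \Cref{lemma:local} then yields $\|T_{\Omega}^{0}\|_{L^{p_{1}}\times L^{p_{2}}\to L^{r}}\lesssim\|\Omega\|_{1}$ for every $r\in[p,1]$, and in particular for $r=p$.

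The main obstacle is the first step. The substitution $t=r(\theta_{1}-\theta_{2})$ introduces a Jacobian $|\theta_{1}-\theta_{2}|^{-1}$ that degenerates near the diagonal $\theta_{1}=\theta_{2}$; the hypothesis that $\Omega$ vanishes on a neighborhood of the diagonal is exactly what keeps this Jacobian and the resulting kernel $k_{\theta}$ uniformly bounded, allowing the inner integral to be absorbed into $\|\phi\|_{1}=\|f_{1}\|_{1}\|f_{2}\|_{1}$. Without such separation, $\phi$ would have to be controlled in $L^{\infty}$, which would require stronger hypotheses on $f_{1},f_{2}$ and destroy the optimal $\|\Omega\|_{L^{1}}$ dependence stated in the proposition.
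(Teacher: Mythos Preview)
Your proof is correct and follows essentially the same approach as the paper: both obtain the $L^{1}\times L^{1}\to L^{1}$ estimate by passing to polar coordinates and changing variables $t=r(\theta_{1}-\theta_{2})$ (where the separation hypothesis $|\theta_{1}-\theta_{2}|\geq\tfrac12$ controls the Jacobian), and then invoke \Cref{lemma:local} for the quasi-Banach range. You are slightly more explicit than the paper in covering the full Banach range via Minkowski and H\"older, whereas the paper only records the $(1,1,\tfrac12)$ endpoint, but the content is the same.
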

\begin{proof}
	We claim that the following $(1,1,1)-$bound holds true:
	\[\|T_\Omega^0(f_1,f_2)\|_1\lesssim\|\Omega\|_1\|f_1\|_1\|f_2\|_1.\]
	Once we have the estimate as claimed above, the required $(1,1,\frac{1}{2})-$inequality follows by \Cref{lemma:local}. 
	
	In order to prove the claim, we use polar co-ordinates to write
	\begin{align*}
		\|T_\Omega^0(f_1,f_2)\|_1\lesssim&\;\int_{\R}\int_{r=\frac{1}{2}}^2\int_{|(\theta_1-\theta_2)|\geq\frac{1}{2}}|\Omega(\theta_1,\theta_2)|f_1(x-r\theta_1)||f_2(x-r\theta_2)|d\sigma(\theta_1,\theta_2)drdx\\
		\lesssim&\;\int_{|(\theta_1-\theta_2)|\geq\frac{1}{2}}|\Omega(\theta_1,\theta_2)|\int_{r=\frac{1}{2}}^2\int_{\R}|f_1(x-r(\theta_1-\theta_2))||f_2(x)|dxdrd\sigma(\theta_1,\theta_2)\\
		\lesssim&\;\int_{|(\theta_1-\theta_2)|\geq\frac{1}{2}}|\Omega(\theta_1,\theta_2)|\int_{r=\frac{(\theta_1-\theta_2)}{2}}^{2(\theta_1-\theta_2)}\int_{\R}|f_1(x-r)||f_2(x)|dx\frac{dr}{(\theta_1-\theta_2)}d\sigma(\theta_1,\theta_2)\\
		\lesssim&\;\int_{|(\theta_1-\theta_2)|\geq\frac{1}{2}}|\Omega(\theta_1,\theta_2)|\int_{\R}\int_{\R}|f_1(x-r)||f_2(x)|dxdrd\sigma(\theta_1,\theta_2)\\
		\lesssim&\;\|\Omega\|_1\|f_1\|_1\|f_2\|_1.
	\end{align*}
\end{proof}

\section{Proof of \Cref{thm:Smoothing}}\label{sec:proofsmoothing}
We first prove the inequality \eqref{inq:Trilinearsmoothing1} whenever $f_1,f_2,f_3$ are functions such that $\supp{\widehat{f_{1}}}\cup\supp{\widehat{f_{2}}}\subset\left\{\lambda\leq|\xi|\leq2\lambda\right\}$ and $\supp{\widehat{f_{3}}}\subset\left\{|\xi|\leq2\lambda\right\}$. Let $\phi,\psi\in \mathcal{S}(\R^d)$ be even functions such that 
\begin{align*}
	\widehat\phi(\xi)=1\;\text{for}\;|\xi|\leq\frac{17}{8}&\quad\text{and}\;\supp{\widehat\phi}\subset \left\{|\xi|\leq\frac{9}{4}\right\},\\
	\widehat\psi(\xi)=1\;\text{for}\;\frac{7}{8}\leq|\xi|\leq\frac{17}{8}&\quad\text{and}\;\supp{\widehat\psi}\subset \left\{\frac{3}{4}\leq|\xi|\leq\frac{9}{4}\right\}.
\end{align*} 
For every $t>0$, we define $\phi_t(x)=t^d\phi(tx)$ and $\psi_t(x)=t^d\psi(tx)$. We note that using the Fourier supports of $f_1,f_2$, and $f_3$, we have
\[\langle T^0_\Omega(f_1,f_2),f_3\rangle=\langle T^0_\Omega(f_1*\psi_\lambda,f_2*\psi_\lambda),f_3*\phi_{\lambda}\rangle.\]
\subsection{Space localization of the operator}
Let $\eta\in C^\infty(\R^d)$ be supported in the cube $(-1,1)^d$ such that 
\begin{equation}\label{eqn:parofunity}
	\sum_{m\in\Z^d}\eta_m(x)=\sum_{m\in\Z^d}\eta(x-m)=1.
\end{equation}
Also, let $\widetilde{\eta}\in C_c^\infty(\R^d)$ be such that $\widetilde{\eta}(x)=1$ for $x\in[-4,4]^d$ and supported in $\left[-5,5\right]^d$. We write
\begin{align*}
	&\left|\left\langle T^0_\Omega\Big(f_1*\psi_\lambda,f_2*\psi_\lambda\Big),f_3*\phi_{\lambda}\right\rangle\right|\\
	\leq&\sum_{m\in\Z^d}\left|\left\langle T^0_\Omega\Big(\widetilde\eta_m^3(f_1*\psi_\lambda),\widetilde\eta_m^3(f_2*\psi_\lambda)\Big),\eta_m(f_3*\phi_{\lambda})\right\rangle\right|\\
	\leq&\sum_{m\in\Z^d}\left|\left\langle T^0_\Omega\Big(\widetilde\eta_m^2((\widetilde\eta_mf_1)*\psi_\lambda),\widetilde\eta_m^2((\widetilde\eta_mf_2)*\psi_\lambda)\Big),\eta_m(f_3*\phi_{\lambda})\right\rangle\right|\\
	&+\sum_{m\in\Z^d}\left|\left\langle T^0_\Omega\Big(\widetilde\eta_m^2(\widetilde\eta_m(f_1*\psi_\lambda)-(\widetilde\eta_mf_1)*\psi_\lambda),\widetilde\eta_m^2((\widetilde\eta_mf_2)*\psi_\lambda)\Big),\eta_m(f_3*\phi_{\lambda})\right\rangle\right|\\
	&+\sum_{m\in\Z^d}\left|\left\langle T^0_\Omega\Big(\widetilde\eta_m^2((\widetilde\eta_mf_1)*\psi_\lambda),\widetilde\eta_m^2(\widetilde\eta_m(f_2*\psi_\lambda)-(\widetilde\eta_mf_2)*\psi_\lambda)\Big),\eta_m(f_3*\phi_{\lambda})\right\rangle\right|\\
	&=I_1+I_2+I_3.
\end{align*}
The estimates of the commutator terms $I_2$ and $I_3$ are similar, so we only discuss the term $I_2$. By mean value theorem, we have that
\begin{align*}
	&\left|(\widetilde\eta_m(f_1*\psi_\lambda)-(\widetilde\eta_mf_1)*\psi_\lambda)(x)\right|\\
	\leq&\int|\widetilde\eta_m(x)-\widetilde\eta_m(y)||f_1(y)||\psi_\lambda(x-y)|dy\\
	\lesssim&\lambda^{-1}\int\lambda|x-y||\psi_\lambda(x-y)||f_1(y)|dy\\
	\leq&\lambda^{-1}|f_1|*\Psi_\lambda(x),
\end{align*}
where $\Psi(y)=|y|\psi(y)$.
Using the above estimate and Cauchy-Schwarz inequality we obtain
\begin{align*}
	I_2\lesssim&\lambda^{-1}\|f_3\|_\infty\int\int|K^0(y_1,y_2)||f_1|*\Psi_\lambda(x-y_1)|f_2|*|\psi_\lambda|(x-y_2)\sum|\eta_m(x)|dy_1dy_2dx\\
	\lesssim&\lambda^{-1}\|K^0\|_1\|f_1*\Psi_\lambda\|_2\||f_2|*|\psi_\lambda|\|_2\|f_3\|_\infty\\
	\leq&\lambda^{-1}\|\Omega\|_1\|f_1\|_2\|f_2\|_2\|f_3\|_\infty.
\end{align*}
Thus, it remains to estimate the term $I_1$. By an application of Cauchy-Schwarz inequality, it is enough to establish that there exists $c>0$ such that the inequality
\[\left|\left\langle T^0_\Omega\Big(\widetilde\eta_m^2((\widetilde\eta_mf_1)*\psi_\lambda),\widetilde\eta_m^2((\widetilde\eta_mf_2)*\psi_\lambda)\Big),\eta_m(f_3*\phi_{\lambda})\right\rangle\right|\lesssim\lambda^{-c}\|\Omega\|_q\|\widetilde\eta_m f_1\|_2\|\widetilde\eta_m f_2\|_2\|f_3\|_\infty,\]
holds for all $\Omega\in L^q(\Sp^{2d-1})$, $f_1,f_2\in L^2(\R^d)$ and $f_3\in L^\infty(\R^d)$. Moreover, by translation invariance it is enough to consider the case $m=0$.
\subsection{Reduction to an $(\infty,\infty,\infty)-$inequality}
\sloppy By inequality \eqref{singlescale:L1bound}, we have the following inequality without any decay in the frequency parameter $\lambda$.
\begin{equation}\label{est:singlescale1}
	\left|\left\langle T^0_\Omega\Big(\widetilde\eta^2((\widetilde\eta f_1)*\psi_\lambda),\widetilde\eta^2((\widetilde\eta f_2)*\psi_\lambda)\Big),\eta(f_3*\phi_{\lambda})\right\rangle\right|\lesssim\|\Omega\|_{\frac{q+1}{2}}\|\widetilde\eta f_1\|_{(q+1)'}\|\widetilde\eta f_2\|_{(q+1)'}\|f_3\|_\infty.
\end{equation}
Thus, by the above inequality and complex interpolation (\Cref{complexinterpolation}), it is enough to show that there exists $c>0$ such that
\begin{equation}\label{est:infty^4}
	\left|\left\langle T^0_\Omega\Big(\widetilde\eta^2(f_1*\psi_\lambda),\widetilde\eta^2(f_2*\psi_\lambda)\Big),\eta(f_3*\phi_{\lambda})\right\rangle\right|\lesssim\lambda^{-c}\|\Omega\|_\infty\|f_1\|_{\infty}\|f_2\|_{\infty}\|f_3\|_\infty,
\end{equation}
holds for all $\Omega\in L^\infty(\Sp^{2d-1})$, $f_1,f_2\in L^\infty([-5,5]^d)$ and $f_3\in L^\infty(\R^d)$.
\subsection{Fourier series expansion of functions} We begin by defining
\[f_{l,\lambda}(x)=\begin{cases}\widetilde\eta(x)(f_l*\psi_\lambda)(x),&l=1,2,\\\eta(x)(f_l*\phi_\lambda)(x),&l=3.\end{cases}\]

It is easy to see that $\left\|\frac{\partial^\alpha}{\partial x^\alpha} (f_{l,\lambda})\right\|_\infty\lesssim\lambda^{\alpha}\|f_l\|_\infty$, for $l=1,2,3$ and any $\alpha\in(\N\cup\{0\})^d$. We expand the functions $f_{l,\lambda}$ into their Fourier series as below
\begin{equation*}
	\widetilde\eta(x) f_{l,\lambda}(x)=\widetilde\eta(x)\sum_{k_l\in\Z^d}a_{l,k_l}e^{-2\pi i k_l.x},\quad\text{where}\quad a_{l,k_l}=\frac{1}{10}\int_{\R^d}f_{l,\lambda}(x)e^{2\pi i k_l.x}dx.
\end{equation*}
We now state some properties of the Fourier coefficients obtained above. The first is simply Plancherel's theorem and the other two estimates essentially quantifies the Fourier support of the functions in terms of their respective Fourier coefficients. We have the following,
\begin{lemma}\label{lemma:Fouriercoeff}
	Let $0<\epsilon_1<1$. Then we have the following estimates
	\begin{align}
		\|a_{l,k_l}\|_{\ell_2(k_l\in\Z^d)}^2\lesssim&\|f_{l,\lambda}\|_2^2\lesssim\|f_l\|_\infty^2,\quad\;l=1,2,3.\label{est:FourierBessel}\\
		\|a_{l,k_l}\|_{\ell_1(|k_l|>\lambda^{1+\epsilon_1})}\lesssim&\lambda^{-100}\|f_l\|_\infty,\quad\;l=1,2,3.\label{est:Fouriertail}\\
		\|a_{l,k_l}\|_{\ell_2(|k_l|\leq\frac{3}{8}\lambda)}\lesssim&\lambda^{-100}\|f_l\|_\infty,\quad\;l=1,2.\label{est:Fouriermain}
	\end{align}
\end{lemma}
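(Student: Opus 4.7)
\medskip

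\noindent\textbf{Proof proposal.} The three estimates are genuinely independent: each $a_{l,k_l}$ is (up to the normalising factor $1/10$) a Fourier coefficient of the compactly supported function $f_{l,\lambda}$, so \eqref{est:FourierBessel} is Bessel's inequality, \eqref{est:Fouriertail} comes from the smoothness bounds $\|\partial^\alpha f_{l,\lambda}\|_\infty\lesssim\lambda^{|\alpha|_1}\|f_l\|_\infty$ via integration by parts, and \eqref{est:Fouriermain} reflects the frequency localisation of $f_l\ast\psi_\lambda$ in the annulus $\{\lambda\leq|\xi|\leq 2\lambda\}$, which is only available for $l=1,2$.

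For \eqref{est:FourierBessel}, Parseval's identity on the interval of length $10$ containing the supports of $\widetilde\eta$ and $\eta$ gives $\sum_{k_l}|a_{l,k_l}|^2\lesssim\|f_{l,\lambda}\|_2^2$. Using $\|\psi_\lambda\|_1=\|\psi\|_1$ and $\|\phi_\lambda\|_1=\|\phi\|_1$, we have $\|f_l\ast\psi_\lambda\|_\infty,\|f_l\ast\phi_\lambda\|_\infty\lesssim\|f_l\|_\infty$, and the bounded support of $\widetilde\eta,\eta$ then yields $\|f_{l,\lambda}\|_2\lesssim\|f_l\|_\infty$. For \eqref{est:Fouriertail}, integrate by parts $N$ times in
\[a_{l,k_l}=\tfrac{1}{10}\int_{\R}f_{l,\lambda}(x)\,e^{2\pi i k_l x}\,dx;\]
since $f_{l,\lambda}$ is compactly supported, no boundary terms appear, and the derivative estimate produces $|a_{l,k_l}|\lesssim(\lambda/|k_l|)^{N}\|f_l\|_\infty$. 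Summing over $|k_l|>\lambda^{1+\epsilon_1}$ gives a bound of order $\lambda^{1+\epsilon_1-N\epsilon_1}\|f_l\|_\infty$, and choosing $N$ large enough depending on $\epsilon_1$ yields the required $\lambda^{-100}$.

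For \eqref{est:Fouriermain} (the case $l=1,2$) the strategy is to expose the fact that $\widehat{f_l\ast\psi_\lambda}$ is supported in $\{3\lambda/4\leq|\eta|\leq 9\lambda/4\}$ and that multiplication by $\widetilde\eta$ only smears this support by amounts controlled by the Schwartz function $\widehat{\widetilde\eta}$. Concretely, writing $(f_l\ast\psi_\lambda)(x)=\int f_l(y)\psi_\lambda(x-y)\,dy$ and swapping orders of integration, one has
\[a_{l,k_l}=\tfrac{1}{10}\int f_l(y)\,e^{2\pi i k_l y}\,H(k_l,y)\,dy,\qquad H(k_l,y)=\int\widetilde\eta(u+y)\psi_\lambda(u)\,e^{2\pi i k_l u}\,du,\]
so that $H(k_l,y)$ is the Fourier transform at frequency $-k_l$ of $u\mapsto\widetilde\eta(u+y)\psi_\lambda(u)$, which by the convolution theorem equals
\[\int e^{2\pi i(-k_l-\eta)y}\,\widehat{\widetilde\eta}(-k_l-\eta)\,\widehat{\psi_\lambda}(\eta)\,d\eta.\]
For $|k_l|\leq\tfrac{3}{8}\lambda$ and $\eta$ in the support of $\widehat{\psi_\lambda}$, we have $|-k_l-\eta|\geq 3\lambda/8$, so the Schwartz decay of $\widehat{\widetilde\eta}$ provides $|H(k_l,y)|\lesssim_N\lambda^{-N}$ pointwise in $y$. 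Plugging back in and using $\|f_l\|_\infty$ on a compact set of $y$'s yields $|a_{l,k_l}|\lesssim_N\lambda^{-N}\|f_l\|_\infty$, which after an $\ell_2$ sum over $|k_l|\leq 3\lambda/8$ loses only a factor of $\lambda^{1/2}$, easily absorbed by taking $N$ large.

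The main subtlety I anticipate is in \eqref{est:Fouriermain}: since $f_l\in L^\infty$ is only assumed bounded, $\widehat{f_l}$ is a tempered distribution and one cannot manipulate the convolution $\widehat{\widetilde\eta}\ast(\widehat{f_l}\widehat{\psi_\lambda})$ directly. The $y$-integration trick above circumvents this by keeping everything on the spatial side until the final step, and is the reason the argument only uses $f_l\in L^\infty$. The remaining details are routine.
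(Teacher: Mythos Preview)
Your proof is correct and follows essentially the same approach as the paper. For \eqref{est:FourierBessel} and \eqref{est:Fouriertail} your arguments coincide with the paper's (Bessel's inequality and integration by parts, respectively). For \eqref{est:Fouriermain} the paper writes $a_{1,k_1}=\widehat{\widetilde\eta}\ast(\widehat{f_1}\widehat{\psi_\lambda})(-k_1)$ directly and bounds it using $\|(f_1\ast\psi_\lambda)^\wedge\|_\infty\leq\|f_1\ast\psi_\lambda\|_1$ together with the same lower bound $|k_1+\xi|\geq\tfrac{3}{8}\lambda$ on the support of $\widehat{\psi_\lambda}$; your detour through the kernel $H(k_l,y)$ is just a spatial-side rewriting of this same computation. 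One small remark: your closing claim that the argument ``only uses $f_l\in L^\infty$'' slightly overstates things, since you explicitly invoke a compact set of $y$'s in the preceding line --- and indeed the paper's version also uses $f_l\in L^1$ (via the bound on $\|(f_1\ast\psi_\lambda)^\wedge\|_\infty$), which is available precisely because of the reduction to $f_1,f_2\in L^\infty([-5,5])$.
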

\begin{proof}
	The estimate \eqref{est:FourierBessel} follows from Bessel's inequality (see, for instance, \cite[Proposition 3.2.6]{GrafakosclassicalFA}).

	To obtain \eqref{est:Fouriertail}, we first observe that by multiple applications of integration by parts, we have that
	\[|a_{l,k_l}|\lesssim\frac{\lambda^{|\alpha|_1}}{|k_l^\alpha|}\|f_l\|_\infty,\quad\;\text{for all}\;\alpha\in(\N\cup\{0\})^d\;\text{and}\;l=1,2,3.\]
	We simply choose $\alpha=(N,N,\cdots,N)$ for $N\in\N$ such that $\epsilon_1N>100$ and the estimate \eqref{est:Fouriertail} follows by summing over $k_l$.

	To obtain \eqref{est:Fouriermain}, we note that for $|k_1|\leq\frac{3}{8}\lambda$ and $|\xi|\geq\frac{3\lambda}{4}$, we have
	\begin{align*}
		|k_1+\xi|\geq&\;|\xi|-|k_1|
		\geq\frac{3}{4}\lambda-\frac{3}{8}\lambda=\frac{3}{8}\lambda.
	\end{align*}
	Hence, the above estimate along with $\widehat{\widetilde\eta}\in\mathcal{S}(\R)$ implies that for $|k_1|\leq\frac{3}{8}\lambda$, we have
	\begin{align*}
		|a_{1,k_1}|=&\;\left|\widehat{\widetilde\eta}*(\widehat{f}_1\widehat{\psi}_\lambda)(- k_1)\right|\\
		\lesssim&\;\left|\int_{\frac{3\lambda}{4}\leq|\xi|\leq\frac{9\lambda}{4}}\widehat{\eta}(-k_1-\xi)\widehat{f}_1(\xi)\widehat{\psi}_\lambda(\xi)d\xi\right|\\
		\lesssim&\;\|(f_1*\psi_\lambda)^\wedge\|_\infty\int_{\frac{3\lambda}{4}\leq|\xi|\leq\frac{9\lambda}{4}}\frac{d\xi}{|k_1+\xi|^N}\\
		\lesssim&\;\|f_1*\psi_\lambda\|_1 \lambda^{d-N}\lesssim\|f_1\|_\infty \lambda^{d-N}.
	\end{align*}
	This in turn implies that $\|a_{1,k_1}\|_{\ell_2(|k_1|\leq\frac{3}{8}\lambda)}\lesssim\lambda^{\frac{3d}{2}-N}\|f_1\|_\infty$. Choosing $N\in\N$ large enough gives the estimate \eqref{est:Fouriermain}. \\
	The estimate for $l=2$ is similar.
\end{proof}

Next, we substitute the Fourier series expansions of functions in the left hand side of the expression \eqref{est:infty^4} to obtain,
\begin{align*}
	&\left|\left\langle T^0_\Omega\Big(\widetilde\eta^2(f_1*\psi_\lambda),\widetilde\eta^2(f_2*\psi_\lambda)\Big),\eta(f_3*\phi_{\lambda})\right\rangle\right|=\left|\sum_{\vec{\mathbf{k}}=(k_1,k_2,k_3)\in\Z^{3d}}\prod_{l=1}^{3}a_{l,k_l}I_{\vec{\mathbf{k}}}\right|,
\end{align*}
where the quantity $I_{\vec{\mathbf{k}}},\;\vec{\mathbf{k}}=\{k_1,k_2,k_3\}$ is given by
\[\int_{\R^d}\int_{\{2^{-1}\leq|(y_1,y_2)\leq2\}}K^0(y_1,y_2)\widetilde\eta(x-y_1)\widetilde\eta(x-y_2)\overline{\widetilde\eta(x)}e^{-2\pi i(k_1,k_2,k_3).(x-y_1,x-y_2,-x)}dy_1dy_2dx.\]

Thus using $|I_{\vec{\mathbf{k}}}|\lesssim\|\Omega\|_1$ and \Cref{lemma:Fouriercoeff}, we have that
\begin{align}\label{reducedoperator}
	&\left|\langle T^0_\Omega\Big(\widetilde\eta^2(f_1*\psi_\lambda),\widetilde\eta^2(f_2*\psi_\lambda)\Big),\eta(f_3*\phi_{\lambda})\rangle\right|\nonumber\\
	&\leq\Bigg|\sum_{\substack{\vec{\mathbf{k}}\in\Z^{3}\\\frac{3}{8}\lambda\leq|k_1|\leq\lambda^{1+\epsilon_1},\\\frac{3}{8}\lambda\leq|k_2|\leq\lambda^{1+\epsilon_1},\\|k_3|\leq\lambda^{1+\epsilon_1}}}\prod_{l=1}^{3}a_{l,k_l}I_{\vec{\mathbf{k}}}\Bigg|	+\;O(\lambda^{-50}\|\Omega\|_1\|f_1\|_\infty\|f_2\|_\infty\|f_3\|_\infty).
\end{align}
To see this assertion, we note that
\begin{align*}
	&\Bigg|\langle T^0_\Omega\Big(\widetilde\eta^2(f_1*\psi_\lambda),\widetilde\eta^2(f_2*\psi_\lambda)\Big),\eta(f_3*\phi_{\lambda})\rangle-\sum_{\substack{\vec{\mathbf{k}}\in\Z^{3}\\\frac{3}{8}\lambda\leq|k_1|\leq\lambda^{1+\epsilon_1},\\\frac{3}{8}\lambda\leq|k_2|\leq\lambda^{1+\epsilon_1},\\|k_3|\leq\lambda^{1+\epsilon_1}}}\prod_{l=1}^{3}a_{l,k_l}I_{\vec{\mathbf{k}}}\Bigg|\\
	\leq&\Bigg|\sum_{\substack{\frac{3}{8}\lambda\leq|k_1|\leq\lambda^{1+\epsilon_1},\\\frac{3}{8}\lambda\leq|k_2|\leq\lambda^{1+\epsilon_1},\\|k_3|>\lambda^{1+\epsilon_1}}}\prod_{l=1}^{3}a_{l,k_l}I_{\vec{\mathbf{k}}}\Bigg|+\Bigg|\sum_{\substack{\frac{3}{8}\lambda\leq|k_1|\leq\lambda^{1+\epsilon_1},\\|k_2|>\lambda^{1+\epsilon_1},\\|k_3|\leq\lambda^{1+\epsilon_1}}}\prod_{l=1}^{3}a_{l,k_l}I_{\vec{\mathbf{k}}}\Bigg|+\Bigg|\sum_{\substack{|k_1|>\lambda^{1+\epsilon_1},\\\frac{3}{8}\lambda\leq|k_2|\leq\lambda^{1+\epsilon_1},\\|k_3|\leq\lambda^{1+\epsilon_1}}}\prod_{l=1}^{3}a_{l,k_l}I_{\vec{\mathbf{k}}}\Bigg|\\
	&+\Bigg|\sum_{\substack{|k_1|<\frac{3}{8}\lambda,\\\frac{3}{8}\lambda\leq|k_2|\leq\lambda^{1+\epsilon_1},\\|k_3|\leq\lambda^{1+\epsilon_1}}}\prod_{l=1}^{3}a_{l,k_l}I_{\vec{\mathbf{k}}}\Bigg|+\Bigg|\sum_{\substack{\frac{3}{8}\lambda\leq|k_1|\leq\lambda^{1+\epsilon_1},\\|k_2|<\frac{3}{8}\lambda,\\|k_3|\leq\lambda^{1+\epsilon_1}}}\prod_{l=1}^{3}a_{l,k_l}I_{\vec{\mathbf{k}}}\Bigg|,\\
	=:&\sum_{j=1}^5 A_j.
\end{align*}
Observe that applying Cauchy-Schwarz inequality along with estimates \eqref{est:FourierBessel} and \eqref{est:Fouriertail} gives us the following bound. 
\begin{align*}A_1&\leq\|\Omega\|_1\lambda^{1+\epsilon_1}\|a_{1,.}\|_2\|a_{2,.}\|_2\|a_{3,k_3}\|_{\ell_1\{|k_3|>\lambda^{1+\epsilon_1}\}}\\
	&\lesssim\lambda^{-50}\|\Omega\|_1\|f_1\|_\infty\|f_2\|_\infty\|f_3\|_\infty.
	\end{align*}
The desired bounds for the terms $A_j,\;j=2,3,4,5$ are proved similarly with the exception that we use \eqref{est:Fouriermain} instead of \eqref{est:Fouriertail} for the cases $j=4,5$.

Therefore, it remains to estimate the first term in the right hand side of \eqref{reducedoperator}.
\subsection{Oscillatory analysis of $I_{\vec{\mathbf{k}}}$:} We use polar co-ordinates to express $I_{\vec{\mathbf{k}}}$ as
\begin{equation}\label{polarI}
I_{\vec{\mathbf{k}}}=\int_{\Sp^{2d-1}}\Omega(\theta)\int_{\R^d}\int_{\frac{1}{2}}^2\zeta_{\theta}(x,r)e^{-2\pi i\mathcal{P}_{\vec{\mathbf{k}},\theta}(x,r)}drdxd\sigma(\theta),
\end{equation}
where the functions $\zeta_{\theta}$ and $\mathcal{P}_{\vec{\mathbf{k}},\theta}(x,r)$ are defined as
\begin{align*}
	\zeta_{\theta}(x,r)=&\;r^{-1}\beta(r)\widetilde\eta(x-r\theta_1)\widetilde\eta(x-r\theta_2)\overline{\widetilde\eta(x)},\\
	\mathcal{P}_{\vec{\mathbf{k}},\theta}(x,r)=&\;(k_1+k_2-k_3).x-r(k_1,k_2)\cdot(\theta_1,\theta_2),\quad\theta=(\theta_1,\theta_2).
\end{align*}
The functions $\zeta_{\theta}$ and $\mathcal{P}_{\vec{\mathbf{k}},\theta}(x,r)$ satisfy the following properties:
\begin{align}
	|\partial^\alpha_x\partial^N_r\zeta_{\theta}(x,r)|\lesssim&\;\chi_{B(0,2)}(x)\chi_{[\frac{1}{2},2]}(r),\quad\forall\;(\alpha,N)\in(\N\cup\{0\})^{d+1},\label{integrand}\\
	\nabla_{x,r}\mathcal{P}_{\vec{\mathbf{k}},\theta}(x,r)=&\;\big(k_1+k_2-k_3,(k_1,k_2)\cdot(\theta_1,\theta_2)\big).\label{phase}
\end{align}
Let $0<\epsilon_2<1$ be a real number to be determined later. Based on the constant gradient of the phase function $\mathcal{P}_{\vec{\mathbf{k}},\theta}$ in the space variable, we decompose the required quantity in \eqref{reducedoperator} into two parts as follows
\begin{align*}
	\Bigg|\sum_{\substack{\frac{3}{8}\lambda\leq|k_1|\leq\lambda^{1+\epsilon_1},\\\frac{3}{8}\lambda\leq|k_2|\leq\lambda^{1+\epsilon_1},\\|k_3|\leq\lambda^{1+\epsilon_1}}}\prod_{l=1}^{3}a_{l,k_l}I_{\vec{\mathbf{k}}}\Bigg|\leq&\;\Bigg|\sum_{\substack{\frac{3}{8}\lambda\leq|k_1|\leq\lambda^{1+\epsilon_1},\\\frac{3}{8}\lambda\leq|k_2|\leq\lambda^{1+\epsilon_1},\\|k_3|\leq\lambda^{1+\epsilon_1},\\|k_1+k_2-k_3|>\lambda^{\epsilon_2}}}\prod_{l=1}^{3}a_{l,k_l}I_{\vec{\mathbf{k}}}\Bigg|+\Bigg|\sum_{\substack{\frac{3}{8}\lambda\leq|k_1|\leq\lambda^{1+\epsilon_1},\\\frac{3}{8}\lambda\leq|k_2|\leq\lambda^{1+\epsilon_1},\\|k_3|\leq\lambda^{1+\epsilon_1},\\|k_1+k_2-k_3|\leq\lambda^{\epsilon_2}}}\prod_{l=1}^{3}a_{l,k_l}I_{\vec{\mathbf{k}}}\Bigg|\\
	=:\;&S_1+S_2.
\end{align*}

\subsection*{Estimate for $S_1$:} For a fixed $k_l=(k_{l1},\dots,k_{ld}),\;l=1,2,3$, we choose the index $j_k\in\{1,2,\dots,d\}$ such that $|k_{1j_k}+k_{2j_k}-k_{3j_k}|\geq|k_{1j}+k_{2j_k}-k_{3j}|$ for all $j\in\{1,2,\dots,d\}$. Thus we have $|k_1+k_2-k_3|\leq d^{\frac{1}{2}}|k_{1j_k}+k_{2j_k}-k_{3j_k}|$. We apply $N-$fold integration by parts in the $x_{j_k}-$variable to obtain
\begin{align*}
	|S_1|\lesssim\;&\sum_{\substack{\frac{3}{8}\lambda\leq|k_1|\leq\lambda^{1+\epsilon_1},\\\frac{3}{8}\lambda\leq|k_2|\leq\lambda^{1+\epsilon_1},\\|k_3|\leq\lambda^{1+\epsilon_1},\\|k_1+k_2-k_3|>\lambda^{\epsilon_2}}}\prod_{l=1}^{3}|a_{l,k_l}|\int_{\Sp^{2d-1}}|\Omega(\theta)|\int_{\R^d}\int_{\frac{1}{2}}^2\frac{|\partial_{x_{j_k}}^N\zeta_{\theta}(x,r)|}{(|k_{1j_k}+k_{2j_k}-k_{3j_k}|)^N}drdxd\sigma(\theta)\\
	\lesssim\;&\frac{1}{\lambda^{\epsilon_2N}}\prod_{l=1}^{3}\sum_{|k_l|\leq\lambda^{1+\epsilon_1}}|a_{l,k_l}|\int_{\Sp^{2d-1}}|\Omega(\theta)|\int_{\R^d}\int_{\frac{1}{2}}^2|\partial_{x_{j_k}}^N\zeta_{\theta}(x,r)|drdxd\sigma(\theta)\\
	\lesssim\;&\frac{\lambda^{\frac{3(1+\epsilon_1)d}{2}}}{\lambda^{\epsilon_2N}}\|\Omega\|_1\prod_{l=1}^{3}\|a_{l,k_l}\|_2\\
	\lesssim\;&\lambda^{\frac{3(1+\epsilon_1)d}{2}-\epsilon_2N}\|\Omega\|_1\|f_1\|_\infty\|f_2\|_\infty\|f_3\|_\infty
\end{align*}
where we used Cauchy-Schwarz inequality along with estimate \eqref{est:FourierBessel} in the second last inequality. Choosing $N\in\N$ large enough produces
\begin{equation*}
	|S_1|\lesssim\lambda^{-50}\|\Omega\|_1\|f_1\|_\infty\|f_2\|_\infty\|f_3\|_\infty.
\end{equation*}
\subsection*{Estimate for $S_2$:} We begin by observing the following:
\begin{equation}\label{est:ell2}
	\|I_{(\cdot,\cdot,k_3)}\|_{\ell_2(\Z^{2d})}\lesssim\|\Omega\|_\infty,\quad\text{for any}\;k_3\in\Z^d.
\end{equation}
Indeed, we use Minkowski's inequality and  Bessel's inequality for the family of functions $F_x,\;x\in\R^d$ given by 
\[F_x(y_1,y_2)=K^0(y_1,y_2)\widetilde{\eta}(x-y_1)\widetilde{\eta}(x-y_2),\]
to obtain,
\begin{align*}
	\|I_{(\cdot,\cdot,k_3)}\|_{\ell_2(\Z^{2d})}&\lesssim\int_{\R^d}|\widetilde{\eta}(x)|\left(\sum_{(k_1,k_2)\in\Z^{2d}}|\widehat{F_x}(k_1,k_2)|^2\right)^\frac{1}{2}dx\\
	&\lesssim\int_{\R^d}|\widetilde{\eta}(x)|\|F_x\|_2dx\\
	&\lesssim\|\Omega\|_\infty.
\end{align*}
However, the following decay inequality for $I_{\vec{\mathbf{k}}}$ is crucial for our analysis.
\begin{lemma}\label{lemma:ellinfty}
	For any $0<\epsilon_2<1$ and $k_1,k_2,k_3\in\Z^d$ with $|(k_1,k_2)|\geq\frac{3}{8}\lambda$, we have
	\[|I_{\vec{\mathbf{k}}}|\lesssim\lambda^{-(1-\epsilon_2)}\|\Omega\|_\infty.\]
\end{lemma}
\begin{proof}
	For a fixed $0<\epsilon_2<1$, we write $I_{\vec{\mathbf{k}}}=I_{\vec{\mathbf{k}}}^++I_{\vec{\mathbf{k}}}^-$, where
\begin{align*}
	I_{\vec{\mathbf{k}}}^+&=\int_{\{(\theta_1,\theta_2)\in\Sp^{1}:|k_1.\theta_1+k_2.\theta_2|>\lambda^{\epsilon_2}\}}\Omega(\theta)\int_{\R^d}\int_{\frac{1}{2}}^2\zeta_{\theta}(x,r)e^{-2\pi i\mathcal{P}_{\vec{\mathbf{k}},\theta}(x,r)}drdxd\sigma(\theta),\\
	I_{\vec{\mathbf{k}}}^-&=I_{\vec{\mathbf{k}}}-I_{\vec{\mathbf{k}}}^+.
\end{align*}
For the term $I_{\vec{\mathbf{k}}}^+$, we apply $N-$fold integration by parts in the $r-$variable to obtain
\begin{align*}
	\left|I_{\vec{\mathbf{k}}}^+\right|&\lesssim \int_{\{|k_1.\theta_1+k_2.\theta_2|>\lambda^{\epsilon_2}\}}|\Omega(\theta)|\int_{\R^d}\int_{\frac{1}{2}}^2\frac{|\partial_{r}^N\zeta_{\theta}(x,r)|}{|k_1.\theta_1+k_2.\theta_2|^N}drdxd\sigma(\theta)\lesssim \frac{1}{\lambda^{\epsilon_2 N}}\|\Omega\|_1.
\end{align*}
We choose $N\in\N$ such that $\epsilon_2 N>100$.
We observe that the following elementary estimate holds. 
\begin{equation*}
	N(\omega,\varepsilon):=\sigma(\{\theta\in\Sp^{2d-1}:|\omega\cdot\theta|\leq\varepsilon\})\lesssim\varepsilon,\quad \text{for}\;\omega\in\Sp^{2d-1}.
\end{equation*}
Further, note that for $\frac{3}{8}\lambda\leq|(k_1,k_2)|$ and $|k_1\theta_1+k_2\theta_2|\leq\lambda^{\epsilon_2}$, we have that \[\left|\left(\frac{(k_1,k_2)}{|(k_1,k_2)|}\cdot(\theta_1,\theta_2)\right)\right|\lesssim\lambda^{-1+\epsilon_2}.\] 
In view of the above estimates, we obtain that
\begin{align*}
	|I_{\vec{\mathbf{k}}}^-|\lesssim\;&\int\limits_{\substack{\theta\in\Sp^{2d-1}:\\\left|\left(\frac{(k_1,k_2)}{|(k_1,k_2)|}\cdot(\theta_1,\theta_2)\right)\right|\lesssim\lambda^{-1+\epsilon_2}}}|\Omega(\theta)|\int_{\R^d}\int_{\frac{1}{2}}^2|\zeta_{\theta}(x,r)|drdxd\sigma(\theta)\\
	\lesssim\;&\|\Omega\|_\infty N\left(\frac{(k_1,k_2)}{|(k_1,k_2)|},4\lambda^{-1+\epsilon_2}\right)\\
	\lesssim\;&\lambda^{-(1-\epsilon_2)}\|\Omega\|_\infty.
\end{align*}
\end{proof}
We will also require the following counting lemma to estimate the sum of product of three Fourier coefficients over a fixed lattice by their product of $\ell_2$-norms with a fixed bounds on the lattice points.
\begin{lemma}\label{lemma:counting}
	Let $Q\subset\Z^{2d}$ with $|Q|<\infty$. Then, we have
	\[\sum_{(k_1,k_2)\in Q}\left|b_{1,k_1}b_{2,k_2}b_{3,k_1+k_2}\right|\lesssim |Q|^\frac{1}{4}\prod_{l=1}^3\|b_{l,k_l}\|_{\ell_2}. \]
\end{lemma}
\begin{proof}
	We define $Q_k=\{(k_1,k_2)\in Q:k_1+k_2=k\}$ and $E=\{k:|Q_k|>|Q|^\frac{1}{2}\}$. By Chebyshev's inequality, we have $|E|\leq |Q|^\frac{1}{2}$. We rewrite the desired sum as
	\begin{align*}
		\sum_{(k_1,k_2)\in Q}\left|b_{1,k_1}b_{2,k_2}b_{3,k_1+k_2}\right|&=\sum_{k\in E}\sum_{(k_1,k_2)\in Q_k}\left|b_{1,k_1}b_{2,k_2}b_{3,k}\right|+\sum_{k\notin E}\sum_{(k_1,k_2)\in Q_k}\left|b_{1,k_1}b_{2,k_2}b_{3,k_1+k_2}\right|\\
		&=\mathfrak{S}_1+\mathfrak{S}_2.
	\end{align*}
	By two fold application of Cauchy-Schwarz inequality, we have
	\begin{align*}
		\mathfrak{S}_1&=\sum_{k\in E}|b_{3,k}|\sum_{(k_1,k_2)\in Q_k}|b_{1,k_1}b_{2,k_2}|\\
		&\leq\sum_{k\in E}|b_{3,k}|\|b_{1,k_1}\|_{\ell_2}\|b_{2,k_2}\|_{\ell_2}\\
		&\leq|E|^\frac{1}{2}\prod_{l=1}^3\|b_{l,k_l}\|_{\ell_2}\leq|Q|^\frac{1}{4}\prod_{l=1}^3\|b_{l,k_l}\|_{\ell_2}.
	\end{align*}
	By Cauchy-Schwarz inequality, we also have
	\begin{align*}
		\mathfrak{S}_2&=\sum_{k\notin E}\sum_{(k_1,k_2)\in Q_k}|b_{1,k_1}b_{2,k_2}||b_{3,k_1+k_2}|\\
		&\leq\|b_{1,k_1}\|_{\ell_2}\|b_{2,k_2}\|_{\ell_2}\left(\sum_{k\notin E}\sum_{(k_1,k_2)\in Q_k}|b_{3,k_1+k_2}|^2\right)^\frac{1}{2}\\
		&=\|b_{1,k_1}\|_{\ell_2}\|b_{2,k_2}\|_{\ell_2}\left(\sum_{k\notin E}|Q_k||b_{3,k}|^2\right)^\frac{1}{2}\leq|Q|^\frac{1}{4}\prod_{l=1}^3\|b_{l,k_l}\|_{\ell_2}.
	\end{align*}
\end{proof}
We now estimate $S_2$. We set $B=\{(k_1,k_2)\in\Z^{2d}:\;\frac{3}{8}\lambda\leq|k_1|,|k_2|\leq\lambda\}$ and $J_{k_1,k_2}^k=I_{k_1,k_2,k_1+k_2-k}$, for $k_1,k_2,k\in\Z^d$. For a fixed $k\in\Z^d\cap B(0,\lambda^{\epsilon_1})$ and $j\in\N$, we define the set $B_j^k$ as
\begin{align*}
	B_j^k=\{(k_1,k_2)\in B:\;2^{-j}\|J_{k_1,k_2}^k\|_{\ell_\infty((k_1,k_2)\in B)}<&|J_{k_1,k_2}^k|\leq2^{-j+1}\|J_{k_1,k_2}^k\|_{\ell_\infty((k_1,k_2)\in B)}\}.
\end{align*}
We note that $B=\cup_{j\in\N}B_j^k$ for a fixed $k$ and the following bound also holds true.
\begin{equation*}
	|B_j^k|\lesssim2^{2j}\|J_{k_1,k_2}^k\|_{\ell_2((k_1,k_2)\in B)}^2\|J_{k_1,k_2}^k\|_{\ell_\infty((k_1,k_2)\in B)}^{-2}.
\end{equation*}
We use the above estimate along with \Cref{lemma:counting} to obtain
\begin{align*}
	S_2&\leq\sum_{k\in\Z^d\cap B(0,\lambda^{\epsilon_1})}\sum_{(k_1,k_2)\in B}|I_{k_1,k_2,k_1+k_2-k}a_{1,k_1}a_{2,k_2}a_{3,k_1+k_2-k}|\\
	&\leq\sum_{k\in\Z^d\cap B(0,\lambda^{\epsilon_1})}\sum_{j\in\N}\sum_{(k_1,k_2)\in B_j^k}|J_{k_1,k_2}^ka_{1,k_1}a_{2,k_2}a_{3,k_1+k_2-k}|\\
	&\lesssim\sum_{k\in\Z^d\cap B(0,\lambda^{\epsilon_1})}\sum_{j\in\N}\sum_{(k_1,k_2)\in B_j^k}2^{-j}\|J_{k_1,k_2}^k\|_{\ell_\infty((k_1,k_2)\in B)}|a_{1,k_1}a_{2,k_2}a_{3,k_1+k_2-k}|\\
	&\lesssim\sum_{k\in\Z^d\cap B(0,\lambda^{\epsilon_1})}\sum_{j\in\N}2^{-j}|B_j^k|^\frac{1}{4}\|J_{k_1,k_2}^k\|_{\ell_\infty((k_1,k_2)\in B)}\prod_{l=1}^3\|a_{l,k_l}\|_{\ell_2}\\
	&\lesssim\sum_{k\in\Z^d\cap B(0,\lambda^{\epsilon_1})}\sum_{j\in\N}2^{-\frac{j}{2}}\|J_{k_1,k_2}^k\|^\frac{1}{2}_{\ell_\infty((k_1,k_2)\in B)}\|J_{k_1,k_2}^k\|^\frac{1}{2}_{\ell_2((k_1,k_2)\in B)}\prod_{l=1}^3\|a_{l,k_l}\|_{\ell_2}\\
	&\lesssim\lambda^{-\frac{1-\epsilon_2}{2}+\epsilon_1d}\|\Omega\|_\infty\|f_1\|_\infty\|f_2\|_\infty\|f_3\|_\infty,
\end{align*}
where we used \eqref{est:ell2} and \Cref{lemma:ellinfty} in the last step.
Choosing $\epsilon_1$ and $\epsilon_2$ sufficiently small concludes the proof of \eqref{est:infty^4}. Consequently, the estimate \eqref{inq:Trilinearsmoothing1} follows for the case when $\supp{\widehat{f_{1}}}\cup\supp{\widehat{f_{2}}}\subset\left\{\lambda\leq|\xi|\leq2\lambda\right\}$ and $\supp{\widehat{f_{3}}}\subset\left\{|\xi|\leq2\lambda\right\}$.

The proof of the estimate \eqref{inq:Trilinearsmoothing1} for the other symmetric cases follows by  minor modifications in the arguments used in the above proof. More precisely, we need to interchange the roles of $f_1,f_2,$ and $f_3$ accordingly and instead of using the single scale estimate \eqref{est:singlescale1}, we use the following estimates
\begin{align*}
	\left|\left\langle T^0_\Omega\Big(\widetilde\eta^2((\widetilde\eta f_1)*\psi_\lambda),\eta(f_2*\phi_{\lambda})\Big),\widetilde\eta^2((\widetilde\eta f_3)*\psi_\lambda)\right\rangle\right|&\lesssim\|\Omega\|_{\frac{q+1}{2}}\|\widetilde\eta f_1\|_{(q+1)'}\|f_2\|_{\infty}\|\widetilde\eta f_3\|_{(q+1)'},\\
	\left|\left\langle T^0_\Omega\Big(\eta(f_1*\phi_{\lambda}),\widetilde\eta^2((\widetilde\eta f_2)*\psi_\lambda)\Big),\widetilde\eta^2((\widetilde\eta f_3)*\psi_\lambda)\right\rangle\right|&\lesssim\|\Omega\|_{\frac{q+1}{2}}\|f_1\|_\infty\|\widetilde\eta f_2\|_{(q+1)'}\|\widetilde\eta f_3\|_{(q+1)'}.
\end{align*}
These estimates follow easily from \eqref{singlescale:2qbound1} and \eqref{singlescale:2qbound2}.
\qed
\section{Proof of \Cref{Thm:Rough}}\label{sec:proofrough}
%\subsection{Proof of \Cref{Thm:Rough}}
We begin by introducing a spatial decomposition of the kernel. Let $\beta\in C_c^\infty{(\R)}$ be a radial function such that $\supp{\beta}\subset\left(\frac{1}{2},2\right)$ and $\sum_{i\in\Z}\beta_i(t)=\sum_{i\in\Z}\beta(2^it)=1$. We define the operators $T^i_\Omega,\;i\in\Z$ corresponding to the kernels 
\[K^i(y_1,y_2)=\Omega\left(\frac{(y_1,y_2)}{|(y_1,y_2)|}\right){|(y_1,y_2)|^{-2d}}\beta_i(|(y_1,y_2)|),\]
as follows 
\[T^i_\Omega(f_1,f_2)(x)=\int_{\R^{2d}}K^i(y_1,y_2)f_1(x-y_1)f_2(x-y_2)\;dy_1dy_2.\]
We further decompose the operator based on Littlewood-Paley decomposition of the functions in the following manner.
Let $\phi\in\mathcal{S}(\R^d)$ be a radial function supported on the ball $B(0,2)$ such that $\widehat\phi(\xi)=1,\;\xi\in B(0,1)$ and $0\leq\widehat\phi\leq1$. Let $\widehat\psi(\xi)=\widehat\phi(\xi)-\widehat\phi(2\xi)$. For $k\in\Z$, we define the sequence of functions $\phi_k$ and $\psi_k$ as $\phi_k(x)=2^{kd}\phi(2^{k}x)$ and $\psi_k(x)=2^{kd}\psi(2^{k}x)$. We have the following smooth partition of the identity $\widehat{\phi_0}(\xi)+\sum_{k\in\N}\widehat{\psi_k}(\xi)=1$ for $\xi\neq0$. Thus, we have
\begin{align}
	T_\Omega(f_1,f_2)(x)=&\;T_{\Omega}^{LL}(f_1,f_2)(x)+\sum_{k\in\N}T_{\Omega,k}^{HL}(f_1,f_2)(x)+\sum_{k\in\N}T_{\Omega,k}^{LH}(f_1,f_2)(x)+\sum_{k\in\N}T_{\Omega,k}^{HH}(f_1,f_2)(x),
\end{align}
where 
	\begin{align}T_{\Omega}^{LL}(f_1,f_2)(x)=&\;\sum_{i\in\Z}T^i_\Omega(f_1*\phi_i,f_2*\phi_i)(x),\nonumber\\
	T_{\Omega,k}^{HL}(f_1,f_2)(x)=&\;\sum_{i\in\Z}T^i_\Omega(f_1*\psi_{i+k},f_2*\phi_{i+k-10})(x),\nonumber\\
	T_{\Omega,k}^{LH}(f_1,f_2)(x)=&\;\sum_{i\in\Z}T^i_\Omega(f_1*\phi_{i+k-10},f_2*\psi_{i+k})(x)~\;\text{and}\nonumber\\
	T_{\Omega,k}^{HH}(f_1,f_2)(x)=&\;\sum_{i\in\Z}\sum_{a=-10}^{10}T^i_\Omega(f_1*\psi_{i+k+a},f_2*\psi_{i+k})(x).\nonumber
\end{align}
\subsection*{Estimate of Low frequency term :} Note that we have 
\[T_{\Omega}^{LL}\left(f_1, f_2\right)(x)=K_{LL} *(f_1 \otimes f_2)(x) \text {, where }\]
\[K_{LL}(y, z)=\sum_{i \in \mathbb{Z}} K^i *\left(\phi_i \otimes \phi_i\right)(y, z).\]
The kernel $K_{LL}$ satisfies the following Fourier transform estimate.
\begin{lemma}For all multi-indices $\alpha\in(\N\cup\{0\})^{2d}$, the following estimate holds
	\[|\partial^\alpha\widehat{K_{LL}}(\xi_1,\xi_2)|\lesssim\|\Omega\|_1|(\xi_1,\xi_2)|^{-|\alpha|}.\]
\end{lemma}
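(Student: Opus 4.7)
The plan is to reduce the estimate to a scaled analysis of $\widehat{K^0}$ after passing to the Fourier side. Writing the Fourier transform of the tensor product and noting that convolution becomes multiplication, we have
\[
\widehat{K_{LL}}(\xi_1,\xi_2)=\sum_{i\in\Z}\widehat{K^i}(\xi_1,\xi_2)\,\widehat{\phi}(\xi_1/2^i)\,\widehat{\phi}(\xi_2/2^i).
\]
The first step is to observe the dilation identity $K^i(y_1,y_2)=2^{2i}K^0(2^iy_1,2^iy_2)$, which is immediate from the definitions of $\beta_i$ and of the homogeneous kernel, and which yields $\widehat{K^i}(\xi)=\widehat{K^0}(2^{-i}\xi)$. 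Since $\widehat\phi$ is supported in $B(0,2)$, the factor $\widehat{\phi}(\xi_1/2^i)\widehat{\phi}(\xi_2/2^i)$ vanishes unless $2^i\gtrsim |\xi|$, so the sum is effectively over $\{i:2^i\geq c|\xi|\}$ for an absolute constant $c>0$.

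Next I will record two pointwise bounds on $\widehat{K^0}$ and its derivatives. Because $K^0$ is supported in the annulus $\{1/2\leq |y|\leq 2\}$ with $\|K^0\|_1\lesssim \|\Omega\|_1$, every derivative of $\widehat{K^0}$ is a uniformly bounded smooth function: for every multi-index $\beta$,
\[
|\partial^\beta\widehat{K^0}(\eta)|\lesssim \int |y|^{|\beta|}|K^0(y)|\,dy\lesssim \|\Omega\|_1.
\]
Separately, the mean zero condition $\int_{\Sp^1}\Omega\,d\sigma=0$ together with the radial factor $r^{-1}\beta(r)$ implies $\int K^0(y)\,dy=0$, and hence by subtracting the constant $1$ inside the exponential,
\[
|\widehat{K^0}(\eta)|=\Bigl|\int K^0(y)(e^{-2\pi i y\cdot\eta}-1)\,dy\Bigr|\lesssim \|\Omega\|_1\,|\eta|\quad\text{for all }\eta\in\R^2.
\]

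With these estimates in hand, I will apply Leibniz's rule to $\partial^\alpha\widehat{K_{LL}}$. Each derivative falling on $\widehat{K^i}(\xi)=\widehat{K^0}(2^{-i}\xi)$ or on $\widehat\phi(\xi_j/2^i)$ produces a factor of $2^{-i}$, so the total multiplier from differentiation is $2^{-i|\alpha|}$, while the underlying function values are bounded by $\|\Omega\|_1$. Summing the geometric series over $i$ with $2^i\gtrsim |\xi|$ gives the desired bound $\|\Omega\|_1|\xi|^{-|\alpha|}$ whenever $|\alpha|\geq 1$. For the endpoint $\alpha=0$, where the geometric sum diverges, I will instead use the mean zero estimate $|\widehat{K^0}(2^{-i}\xi)|\lesssim \|\Omega\|_1\,2^{-i}|\xi|$ (valid since $|2^{-i}\xi|\lesssim 1$ on the effective summation range), and the sum $|\xi|\sum_{2^i\gtrsim|\xi|}2^{-i}$ is $O(1)$.

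The main technical point is to organize these two regimes cleanly and to ensure that all mixed Leibniz terms are controlled: derivatives falling on the cutoff factors $\widehat\phi(\xi_j/2^i)$ must be absorbed uniformly because $\widehat\phi\in\mathcal{S}(\R)$ has all derivatives bounded. Once this bookkeeping is in place, the argument is a routine geometric summation, and the mean zero hypothesis is used only at the case $|\alpha|=0$ to close the summation.
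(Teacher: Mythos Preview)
Your proof is correct and follows essentially the same route as the paper: the dilation identity $\widehat{K^i}(\xi)=\widehat{K^0}(2^{-i}\xi)$, the support of $\widehat\phi$ restricting the sum to $2^i\gtrsim|\xi|$, Leibniz's rule producing a factor $2^{-i|\alpha|}$, and geometric summation. The only cosmetic difference is that you split explicitly into the cases $|\alpha|\geq1$ and $\alpha=0$, invoking the mean zero of $\Omega$ only in the latter, whereas the paper states a single estimate $|\partial^{\alpha}\widehat{K^i}(\xi)|\lesssim\|\Omega\|_1 2^{-i|\alpha|}|2^{-i}\xi|$ uniformly in $\alpha$; your organization is arguably cleaner, since the extra factor $|2^{-i}\xi|$ is in fact only guaranteed for $\alpha=0$.
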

\begin{proof}
By the mean zero condition $\int_{\mathbb S^{2d-1}}\Omega=0$ and the mean value theorem, it is easy to verify that \[|\partial^\alpha\widehat{K^i}(\xi_1,\xi_2)|\lesssim\|\Omega\|_1|2^{-i|\alpha|}|2^{-i}(\xi_1,\xi_2)|.\] 
Thus we have
\begin{align*}
	|\partial^\alpha\widehat{K_{LL}}(\xi_1,\xi_2)|\lesssim&\;\sum_{i\in\Z}\sum_{|\alpha_1|+|\alpha_2|=|\alpha|}|\partial^{\alpha_1}\widehat{K^i}(\xi_1,\xi_2)||\partial^{\alpha_2}(\widehat{\phi_i}(\xi_1) \widehat{\phi_i}(\xi_2))|\\
	\lesssim&\;\|\Omega\|_1\sum_{i\in\Z}\sum_{|\alpha_1|+|\alpha_2|=|\alpha|}2^{-i|\alpha|}|2^{-i}(\xi_1,\xi_2)|2^{-i|\alpha_2|}\chi_{|2^{-i}(\xi_1,\xi_2)|\leq\sqrt{2}}(\xi_1,\xi_2)\\
	\lesssim&\;\|\Omega\|_1|(\xi_1,\xi_2)|^{-|\alpha|}\sum_{i\in\Z:|2^{-i}(\xi_1,\xi_2)|\leq\sqrt{2}}|2^{-i}(\xi_1,\xi_2)|\\
	\lesssim&\;\|\Omega\|_1|(\xi_1,\xi_2)|^{-|\alpha|}.
\end{align*}
The desired $L^p-$boundedness of the operator $T_{\Omega}^{LL}$ follows from the above lemma by applying the Coifman-Meyer multiplier theorem from \cite{CM1978}. We state their result below for convenience. The reader may also refer to \cite[Theorem 7.5.3]{GrafakosmodernFA} for a proof. 
\end{proof}
\begin{theorem}{\cite{CM1978}}
	Let $1<p_1, p_2<\infty,\;\frac{1}{2}< p<\infty$ with $\frac{1}{p_1}+\frac{1}{p_2}=\frac{1}{p}$. Let $m \in L^{\infty}\left(\mathbb{R}^{2 d}\right)$ satisfies
	\[\left|\partial^\alpha m\left(\xi_1, \xi_2\right)\right|\leq C\left|\left(\xi_1, \xi_2\right)\right|^{-|\alpha|},\left(\xi_1, \xi_2\right) \neq(0,0), \text { for }|\alpha| \leq 2 d.\]
	Then the multiplier operator $T_m$ defined as
	\[T_m\left(f_1, f_2\right)(x)=\int_{\mathbb{R}^{2 d}} m\left(\xi_1, \xi_2\right) \widehat{f}_1\left(\xi_1\right) \widehat{f}_2\left(\xi_2\right) e^{2 \pi i x \cdot\left(\xi_1+\xi_2\right)} d \xi_1 d \xi_2\]
	satisfies the following estimate
	\[\left\|T_m\right\|_{L^{p_1}\left(\mathbb{R}^d\right) \times L^{p_2}\left(\mathbb{R}^d\right) \rightarrow L^p\left(\mathbb{R}^d\right)} \lesssim C.\]
\end{theorem}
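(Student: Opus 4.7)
The plan is to prove this by dyadically decomposing the multiplier and reducing to paraproduct estimates. Let $\Phi \in C_c^\infty(\R^{2d})$ be a radial bump supported in $\{1/2 \leq |\xi| \leq 2\}$ with $\sum_{j \in \Z} \Phi(2^{-j}\xi) = 1$ for $\xi \neq 0$, and write $m = \sum_{j \in \Z} m_j$ where $m_j(\xi) := m(\xi)\Phi(2^{-j}\xi)$. The hypothesis on $m$ yields $|\partial^\alpha m_j(\xi)| \lesssim C\,2^{-j|\alpha|}$ for $|\alpha| \leq 2d$, with $m_j$ supported in the annulus $\{2^{j-1} \leq |(\xi_1,\xi_2)| \leq 2^{j+1}\}$. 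I then split $m_j = m_j^{HL} + m_j^{LH} + m_j^{HH}$ using smooth cutoffs that isolate the three regimes $|\xi_1| \gg |\xi_2|$, $|\xi_1| \ll |\xi_2|$, and $|\xi_1| \sim |\xi_2|$; each summand inherits the same derivative bounds on the same annular support.

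The second step is to expand each $m_j^\star$ as a Fourier series on a cube $Q_j = [-2^{j+C}, 2^{j+C}]^{2d}$ for a fixed large $C$, obtaining
\[
m_j^\star(\xi) = \sum_{n \in \Z^{2d}} c_{j,n}^\star \, e^{2\pi i n \cdot \xi / 2^{j+C+1}}, \qquad \star \in \{HL, LH, HH\}.
\]
Integrating by parts $2d$ times yields $|c_{j,n}^\star| \lesssim C (1+|n|)^{-2d-1}$, which is summable in $n$. This represents $T_{m_j^\star}$ as an absolutely summable superposition of translates of products $(\Delta_j^{(1)} f_1)(\Delta_j^{(2)} f_2)$, where $\Delta_j^{(i)}$ is a Littlewood--Paley projection whose Fourier support equals the $\xi_i$-projection of the support of $m_j^\star$. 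Summing in $j$ reduces the problem to bounding, uniformly in the translation parameter $n$, the three standard paraproducts
\[
\Pi^{HL}(f_1,f_2) = \sum_j (\Delta_j f_1)(S_j f_2), \quad \Pi^{LH}(f_1,f_2) = \sum_j (S_j f_1)(\Delta_j f_2), \quad \Pi^{HH}(f_1,f_2) = \sum_j (\Delta_j f_1)(\Delta_j f_2),
\]
where $S_j$ is a smooth low-frequency projection.

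For $\Pi^{HL}$ and $\Pi^{LH}$, each summand is Fourier-localized near scale $2^j$ in the output, so \Cref{lemma:GrafMedium} combined with H\"older's inequality and the vector-valued Hardy--Littlewood maximal and Littlewood--Paley inequalities gives the desired bound in the full range stated. The main obstacle is $\Pi^{HH}$, whose $j$-th summand has Fourier support in $\{|\xi| \lesssim 2^{j+1}\}$ rather than $\{|\xi| \sim 2^j\}$, so \Cref{lemma:GrafMedium} does not apply directly. The standard resolution is to reinsert a Littlewood--Paley projection on the output, writing $\Pi^{HH}(f_1,f_2) = \sum_k \widetilde\Delta_k \bigl(\sum_{j \geq k - O(1)} (\Delta_j f_1)(\Delta_j f_2)\bigr)$, and then applying the Fefferman--Stein vector-valued maximal inequality together with Cauchy--Schwarz on the inner sum. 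This is the step where the lower endpoint $p > 1/2$ becomes essential, as it requires both $p_1, p_2 > 1$ to close the vector-valued square function estimates; the assumption of derivatives up to order $2d$ is used precisely to guarantee the summability of the Fourier coefficients $c_{j,n}^\star$ in dimension $2d$.
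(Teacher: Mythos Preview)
The paper does not prove this theorem; it is quoted from \cite{CM1978} with an explicit pointer to \cite[Theorem~7.5.3]{GrafakosmodernFA} for a proof. Your outline is precisely the standard textbook argument given there: dyadic decomposition of $m$, a three-fold paraproduct splitting, Fourier series expansion of each rescaled piece, and reduction to Littlewood--Paley/square-function bounds. So in spirit you are reproducing exactly the proof the paper defers to.

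There is, however, one concrete slip. You claim that integrating by parts $2d$ times gives $|c_{j,n}^\star| \lesssim C(1+|n|)^{-2d-1}$. This is off by one: $2d$ integrations by parts (in the direction of the largest coordinate of $n$) yield only $(1+|n|)^{-2d}$, which is \emph{not} absolutely summable over $\Z^{2d}$. With the hypothesis $|\alpha|\le 2d$ exactly as stated, the pointwise coefficient bound alone does not close the argument. The usual repairs are either (i) to assume one more derivative, $|\alpha|\le 2d+1$, which is how many references state the theorem, or (ii) to observe that after rescaling each $m_j^\star$ lies in $W^{2d,2}$ of a fixed torus with uniformly bounded norm, and then use Cauchy--Schwarz, $\sum_n|c_n|\le \bigl(\sum_n(1+|n|)^{4d}|c_n|^2\bigr)^{1/2}\bigl(\sum_n(1+|n|)^{-4d}\bigr)^{1/2}$, which converges since $4d>2d$. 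Either fix is minor, but as written your summability step is not justified.

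A smaller remark: for the diagonal piece $\Pi^{HH}$ you do not need the extra output-side Littlewood--Paley decomposition you describe. A direct pointwise Cauchy--Schwarz, $\bigl|\sum_j(\Delta_jf_1)(\Delta_jf_2)\bigr|\le Sf_1\cdot Sf_2$, followed by H\"older and the scalar Littlewood--Paley inequality, already gives the bound for all $p_1,p_2>1$, $p>1/2$. Your route also works, but is more elaborate than necessary.
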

\subsection*{Estimates for Medium and High frequencies terms:} We follow the following strategy to obtain desired bounds for the operators corresponding to medium and high frequencies terms. 
\begin{enumerate}
	\item For H\"older exponents $(p_1,p_2,p)$ in the Banach range, we prove a $(p_1,p_2,p)-$bound (see \Cref{lemma:p1p2pGrowth}) with an admissible growth in $k\in\N$ by employing the $L^p-$estimates of shifted square functions. 
	\item This growth estimate is further upgraded to a weak type bound at endpoints (see \Cref{lemma:endpointCZ}) lying in the non-Banach region by employing the Calder\'on-Zygmund analysis. 
	\item The crucial decay estimates (see \Cref{lemma:221}) for the operators $T_{\Omega,k}^{HL},\;T_{\Omega,k}^{LH}$, and $T_{\Omega,k}^{HH}$ at the points $(2,2,1),\;(2,\infty,2)$, and $(\infty,2,2)$ respectively are obtained by using our main result \Cref{thm:Smoothing}. 
	\item Finally, we use interpolation argument between the estimates proved in  \Cref{lemma:p1p2pGrowth}, \Cref{lemma:endpointCZ} and \Cref{lemma:221} to  prove a decay estimate for the exponents $(p_1,p_2,p)$ in the required range. With this decay estimate we  conclude \Cref{Thm:Rough} by summing in $k\in\N$. 
\end{enumerate}
We now state the results described as above. 
\begin{lemma}\label{lemma:p1p2pGrowth}
	For all $k\in\N$ and $1\leq p_1,p_2,p<\infty$ with $\frac{1}{p_1}+\frac{1}{p_2}=\frac{1}{p}$, we have 
	\begin{align*}
		\|T_{\Omega,k}^{HH}(f_1,f_2)\|_p\lesssim&\; |k|^{\left|\frac{1}{p_1}-\frac{1}{2}\right|+\left|\frac{1}{p_2}-\frac{1}{2}\right|}\|\Omega\|_1\|f_1\|_{p_1}\|f_2\|_{p_2},\\
		\|T_{\Omega,k}^{HL}(f_1,f_2)\|_p\lesssim&\; |k|^{\left|\frac{1}{p_1}-\frac{1}{2}\right|+\left|\frac{1}{p'}-\frac{1}{2}\right|}\|\Omega\|_1\|f_1\|_{p_1}\|f_2\|_{p_2},\\
		\|T_{\Omega,k}^{LH}(f_1,f_2)\|_p\lesssim&\; |k|^{\left|\frac{1}{p'}-\frac{1}{2}\right|+\left|\frac{1}{p_2}-\frac{1}{2}\right|}\|\Omega\|_1\|f_1\|_{p_1}\|f_2\|_{p_2}.
	\end{align*}
\end{lemma}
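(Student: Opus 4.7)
By symmetry it suffices to treat $T_{\Omega,k}^{HH}$; the HL and LH cases are handled by the same method upon interchanging the roles of high- and low-frequency factors, using that the ``low'' factor admits a trivial pointwise maximal control. The overall plan closely mirrors \cite{DosidisSlavikova}: reduce the $L^p$-estimate to a square-function inequality and bound the result by a product of shifted Littlewood-Paley square functions, whose sharp $L^p$-boundedness is the source of the claimed $|k|$-growth.

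\emph{Step 1 (Fourier localization).} Writing $\Delta_j$ for the Littlewood-Paley projection against $\psi_j$, decompose
\[T_{\Omega,k}^{HH}(f_1,f_2)=\sum_{a=-10}^{10}\sum_{i\in\Z}T^i_\Omega(\Delta_{i+k+a}f_1,\Delta_{i+k}f_2).\]
For $a\ne 0$, each summand has Fourier support in an annulus of radius $\sim 2^{i+k+\max(a,0)}$. For $a=0$, I insert an auxiliary output Littlewood-Paley decomposition $T^i_\Omega(\Delta_{i+k}f_1,\Delta_{i+k}f_2)=\sum_{s\geq -2}\Delta_{i+k-s}T^i_\Omega(\Delta_{i+k}f_1,\Delta_{i+k}f_2)$, so that each piece becomes annulus-localized at scale $2^{i+k-s}$, with the extra parameter $s$ to be summed at the end.

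\emph{Step 2 (square-function reduction).} For each fixed $a$ (and $s$), the summands in $i$ are Fourier-localized in essentially disjoint dyadic annuli, so \Cref{lemma:GrafMedium} gives
\[\Big\|\sum_{i\in\Z}T^i_\Omega(\Delta_{i+k+a}f_1,\Delta_{i+k}f_2)\Big\|_p\lesssim\Big\|\Big(\sum_{i\in\Z}|T^i_\Omega(\Delta_{i+k+a}f_1,\Delta_{i+k}f_2)|^2\Big)^{1/2}\Big\|_p.\]

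\emph{Step 3 (pointwise bound).} Since $K^i$ is supported on $|(y_1,y_2)|\sim 2^{-i}$ and bounded there by $2^{2i}|\Omega(\cdot/|\cdot|)|$, Cauchy-Schwarz in the kernel integral together with the Fefferman-Stein vector-valued maximal inequality yields the pointwise control
\[\Big(\sum_{i\in\Z}|T^i_\Omega(\Delta_{i+k+a}f_1,\Delta_{i+k}f_2)(x)|^2\Big)^{1/2}\lesssim \|\Omega\|_1\, S^{(k+a)}f_1(x)\, S^{(k)}f_2(x),\]
where $S^{(\sigma)}g:=\bigl(\sum_j|\Delta_{j+\sigma}g|^2\bigr)^{1/2}$ is the shifted Littlewood-Paley square function.

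\emph{Step 4 (shifted square function bound and assembly).} Invoke the $L^r$-estimate $\|S^{(\sigma)}g\|_r\lesssim(1+|\sigma|)^{|\frac{1}{r}-\frac{1}{2}|}\|g\|_r$ for $1\leq r<\infty$, obtained by interpolating between Plancherel at $r=2$ (no growth) and an endpoint bound with at most linear growth in $|\sigma|$. Applying H\"older with exponents $(p_1,p_2)$ and summing over the bounded range of $a$, together with the geometrically decaying tail in $s$ for the $a=0$ piece, yields the claimed bound with exponent $|\tfrac{1}{p_1}-\tfrac{1}{2}|+|\tfrac{1}{p_2}-\tfrac{1}{2}|$.

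\emph{Main obstacle.} The bulk of the technical work lies in the $a=0$ ``diagonal'' subcase, where direct annulus-localization of the output fails and the summation in the auxiliary parameter $s$ must be controlled: this relies on harvesting decay from $\widehat{K^i}$ at low output frequencies relative to the input scale, with acceptable constants uniform in $i$ and $k$. A secondary, more routine, subtlety is to establish the claimed shifted square function growth cleanly across the full Banach range $1\leq r<\infty$, which I would do by interpolation from the $L^2$-identity against the crude endpoint bound.
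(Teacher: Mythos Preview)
There is a genuine gap in Step~3. Your object $S^{(\sigma)}g=(\sum_j|\Delta_{j+\sigma}g|^2)^{1/2}$ is merely a re-indexing of the ordinary Littlewood--Paley square function and does not depend on $\sigma$ at all; the bound you quote in Step~4 is then vacuously true with no growth, and no $|k|$-power can emerge from it. This is not the ``shifted square function'' that governs the lemma. The relevant shift is in \emph{physical space}: $K^i$ is supported where $|y_j|\sim 2^{-i}$, so $T^i_\Omega$ evaluates $\Delta_{i+k}f_j$ at points displaced from $x$ by $\sim 2^{-i}$, which is $2^k$ times the natural spatial scale $2^{-(i+k)}$ of $\Delta_{i+k}f_j$. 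Any attempt to absorb such a displacement pointwise via Fefferman--Stein or Peetre-type maximal control costs a factor $\sim 2^{k}$ rather than a logarithm, so the pointwise inequality you assert cannot hold with the constants you need.

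The paper's argument avoids pointwise control altogether. After the change of variable $y\mapsto 2^{-i}y$ one applies Minkowski's integral inequality (valid since $p\geq 1$) to bring the $L^p$-norm inside the $y$-integral; for each fixed $y$ with $|y_1|,|y_2|\leq 2$, Cauchy--Schwarz in $i$ followed by H\"older in $x$ produces $\|S_\psi^{2^k y_1}f_1\|_{p_1}\|S_\psi^{2^k y_2}f_2\|_{p_2}$, where $S_\psi^\sigma f(x)=(\sum_i|f*\psi_i(x-2^{-i}\sigma)|^2)^{1/2}$ is the \emph{spatially} shifted square function. Park's sharp estimate $\|S_\psi^\sigma f\|_r\lesssim\log(e+|\sigma|)^{|1/r-1/2|}\|f\|_r$ with $|\sigma|\sim 2^k$ then yields the stated $|k|$-power after integrating in $y$. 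All values of $a$ are handled at once and no output Littlewood--Paley decomposition is needed, so your ``main obstacle'' (the $a=0$ piece and the auxiliary sum in $s$, for which there is in any case no evident source of decay in $\widehat{K^i}$) simply does not arise. For $T_{\Omega,k}^{HL}$ the paper dualizes, inserts a projection $\widetilde\psi_{i+k}$ on $f_3$ via the output Fourier support, and runs the same argument with shifted square functions on $f_1$ and $f_3$ and the Hardy--Littlewood maximal function on the low-frequency factor $f_2$.
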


\begin{lemma}\label{lemma:endpointCZ}
	For all $k\in\N$, $1< q\leq\infty$, $\frac{1}{2}\leq p<1$ with $\frac{1}{p}+\frac{1}{q}=2$, there exists $c>0$ (depending on $q,p$) such that for $I\in\{HL,LH,HH\}$, we have
	\begin{align*}
		\|T_{\Omega,k}^{I}(f_1,f_2)\|_{L^{p,\infty}(\R^d)}&\lesssim |k|^c\|\Omega\|_q\|f_1\|_{{L^{1}(\R^d)}}\|f_2\|_{{L^{\frac{p}{1-p}}(\R^d)}},\\
		\|T_{\Omega,k}^{I}(f_1,f_2)\|_{L^{p,\infty}(\R^d)}&\lesssim |k|^c\|\Omega\|_q\|f_1\|_{{L^{\frac{p}{1-p}}(\R^d)}}\|f_2\|_{{L^{1}(\R^d)}}.
	\end{align*}
\end{lemma}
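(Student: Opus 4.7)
By symmetry it suffices to establish the first inequality; the second follows after interchanging the roles of $f_1$ and $f_2$. Fix $1<q\le\infty$ and $I\in\{HL,LH,HH\}$, set $p=q/(2q-1)$ so that $\tfrac{p}{1-p}=q'$, and normalize $\|\Omega\|_q=\|f_1\|_1=\|f_2\|_{q'}=1$. We aim to show
\[
|\{x\in\R : |T_{\Omega,k}^I(f_1,f_2)(x)|>\alpha\}| \lesssim \alpha^{-p}|k|^{cp}, \qquad \alpha>0,
\]
for some $c>0$. The plan is to run the bilinear Calder\'on--Zygmund machinery on the Hardy boundary $\tfrac1p+\tfrac1q=2$ as in \cite{HP2023,ChristZhou}, with careful book-keeping of the $k$-dependence.

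\textbf{CZ decomposition and good part.} At a height $\gamma$ (a power of $\alpha$ to be chosen), decompose $f_1=g_1+\sum_Q b_Q$ with $\|g_1\|_\infty\le\gamma$, $\|g_1\|_1\le 1$, and each $b_Q$ supported on a dyadic cube $Q$ with $\int b_Q=0$, $\|b_Q\|_1\lesssim\gamma|Q|$, $\sum_Q|Q|\lesssim\gamma^{-1}$. Set $\mathcal E=\bigcup_Q 4Q$, so $|\mathcal E|\lesssim\gamma^{-1}$. Select a Banach triple $(r_1,q',r)$ with $r_1$ slightly larger than $q$ and $r=r_1q'/(r_1+q')$ slightly larger than $1$. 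Applying Lemma~\ref{lemma:p1p2pGrowth} (and using $\|\Omega\|_1\lesssim\|\Omega\|_q$) yields
\[
\|T_{\Omega,k}^I(g_1,f_2)\|_r \lesssim |k|^{c_1}\|g_1\|_{r_1}\|f_2\|_{q'} \lesssim |k|^{c_1}\gamma^{1-1/r_1},
\]
and Chebyshev controls the contribution of $g_1$ to the level set at height $\alpha$.

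\textbf{Bad part.} Write $T_{\Omega,k}^I(b_1,f_2) = \sum_Q\sum_i T_\Omega^i(b_Q*L^1_{i+k}, f_2*L^2_{i+k})$ with $L^1,L^2\in\{\phi,\psi\}$ dictated by $I$. For a cube $Q$ of sidelength $\ell(Q)=2^{-j_Q}$, split the sum at $i+k=j_Q$. When $i+k\le j_Q$, Taylor expansion of $L^1_{i+k}$ about $c_Q$ combined with $\int b_Q=0$ yields the pointwise bound $|b_Q*L^1_{i+k}(z)|\lesssim 2^{2(i+k)-j_Q}\gamma$ multiplied by a Schwartz tail at scale $2^{-(i+k)}$ around $c_Q$; the cancellation gain $2^{i+k-j_Q}\le 1$ is used to sum. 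When $i+k>j_Q$, no cancellation is available, but $b_Q*L^1_{i+k}$ is effectively supported on a small enlargement of $Q$, and the annular support of $K^i$ at scale $2^{-i}$ forces $x\notin 4Q$ to satisfy $|x-c_Q|\lesssim 2^{-i}+\ell(Q)$, hence $i\le j_Q$; thus only $O(k)$ scales $i\in(j_Q-k,j_Q]$ contribute per cube. Combining the two regimes with the single-scale estimates \eqref{singlescale:2qbound1}--\eqref{singlescale:2qbound2}, one obtains $\|\mathbf 1_{\mathcal E^c}T_{\Omega,k}^I(b_1,f_2)\|_s \lesssim |k|^{c_2}\gamma^{1-1/s}$ for some $s>1$. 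This step is the main obstacle: the roughness of $\Omega$ precludes pointwise H\"ormander-type kernel estimates, so the cancellation must be extracted from the Littlewood--Paley smoothing built into $T_{\Omega,k}^I$, and the $O(k)$ non-cancelling scales must be summed without exceeding polynomial $k$-growth.

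\textbf{Optimization.} The three contributions combine to
\[
|\{|T_{\Omega,k}^I(f_1,f_2)|>\alpha\}| \lesssim \gamma^{-1} + \alpha^{-r}|k|^{c_1r}\gamma^{r-r/r_1} + \alpha^{-s}|k|^{c_2s}\gamma^{s-s/s'}.
\]
Setting $\gamma=\alpha^{p}$ makes the first term $\lesssim\alpha^{-p}$, and the choice of $r_1$ slightly above $q$ together with a suitable $s>1$ renders the remaining two summands $\lesssim\alpha^{-p}|k|^{cp}$ for some $c>0$, yielding the claimed weak-type bound.
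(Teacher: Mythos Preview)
Your overall strategy---Calder\'on--Zygmund on the $L^1$ input, Lemma~\ref{lemma:p1p2pGrowth} for the good part, cancellation of $b_Q$ extracted through the Littlewood--Paley convolution---matches the paper's, but the bad-part treatment has a genuine gap. The single-scale bounds \eqref{singlescale:2qbound1}--\eqref{singlescale:2qbound2} you invoke place $f_2$ in $L^\infty$ or $L^{(2q)'}$, not $L^{q'}$, so it is not clear how you ever recover $\|f_2\|_{q'}$ on the right-hand side. And even granting an estimate $\|\mathbf 1_{\mathcal E^c}T_{\Omega,k}^I(b_1,f_2)\|_s\lesssim|k|^{c_2}\gamma^{1-1/s}$ for some $s>1$, Chebyshev with $\gamma=\alpha^p$ produces level-set size $\alpha^{-p-(s-1)(1-p)}$, which does not equal $\alpha^{-p}$ and therefore fails for small $\alpha$; the final optimization cannot close.

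The paper avoids both issues by handling the bad part \emph{pointwise} rather than in $L^s$. After the change of variable $y\mapsto 2^{-i}y$ one bounds $|f_2*\psi_{k+i}(x-2^{-i}y_2)|\le Mf_2(x-2^{-i}y_2)$ and applies H\"older in $y_2$ with exponents $(q,q')$ against $|K^0(y_1,y_2)|$; this peels off the factor $M_{q'}Mf_2(x)$ and splits the level set into $\{M_{q'}Mf_2>\lambda^{1-p}\}$ (size $\lesssim\lambda^{-p}$ since $(1-p)q'=p$) and a set where the remaining factor exceeds $\lambda^p$. That remaining factor depends only on $\sum_{i,s}\|B^s*\psi_{k+i}^s\|_1$, where for $s<i$ the fact that $x\in\mathcal E^c$ lets one replace $\psi_{k+i}$ by its tail $\chi_{|\cdot|>2^{-s}}\psi_{k+i}$; Lemma~\ref{lemma:badterm} then yields two-sided geometric decay in $i-s$, summing with \emph{no} residual power of $k$ in the bad term. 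Note also that your scheme breaks down at $q=\infty$, since then $q'=1$ and no Banach triple $(r_1,1,r)$ with $r>1$ exists; the paper treats this endpoint separately by running the Calder\'on--Zygmund decomposition on \emph{both} $f_1$ and $f_2$ and estimating the doubly bad term $I_{bb}$ via the single-scale bound \eqref{singlescale:Lpbound} at $(1,1,\tfrac12)$, which is where the $|k|^2$ growth actually appears.
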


\begin{lemma}\label{lemma:221}
	For all $k\in\N$, $1<q\leq\infty$, there exists $c>0$ such that we have 
	\begin{align*}
		\|T_{\Omega,k}^{HH}(f_1,f_2)\|_1&\lesssim 2^{-ck}\|\Omega\|_q\|f_1\|_2\|f_2\|_2,\\
		\|T_{\Omega,k}^{LH}(f_1,f_2)\|_2&\lesssim 2^{-ck}\|\Omega\|_q\|f_1\|_\infty\|f_2\|_2, \\
		\|T_{\Omega,k}^{HL}(f_1,f_2)\|_2&\lesssim 2^{-ck}\|\Omega\|_q\|f_1\|_2\|f_2\|_\infty.
	\end{align*}
\end{lemma}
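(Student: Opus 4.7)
The plan is to derive the three single-scale decay estimates from Theorem \ref{thm:Smoothing} by (i) unwinding the dyadic dilation built into $T^i_\Omega$, (ii) applying the trilinear smoothing at the unit scale to the rescaled pairing, and (iii) summing in $i$ by a Littlewood--Paley type orthogonality argument in the frequency of the dual function.

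The rescaling is the elementary identity $T^i_\Omega(F_1,F_2)(x)=T^0_\Omega(F_1(2^{-i}\cdot),F_2(2^{-i}\cdot))(2^i x)$, which gives
\[\langle T^i_\Omega(F_1,F_2),F_3\rangle=2^{-i}\langle T^0_\Omega(G_1,G_2),G_3\rangle,\qquad G_j(\cdot):=F_j(2^{-i}\cdot).\]
If $\widehat{F_j}$ is supported in $\{|\xi|\sim 2^{i+k}\}$ then $\widehat{G_j}$ is supported in $\{|\xi|\sim 2^{k}\}$, with $\|G_j\|_{L^2}=2^{i/2}\|F_j\|_{L^2}$ and $\|G_j\|_{L^\infty}=\|F_j\|_{L^\infty}$. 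Applying Theorem \ref{thm:Smoothing} with $\lambda\asymp 2^{k}$ to $\langle T^0_\Omega(G_1,G_2),G_3\rangle$, the two factors $2^{i/2}$ arising from the $L^2$ inputs absorb exactly the Jacobian $2^{-i}$ in front, producing the clean single-scale bound
\[|\langle T^i_\Omega(F_1,F_2),F_3\rangle|\lesssim 2^{-ck}\|\Omega\|_{L^q}\|F_{l_1}\|_{L^2}\|F_{l_2}\|_{L^2}\|F_{l_3}\|_{L^\infty},\]
provided the Fourier supports of the dilated $G_j$ match the hypothesis of Theorem \ref{thm:Smoothing} (a harmless further dyadic subdivision allows one to squeeze the inputs $f_1*\psi_{i+k+a}$ with $a\in\{-10,\ldots,10\}$ in the $HH$ case into the required factor-of-$2$ annulus).

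To sum in $i$ I would dualise and Fourier-localise the dual function at scale $2^{i+k}$ using the Fourier support of the output. In the $LH$ case the product $T^i_\Omega(f_1*\phi_{i+k-10},f_2*\psi_{i+k})$ has Fourier support in an annulus $\{|\xi|\sim 2^{i+k}\}$ (one input is Fourier supported in a ball of radius $\lesssim 2^{i+k-9}$, the other in an annulus of radius $\sim 2^{i+k}$), so pairing with $h\in L^2$ I may replace $h$ by $h*\tilde\psi_{i+k}$ for a fixed annular bump $\tilde\psi$ without changing the pairing, apply the single-scale estimate above with $F_{l_3}=h*\tilde\psi_{i+k}$ in the $L^2$ slot, and then invoke Cauchy--Schwarz in $i$ together with Plancherel,
\[\sum_i\|f_2*\psi_{i+k}\|_{L^2}\|h*\tilde\psi_{i+k}\|_{L^2}\lesssim\|f_2\|_{L^2}\|h\|_{L^2},\]
to conclude the $\infty\times 2\to 2$ bound; the $HL$ case is entirely symmetric. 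In the $HH$ case the output is only known to have Fourier support in a ball $\{|\xi|\lesssim 2^{i+k}\}$, so I dualise against $h\in L^\infty$ and replace $h$ by $h*\tilde\phi_{i+k}$ for a ball-type bump $\tilde\phi$, leaving $\|h*\tilde\phi_{i+k}\|_{L^\infty}\lesssim\|h\|_{L^\infty}$ uniformly in $i$; Cauchy--Schwarz in $i$ followed by Plancherel on the two $f_j$-inputs then yields $\sum_i \|f_1*\psi_{i+k+a}\|_{L^2}\|f_2*\psi_{i+k}\|_{L^2}\lesssim \|f_1\|_{L^2}\|f_2\|_{L^2}$, and hence the $(2,2,1)$-bound.

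The main obstacle is the careful bookkeeping of Fourier supports at each step: one must verify that the rescaled inputs fit the precise hypothesis of Theorem \ref{thm:Smoothing} (possibly after a harmless dyadic subdivision for the $a$-sum), and that the choice of Fourier-localisation of the dual function $h$ is correctly dictated by the annular vs.\ ball nature of the output Fourier support in each of the three subcases. Once this is in place, the rescaling accountancy is exact and the decay $2^{-ck}$ survives unchanged.
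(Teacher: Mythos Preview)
Your proposal is correct and follows essentially the same approach as the paper's own proof. Both arguments reduce to the single-scale trilinear smoothing estimate of Theorem~\ref{thm:Smoothing} (the paper applies it to $T^i_\Omega$ directly, leaving implicit the dilation identity you spell out), then sum in $i$ via Littlewood--Paley orthogonality: for $I=HH$ the paper takes the $L^1$ norm termwise and uses Cauchy--Schwarz on the $\ell^2$-sums of $\|f_j*\widetilde\psi_{i+k}\|_2$, which is exactly your duality argument with $h\in L^\infty$; for $I\in\{LH,HL\}$ the paper invokes Lemma~\ref{lemma:GrafMedium} on the annular-supported pieces and then Plancherel, which at $p=2$ is equivalent to your duality-plus-Cauchy--Schwarz step with $h*\widetilde\psi_{i+k}$.
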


\begin{lemma}\label{lemma:p1p2pDecay}
	For all $k\in\N$, $1<q\leq\infty$, $1< p_1,p_2<\infty$, $\frac{1}{2}<p<\infty$ with $\frac{1}{p_1}+\frac{1}{p_2}=\frac{1}{p}$ and $\frac{1}{p}+\frac{1}{q}<2$, there exists $c>0$ (depending on $q,p_1,p_2,p$) such that  
	\[\|T_{\Omega,k}^{I}(f_1,f_2)\|_p\lesssim 2^{-ck}\|\Omega\|_q\|f_1\|_{p_1}\|f_2\|_{p_2},\quad\text{where}\;I\in\{HL,LH,HH\}.\]
\end{lemma}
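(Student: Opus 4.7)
The plan is to deduce this decay estimate by multilinear interpolation among the three families of bounds already established: the polynomial growth estimate of \Cref{lemma:p1p2pGrowth} in the Banach range, the polynomial growth weak-type endpoint estimate of \Cref{lemma:endpointCZ} on the boundary $\tfrac{1}{p}+\tfrac{1}{q}=2$, and the exponential decay estimate of \Cref{lemma:221} at the distinguished point (which is $(2,2,1)$ for $T_{\Omega,k}^{HH}$, $(2,\infty,2)$ for $T_{\Omega,k}^{HL}$, and $(\infty,2,2)$ for $T_{\Omega,k}^{LH}$). The key observation is that if we can express the target tuple $(1/p_1,1/p_2,1/p)$ as a nontrivial convex combination $\theta_0 v_{\text{decay}} + \theta_1 v_1 + \theta_2 v_2$ with $\theta_0>0$, where $v_{\text{decay}}$ carries a bound $2^{-ck}$ and $v_1,v_2$ carry bounds polynomial in $k$, then the resulting interpolated constant behaves like $2^{-ck\theta_0}\cdot k^{C(1-\theta_0)}$, which is still bounded by $2^{-c'k}$ for some $c'>0$ independent of $k$.

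I would apply the trilinear real interpolation theorem (\Cref{realinterpolation}) separately to each of the three operators $T_{\Omega,k}^{HH}$, $T_{\Omega,k}^{HL}$, $T_{\Omega,k}^{LH}$. Fix for concreteness $T_{\Omega,k}^{HH}$ with decay point $(1/2,1/2,1)$ in the $(1/p_1,1/p_2,1/p)$-diagram. Given any target $(1/p_1,1/p_2,1/p)$ in the open region $\mathcal{H}^q$, I would choose two auxiliary points:  one in the open Banach region (supplied by \Cref{lemma:p1p2pGrowth}, picking, say, a point close to $(1/p_1,1/p_2,1/p)$ when that point is already Banach, or close to a corner otherwise), and one on the Hardy boundary $\frac{1}{r}+\frac{1}{q}=2$ (supplied by \Cref{lemma:endpointCZ}). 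Because $\mathcal{H}^q$ is the relative interior of the triangle formed by the three extreme lines $1/p_1\le 1$, $1/p_2\le 1$, and $1/p+1/q<2$, I can arrange so that $(1/p_1,1/p_2,1/p)$ lies in the interior of the triangle spanned by these three vertices, with a strictly positive barycentric weight $\theta_0$ on the decay vertex.

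Once the triangle is chosen, the three hypotheses of \Cref{realinterpolation} are met and its conclusion gives a strong-type bound at the target exponent with constant $M_0^{\theta_0}M_1^{\theta_1}M_2^{\theta_2}$. Substituting $M_0\lesssim 2^{-ck}$ from \Cref{lemma:221} and $M_1,M_2\lesssim k^{C}$ from \Cref{lemma:p1p2pGrowth} and \Cref{lemma:endpointCZ}, the resulting constant is
\[
	M_0^{\theta_0}M_1^{\theta_1}M_2^{\theta_2}\lesssim 2^{-ck\theta_0}k^{C(\theta_1+\theta_2)}\lesssim 2^{-c'k},
\]
for some $c'>0$ depending on $p_1,p_2,p,q$, which is the asserted bound. (The condition that the convex hull of the three vertices forms a nontrivial triangle in $\R^2$ ensures that the strong type, not just weak type, part of \Cref{realinterpolation} applies.) Exactly the same argument works for $T_{\Omega,k}^{HL}$ and $T_{\Omega,k}^{LH}$ upon replacing the decay vertex by $(1/2,0,1/2)$ and $(0,1/2,1/2)$ respectively.

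The main obstacle is purely geometric: verifying that for every $(p_1,p_2,p)\in\mathcal{H}^q$ with $1<p_1,p_2<\infty$, one can indeed realize $(1/p_1,1/p_2,1/p)$ as a strict convex combination of the chosen decay vertex together with two admissible auxiliary points (one Banach, one on the $\tfrac{1}{p}+\tfrac{1}{q}=2$ boundary) with $\theta_0>0$. This is a case-analysis depending on whether the target is itself Banach or non-Banach, and on which side of the decay vertex it lies; in each case a concrete choice of auxiliary vertices near the appropriate corners of $\mathcal{H}^q$ produces the required triangle and ensures the positivity of $\theta_0$, after which the decay-versus-growth arithmetic above yields the conclusion.
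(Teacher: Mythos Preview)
Your proposal is correct and follows precisely the approach outlined in the paper: the paper explicitly states (in step (4) of the strategy preceding the statement of the four lemmas) that \Cref{lemma:p1p2pDecay} is obtained by interpolating the estimates of \Cref{lemma:p1p2pGrowth}, \Cref{lemma:endpointCZ}, and \Cref{lemma:221}, and then summing in $k$. Your use of \Cref{realinterpolation} with the decay vertex from \Cref{lemma:221}, a Banach vertex from \Cref{lemma:p1p2pGrowth}, and a weak-type boundary vertex from \Cref{lemma:endpointCZ}, together with the arithmetic $2^{-ck\theta_0}k^{C(1-\theta_0)}\lesssim 2^{-c'k}$, is exactly what the paper has in mind; the paper does not spell out the geometric case analysis either.
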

\subsection{Proof of \Cref{lemma:p1p2pGrowth}}
The proof relies on the sharp $L^p-$bounds for the shifted square function. For $\sigma\in\R$, and $i\in\Z$, we set $\psi_i^\sigma(x)=2^{id}\psi(2^ix-\sigma)$. Consider the shifted square function defined by 
\begin{equation*}
		S_{\psi}^{\sigma}f(x)=\left(\sum_{i\in\Z}|f*\psi_i^\sigma(x)|^2\right)^\frac{1}{2}.
	\end{equation*}
	The following sharp $L^r-$estimates for $1<r<\infty$ with optimal growth on the shift $\sigma$ for $S_{\psi}^{\sigma}f$ hold
	\begin{align}
		\|S_{\psi}^{\sigma}f\|_r\lesssim&\;\log(e+|\sigma|)^{\left|\frac{1}{r}-\frac{1}{2}\right|}\|f\|_r,\label{shiftsquare}
	\end{align}
	Further, we require the $L^r-$estimates for the shifted maximal function. 
\begin{align}
	\|\sup\limits_{i\in\Z}|f*\phi_i(x-2^{-i}\sigma)|\|_r\lesssim&\;\log(e+|\sigma|)^\frac{1}{r}\|f\|_r.\label{shiftmaximal}
\end{align}
The boundedness for the shifted square function was studied in \cite{Muscalu} and \cite{SteinHarmonicAnalysisRealVariableMethodOrtho} but the sharp dependence was obtained in \cite{Park2024}.
We refer to \cite[Theorems 1.6 and 1.3]{Park2024} for the proofs of sharp dependence on the shift for the shifted square and maximal function estimates \eqref{shiftsquare} and \eqref{shiftmaximal} respectively. 

We now prove the required estimate for the high frequency term $T_{\Omega,k}^{HH}$. By a change of variable argument followed by Cauchy-Schwarz, Minkowski's and H\"older's inequalities, we have that 
	\begin{align*}
		&\;\|T_{\Omega,k}^{HH}(f_1,f_2)\|_p\\
		\lesssim&\;\sum_{a=-10}^{10}\left\|\int_{\R^{2d}}\Omega\left(\frac{(y_1,y_2)}{|(y_1,y_2)|}\right)|(y_1,y_2)|^{-2d}\beta(|(y_1,y_2)|)\sum_{i\in\Z}f_1*\psi_{i+k+a}(x-2^{-i}y_1)f_2*\psi_{i+k}(x-2^{-i}y_2)dy_1dy_2\right\|_p\\
		\lesssim&\;\sum_{a=-10}^{10}\int_{\R^{2d}}\left|\Omega\left(\frac{(y_1,y_2)}{|(y_1,y_2)|}\right)\right||(y_1,y_2)|^{-2d}|\beta(|(y_1,y_2)|)|\\
		&\hspace{3cm}\left\|\left(\sum_{i\in\Z}|f_1*\psi_{i+k+a}(x-2^{-i}y_1)|^2\right)^\frac{1}{2}\right\|_{p_1}\left\|\left(\sum_{i\in\Z}|f_2*\psi_{i+k}(x-2^{-i}y_2)|^2\right)^\frac{1}{2}\right\|_{p_2}dy_1dy_2\\
		\lesssim&\;\sum_{a=-10}^{10}\int_{\R^{2d}}\left|\Omega\left(\frac{(y_1,y_2)}{|(y_1,y_2)|}\right)\right||(y_1,y_2)|^{-2d}|\beta(|(y_1,y_2)|)|\left\|S_{\psi_a}^{2^ky_1}f_1\right\|_{p_1}\left\|S_{\psi}^{2^ky_2}f_1\right\|_{p_2}dy_1dy_2\\
		\lesssim&\;\|f_1\|_{p_1}\|f_2\|_{p_2}\int_{\R^{2d}}\left|\Omega\left(\frac{(y_1,y_2)}{|(y_1,y_2)|}\right)\right||(y_1,y_2)|^{-2d}|\beta(|(y_1,y_2)|)|\log(e+|2^ky_1|)^{\left|\frac{1}{p_1}-\frac{1}{2}\right|}\log(e+|2^ky_2|)^{\left|\frac{1}{p_2}-\frac{1}{2}\right|}dy_1dy_2\\
		\lesssim&\; |k|^{\left|\frac{1}{p_1}-\frac{1}{2}\right|+\left|\frac{1}{p_2}-\frac{1}{2}\right|}\|\Omega\|_1\|f_1\|_{p_1}\|f_2\|_{p_2},
	\end{align*}
	where we used the estimate \eqref{shiftsquare} in the second last step.

	Next, we prove estimates for mixed frequency terms $T_{\Omega,k}^{HL}$ and $T_{\Omega,k}^{LH}$. Since the arguments for both the estimates are similar,  we indicate the proof for $T_{\Omega,k}^{HL}$ only. We observe that for $2^{i+k-1}\leq|\xi_1|\leq2^{i+k+1}$ and $|\xi_2|\leq2^{i+k-9}$ we have $2^{i+k-2}\leq|\xi_1+\xi_2|\leq2^{i+k+2}$. Hence we can write
	\[\langle T_\Omega^i(f_1*\psi_{i+k},f_2*\phi_{i+k-10}),f_3\rangle=\langle T_\Omega^i(f_1*\psi_{i+k},f_2*\phi_{i+k-10}),f_3*\widetilde{\psi}_{i+k}\rangle,\quad i\in\Z,\]
	where $\widetilde{\psi}\in\mathcal{S}(\R^d)$ is such that $\widehat{\widetilde{\psi}}(\xi)=1$ for $2^{-2}\leq|\xi|\leq2^2$ and $\widehat{\widetilde{\psi}}$ is supported in the annulus $\{2^{-3}\leq|\xi|\leq2^3\}$.

	By change of variables argument followed by an application of Cauchy-Schwarz and H\"older's inequalities as before, we obtain
	\begin{align*}
		&\;|\langle T_{\Omega,k}^{HL}(f_1,f_2),f_3\rangle|\\
		\lesssim&\;\int_{\R^d}\int_{\R^{2d}}\left|\Omega\left(\frac{(y_1,y_2)}{|(y_1,y_2)|}\right)\right||(y_1,y_2)|^{-2d}|\beta(|(y_1,y_2)|)|\sum_{i\in\Z}|f_1*\psi_{i+k}(x-2^{-i}y_1)f_3*\widetilde{\psi}_{i+k}(x)|\\
		&\;\hspace{8cm}|f_2*\phi_{i+k}(x-2^{-i}y_2)|dy_1dy_2dx\\
		\lesssim&\;\int_{\R^d}\int_{\R^{2d}}\left|\Omega\left(\frac{(y_1,y_2)}{|(y_1,y_2)|}\right)\right||(y_1,y_2)|^{-2d}|\beta(|(y_1,y_2)|)|\sum_{i\in\Z}|f_1*\psi_{i+k}(x+2^{-i}y_2-2^{-i}y_1)f_3*\widetilde{\psi}_{i+k}(x+2^{-i}y_2)|\\
		&\;\hspace{8cm}|f_2*\phi_{i+k}(x)|dy_1dy_2dx\\
		\lesssim&\;\int_{\R^{2d}}\left|\Omega\left(\frac{(y_1,y_2)}{|(y_1,y_2)|}\right)\right||(y_1,y_2)|^{-2d}|\beta(|(y_1,y_2)|)|\left\|S_{\psi}^{2^k(y_1-y_2)}f_1\right\|_{p_1}\left\|S_{\widetilde{\psi}}^{-2^ky_2}f_3\right\|_{p'}\left\|Mf_2\right\|_{p_2}dy_1dy_2\\
		\lesssim&\;\|f_1\|_{p_1}\|f_2\|_{p_2}\|f_3\|_{p'}\int_{\R^{2d}}\left|\Omega\left(\frac{(y_1,y_2)}{|(y_1,y_2)|}\right)\right||(y_1,y_2)|^{-2d}|\beta(|(y_1,y_2)|)|\\
		&\;\hspace{6cm}\log(e+|2^k(y_1-y_2)|)^{\left|\frac{1}{p_1}-\frac{1}{2}\right|}\log(e+|2^ky_2|)^{\left|\frac{1}{p'}-\frac{1}{2}\right|}dy_1dy_2\\
		\lesssim&\; |k|^{\left|\frac{1}{p_1}-\frac{1}{2}\right|+{\left|\frac{1}{p'}-\frac{1}{2}\right|}}\|\Omega\|_1\|f_1\|_{p_1}\|f_2\|_{p_2}\|f_3\|_{p'},
	\end{align*}
	where we used the estimate \eqref{shiftsquare} in the second last step. This completes the proof of \Cref{lemma:p1p2pGrowth}.
\qed

\subsection{Proof of \Cref{lemma:endpointCZ}}
	The proof for each term $I\in\{HL,LH,HH\}$ is similar as the proof does not rely on the Fourier support of the functions $\psi$ and $\phi$. Therefore, we prove the estimate only for $I=HH$. We note that the required two inequalities collapse to a single $(1,1,\frac{1}{2})-$weak type estimate when $q=\infty$. We prove these two cases separately.
	\subsection*{Case 1. $\mathbf{1<q<\infty}$:} We prove the weak type estimate at the point $\left(1,\frac{p}{1-p},p\right)$. The estimate for the other point follows by symmetry. Without loss of generality, we may assume that  $\|f_1\|_{1}=\|f_2\|_{\frac{p}{1-p}}=\|\Omega\|_q=1$. For $\lambda>0$, we employ the Calder\'on-Zygmund decomposition to the function $f_1$ with height $\lambda^{p}$ to obtain a collection $\mathcal{C}$ of disjoint dyadic cubes in $\R^d$ such that
	\begin{enumerate}[i)]
		\item $f_1=g+b$ with $b=\sum_{s\in\Z}\left(\sum_{Q\in\mathcal{C}:|Q|=2^{-sd}}b_{Q}\right)=\sum_{s\in\Z}B^{s}$,
		\item $\|g\|_\infty\lesssim\lambda^p$ and $\|g\|_{1}\leq1$,
		\item $\int b_{Q}(x)dx=0$ and $\supp{b_{Q}}\subseteq Q$,
		\item $\|b_{Q}\|_{1}\lesssim\lambda^{p}|Q|$ and $\sum_{Q\in\mathcal{C}}|Q|\lesssim\frac{1}{\lambda^{p}}$. In particular, this implies $\sum_s\|B^{s}\|_{p}\lesssim1$.
	\end{enumerate}
	We define the exceptional set $E=\cup_{Q\in\mathcal{C}}4Q$. Thus $|E|\lesssim\frac{1}{\lambda^{p}}$. Consider
	\begin{align*}
		&\left|\left\{x\in \R:|T^{HH}_{\Omega,k}(f_1,f_2)(x)|>\lambda\right\}\right|\\
		\lesssim&\;|E|+\left|\left\{x\in \R:|T^{HH}_{\Omega,k}(g,f_2)(x)|>\frac{\lambda}{2}\right\}\right|+\left|\left\{x\in E^c:|T^{HH}_{\Omega,k}(b,f_2)(x)|>\frac{\lambda}{2}\right\}\right|\\
		=&\;|E|+I_g+I_b.
	\end{align*}
	\subsection*{Estimate for $I_g$:} To estimate the level set with good terms, we use the $L^{(2p)'}\times L^{\frac{p}{1-p}} \to L^{2p}-$boundedness of $T^{HH}_{\Omega,k}$ to obtain
	\begin{align*}
		I_g\lesssim&\;\frac{|k|^c}{\lambda^{2p}}\|g\|_{(2p)'}^{2p}\|f_2\|_{\frac{p}{1-p}}^{2p}\\
		\lesssim&\;\frac{|k|^c}{\lambda^{2p}}(\lambda^p)^{\left(1-\frac{1}{(2p)'}\right)2p}\|g\|_{1}^{2p}\|f_2\|_{\frac{p}{1-p}}^{2p}\\
		\lesssim&\;\frac{|k|^c}{\lambda^p}.
	\end{align*}
	\subsection*{Estimate for $I_b$:} To estimate $I_b$, it is enough to show that there exists a constant $c>0$ such that
	\[\left|\left\{x\in E^c:|\sum_{i\in\Z}T^i_\Omega(b*\psi_{i+k+a},f_2*\psi_{i+k})(x)|>\frac{\lambda}{2}\right\}\right|\lesssim \frac{1}{\lambda^p},\]
	for all $a\in\{-10,\dots,10\}$. Without loss of generality, we will prove this for $a=0$ only. We observe that for $x\in E^c$, $x$ lies at least $2^{-s+1}$ distance away from any cube $Q\in\mathcal{C}$ of measure $2^{-sd}$. Thus, for $s<i$, the distance of $x+2^{-i}y_2$ from the cube $Q$ is larger than $2^{-s}$ for any $(y_1,y_2)\in\supp{K^0}\subset\{\frac{1}{2}\leq|(y_1,y_2)|\leq2\}$. This implies that
	\[T_\Omega^i\left(B^{s}*\psi_{k+i},f_2*\psi_{k+i}\right)(x)=T_\Omega^i\left(B^{s}*(\chi_{|\cdot|>2^{-s}}\psi_{k+i}),f_2*\psi_{k+i}\right)(x),\;\text{for}\;s<i\;\text{and}\;x\in E^c.\]
	We set $\psi_{k+i}^s=\begin{cases}
	\psi_{k+i}, &s\geq i\\ \chi_{|\cdot|>2^{-s}}\psi_{k+i}, &s<i.
	\end{cases}$. 
	
	Next, we have the following estimates for their interaction with the bad functions $B_s$.
	\begin{lemma}\label{lemma:badterm}
		For any $k\in\N$, we have
		\begin{enumerate}[i)]
			\item $\|B^{s}*\psi_{k+i}^s\|_{1}\lesssim\min\{1,2^{k+i-s}\}\|B^{s}\|_{1}$, for $s\geq i$.
			\item For any $s<i$ and $N\in\N$, $\|B^{s}*\psi_{k+i}^s\|_{1}\lesssim 2^{-(k+i-s)N}\|B^{s}\|_{1}$.
		\end{enumerate}
	\end{lemma}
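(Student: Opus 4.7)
The plan is to treat the two parts by complementary mechanisms: for (i) I would combine Young's convolution inequality with the vanishing mean property $\int b_Q = 0$, and for (ii) I would use Young's inequality together with the rapid decay of the Schwartz function $\psi$.

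For part (i), the bound $1$ in the minimum is simply Young's inequality, since $\|\psi_{k+i}\|_1 = \|\psi\|_1$ by dilation invariance:
\[\|B^s * \psi_{k+i}\|_1 \leq \|B^s\|_1\|\psi_{k+i}\|_1 \lesssim \|B^s\|_1.\]
For the gain factor $2^{k+i-s}$, I would exploit the cancellation of each $b_Q$. Fix $Q\in\mathcal{C}$ with $|Q|=2^{-s}$ and let $c_Q$ denote its centre. Using $\int b_Q = 0$, write
\[b_Q * \psi_{k+i}(x) = \int b_Q(y)\bigl[\psi_{k+i}(x-y) - \psi_{k+i}(x - c_Q)\bigr]dy.\]
Take absolute values, integrate in $x$, and apply Fubini. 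The resulting inner integral equals $\int |\psi_{k+i}(w - \tau) - \psi_{k+i}(w)|\,dw$ with $\tau = y - c_Q$, and after rescaling $u = 2^{k+i}w$ it becomes $\int |\psi(u - 2^{k+i}\tau) - \psi(u)|\,du \lesssim \min\{2^{k+i}|\tau|,\,1\}$, where the small-shift estimate follows from Taylor expansion controlled by $\|\psi'\|_1$ and the trivial estimate from the triangle inequality and $\|\psi\|_1$. Since $y\in Q$ gives $|\tau|\leq 2^{-s}$, we obtain $\|b_Q * \psi_{k+i}\|_1 \lesssim 2^{k+i-s}\|b_Q\|_1$. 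Summing over the disjointly supported family $\{b_Q : |Q| = 2^{-s}\}$ (so that $\|B^s\|_1 = \sum_Q \|b_Q\|_1$) yields the second factor in the minimum.

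For part (ii), cancellation is not needed; Young's inequality and the Schwartz decay of $\psi$ suffice. A change of variables gives
\[\|\psi_{k+i}^s\|_1 = \int_{|z|>2^{-s}}|\psi_{k+i}(z)|\,dz = \int_{|u|>2^{k+i-s}}|\psi(u)|\,du \lesssim 2^{-(k+i-s)N}\]
for every $N\in\N$, valid because $\psi\in\mathcal{S}(\R)$ and $k+i-s\geq 1$ under the assumption $s<i$. Hence $\|B^s * \psi_{k+i}^s\|_1 \leq \|B^s\|_1\|\psi_{k+i}^s\|_1 \lesssim 2^{-(k+i-s)N}\|B^s\|_1$.

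I do not expect a real obstacle here; the only point demanding care is that the dichotomy $s\geq i$ versus $s<i$ is aligned with the truncation defining $\psi_{k+i}^s$, so that the two mechanisms (cancellation against a smooth kernel in regime (i), Schwartz tail in regime (ii)) each apply in the regime they were designed for. This alignment is inherited from the construction just above the lemma, where the truncation $\chi_{|\cdot|>2^{-s}}\psi_{k+i}$ is introduced precisely because for $s<i$ the convolution kernel's essential mass is supported outside the domain relevant to $x\in E^c$.
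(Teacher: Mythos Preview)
Your argument is correct and follows essentially the same route as the paper: Young's inequality gives the trivial bound, the mean-zero property of $b_Q$ combined with the mean value theorem yields the gain $2^{k+i-s}$ in regime (i), and the Schwartz tail estimate for $\psi$ handles the truncated kernel in regime (ii). The paper is terser but the ideas coincide.
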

	\begin{proof}
		For the first estimate, we note that \[\|B^{s}*\psi_{k+i}^s\|_{1}\lesssim\|B^{s}\|_{1}\|\psi_{k+i}^s\|_1\lesssim\|B^{s}\|_{1}.\] By the mean zero condition of $b_{Q}$ for each $Q\in\mathcal{C}$ and the mean value theorem, we have
		\[\|b_{Q}*\psi_{k+i}\|_1\lesssim 2^{k+i-s}\|b_{Q}\|_{1}.\]
		After summing in $Q\in\mathcal{C}$ we obtain
		\[\|B^{s}*\psi_{k+i}\|_{1}\lesssim2^{k+i-s}\|B^{s}\|_{1}.\]
		For the second estimate,  since $\psi\in\mathcal{S}(\R)$, we have
		\[\|\chi_{|\cdot|>2^{-s}}\psi_{k+i}\|_1\lesssim2^{-(k+i-s)N}.\]
		This produces the required estimate.
\end{proof}

	By a change of variable argument and H\"older's inequality with exponents  $\frac{1}{q}+\frac{1-p}{p}=1$, we have the following pointwise domination
	\begin{align*}
		&\;\left|\sum_{i\in\Z}T^i_\Omega(b*\psi_{i+k},f_2*\psi_{i+k})(x)\right|\\
		\lesssim&\;\sum_{i\in\Z}\int_{\R^{2^d}}|K^0(y_1,y_2)|\sum_{s\in\Z} |B^s*\psi_{k+i}^s(x-2^{-i}y_1)||f_2*\psi_{k+i}(x-2^{-i}y_2)|dy_1dy_2\\
		\lesssim&\;\int_{\R^{2^d}}|K^0(y_1,y_2)|\sum_{i\in\Z}\sum_{s\in\Z} |B^s*\psi_{k+i}^s(x-2^{-i}y_1)|Mf_2(x-2^{-i}y_2)dy_1dy_2\\
		\lesssim&\;\int_{|y_1|\leq2}\sum_{i\in\Z}\sum_{s\in\Z} |B^s*\psi_{k+i}^s(x-2^{-i}y_1)|\left(\int_{|y_2|\leq2}|K^0(y_1,y_2)|^qdy_2\right)^\frac{1}{q}\left(\int_{|y_2|\leq2}Mf_2(x-2^{-i}y_2)^{\frac{p}{1-p}}dy_2\right)^\frac{1-p}{p}dy_1\\
		\lesssim&\;M_{\frac{p}{1-p}}Mf_2(x)\left(\int_{|y_1|\leq2}\sum_{i\in\Z}\sum_{s\in\Z} |B^s*\psi_{k+i}^s(x-2^{-i}y_1)|\left(\int_{|y_2|\leq2}|K^0(y_1,y_2)|^qdy_2\right)^\frac{1}{q}dy_1\right).
	\end{align*}
	This yields
	\begin{align*}
		&\;\left|\left\{x\in E^c:|\sum_{i\in\Z}T^i_\Omega(b*\psi_{i+k+a},f_2*\psi_{i+k})(x)|>\frac{\lambda}{2}\right\}\right|\\
		\lesssim&\;\left|\left\{x\in E^c:M_{\frac{p}{1-p}}Mf_2(x)>\lambda^{1-p}\right\}\right|\\
		&+\left|\left\{x\in E^c:\left(\int_{|y_1|\leq2}\sum_{i\in\Z}\sum_{s\in\Z} |B^s*\psi_{k+i}^s(x-2^{-i}y_1)|\left(\int_{|y_2|\leq2}|K^0(y_1,y_2)|^qdy_2\right)^\frac{1}{q}dy_1\right)>\lambda^p\right\}\right|.
	\end{align*}
	The estimate for the first term follows by the weak type $(\frac{p}{1-p},\frac{p}{1-p})-$boundedness of $M_{\frac{p}{1-p}}$ and strong type boundedness of $M$. By Chebyshev's inequality and \Cref{lemma:badterm}, the second term is dominated by
	\begin{align*}
		&\;\frac{1}{\lambda^p}\int_{\R^d}\int_{|y_1|\leq2}\sum_{i\in\Z}\sum_{s\in\Z} |B^s*\psi_{k+i}^s(x-2^{-i}y_1)|\left(\int_{|y_2|\leq2}|K^0(y_1,y_2)|^qdy_2\right)^\frac{1}{q}dy_1dx\\
		\lesssim&\;\frac{1}{\lambda^p}\left(\int_{|y_1|\leq2}\left(\int_{|y_2|\leq2}|K^0(y_1,y_2)|^qdy_2\right)^\frac{1}{q}dy_1\right)\sum_{s\in\Z}\sum_{i\in\Z} \|B^s*\psi_{k+i}^s(x)\|_1\\
		\lesssim&\;\frac{1}{\lambda^p}\|\Omega\|_q\sum_{s\in\Z}\sum_{i\in\Z} \min\{2^{k+i-s},2^{-(k+i-s)}\}\|B^s\|_1\\
		\lesssim&\;\frac{1}{\lambda^p}.
	\end{align*}
	This concludes the proof for the case when $1<q<\infty$.

	\subsection*{Case 2. $\mathbf{q=\infty}$:} Note that in this case $p=\frac{1}{2}$. Let $f_1,f_2\in L^1(\R^d)$. Without loss of generality we may assume that $\|f_l\|_1=1,\;l=1,2$. In this case, we need to apply Calder\'on-Zygmund decomposition to both the functions $f_1,f_2$ with height $\lambda^\frac{1}{2}$. This gives us two collections $\mathcal{C}_1,\mathcal{C}_2$ of disjoint dyadic cubes such that for $l=1,2$, we have
	\begin{enumerate}[i)]
		\item $f_l=g_l+b_l$ with $b_l=\sum_{s_l\in\Z}\left(\sum_{Q\in\mathcal{C}_l:|Q|=2^{-s_ld}}b_{l,Q}\right)=\sum_{s\in\Z}B_l^{s_l}$,
		\item $\|g_l\|_\infty\lesssim\lambda^\frac{1}{2}$ and $\|g_l\|_{1}\leq1$,
		\item $\int b_{l,Q}(x)dx=0$ and $\supp{b_{l,Q}}\subseteq Q$,
		\item $\|b_{l,Q}\|_{1}\lesssim\lambda^{\frac{1}{2}}|Q|$ and $\sum_{Q\in\mathcal{C}_l}|Q|\lesssim\frac{1}{\lambda^{\frac{1}{2}}}$. In particular, this implies $\sum_{s_l}\|B_l^{s_l}\|_{1}\lesssim1$.
	\end{enumerate}
	As earlier, we define the exceptional set $E=\cup_{Q\in\mathcal{C}}4Q$ and note that $|E|\lesssim\frac{1}{\lambda^{\frac{1}{2}}}$. Consider
	\begin{align*}
		&\left|\left\{x\in \R:|T^{HH}_{\Omega,k}(f_1,f_2)(x)|>\lambda\right\}\right|\\
		\lesssim&\;|E|+\left|\left\{x\in \R:|T^{HH}_{\Omega,k}(g_1,g_2)(x)|>\frac{\lambda}{4}\right\}\right|\\
		&+\left|\left\{x\in \R:|T^{HH}_{\Omega,k}(b_1,g_2)(x)|>\frac{\lambda}{4}\right\}\right|+\left|\left\{x\in \R:|T^{HH}_{\Omega,k}(g_1,b_2)(x)|>\frac{\lambda}{4}\right\}\right|\\
		&+\left|\left\{x\in \R:|T^{HH}_{\Omega,k}(b_1,b_2)(x)|>\frac{\lambda}{4}\right\}\right|\\
		=&\;|E|+I_{gg}+I_{bg}+I_{gb}+I_{bb}.
	\end{align*}
	\subsection*{Estimate for $I_{gg}$:} The desired estimate for $I_{gg}$ follows from $L^2\times L^2\to L^1-$boundedness of $T_{\Omega,k}^{HH}$ from \Cref{lemma:p1p2pGrowth}. 
	\subsection*{Estimate for $I_{gb}$ and $I_{bg}$:} By symmetry, it is enough to bound $I_{gb}$. By Chebyshev's inequality and \Cref{lemma:badterm}, we have
	\begin{align*}
		I_{gb}\lesssim&\;\frac{1}{\lambda}\sum_{i\in\Z}\sum_{s_1\in\Z}\int_{\R^d}\int |K^i(y_1,y_2)||B_1^{s_1}*\psi_{k+i}^{s_1}(x-y_1)||g_2*\psi_{k+i}(x-y_2)|dy_1dy_2dx\\
		\lesssim&\;\frac{1}{\lambda}\sum_{i\in\Z}\sum_{s_1\in\Z}\|K^i\|_1\|B_1^{s_1}*\psi_{k+i}^{s_1}\|_1\|g_2*\psi_{k+i}\|_\infty\\
		\lesssim&\;\frac{1}{\lambda^\frac{1}{2}}\|\Omega\|_1\sum_{i\in\Z}\sum_{s_1\in\Z}\min\{2^{k+i-s_1},2^{-(k+i-s_1)}\}\|B^s\|_1\\
		\lesssim&\;\frac{1}{\lambda^\frac{1}{2}}.
	\end{align*}
	\subsection*{Estimate for $I_{bb}$:} We apply the Chebyshev's inequality and use  single scale estimate \eqref{singlescale:Lpbound} and \Cref{lemma:badterm} to obtain
	\begin{align*}
		I_{bb}\lesssim&\;\frac{1}{\lambda^\frac{1}{2}}\sum_{i\in\Z}\sum_{s_1\in\Z}\sum_{s_2\in\Z}\|T^i_\Omega(B_1^{s_1}*\psi_{k+i}^{s_1},B_2^{s_2}*\psi_{k+i}^{s_2})\|_\frac{1}{2}^\frac{1}{2}\\
		\lesssim&\;\frac{1}{\lambda^\frac{1}{2}}\sum_{i\in\Z}\sum_{s_1\in\Z}\sum_{s_2\in\Z}\|B_1^{s_1}*\psi_{k+i}^{s_1}\|_1^\frac{1}{2}\|B_2^{s_2}*\psi_{k+i}^{s_2}\|_1^\frac{1}{2}\\
		\lesssim&\;\frac{1}{\lambda^\frac{1}{2}}\sum_{s_1\in\Z}\sum_{s_2\in\Z}\sum_{i\in\Z}\min_{l=1,2}\min\{1,2^{\frac{k+i-s_l}{2}},2^{-\frac{(i-s_l)}{2}}\}\|B_1^{s_1}\|_1^\frac{1}{2}\|B_2^{s_2}\|_1^\frac{1}{2}\\
		\lesssim&\;\frac{1}{\lambda^\frac{1}{2}}|k|\sum_{s_1\in\Z}\sum_{s_2\in\Z}\min\{1,2^{-\frac{(|s_1-s_2|-|k|)}{2}}\}\|B_1^{s_1}\|_1^\frac{1}{2}\|B_2^{s_2}\|_1^\frac{1}{2}\\
		\lesssim&\;\frac{1}{\lambda^\frac{1}{2}}|k|\left(\sum_{s_1\in\Z}\sum_{s_2\in\Z}\min\{1,2^{-\frac{(|s_1-s_2|-|k|)}{2}}\}\|B_1^{s_1}\|_1\right)^\frac{1}{2}\left(\sum_{s_1\in\Z}\sum_{s_2\in\Z}\min\{1,2^{-\frac{(|s_1-s_2|-|k|)}{2}}\}\|B_2^{s_2}\|_1\right)^\frac{1}{2}\\
		\lesssim&\;\frac{1}{\lambda^\frac{1}{2}}|k|^2.
	\end{align*}
	This concludes the proof of \Cref{lemma:endpointCZ}.
\qed

\subsection{Proof of \Cref{lemma:221}}
	For the high frequency term $I=HH$, we use \Cref{thm:Smoothing} and Cauchy-Schwarz inequality along with bounded overlapping property of Fourier supports of functions $\widetilde{\psi}_{i+k},i\in\Z$ to obtain
	\begin{align*}
		\|T_{\Omega,k}^{HH}(f_1,f_2)\|_1\lesssim&\;\sum_{i\in\Z}\sum_{a=-10}^{10}\|T^i_\Omega(f_1*\widetilde{\psi}_{i+k+a}*\psi_{i+k+a},f_2*\widetilde{\psi}_{i+k}*\psi_{i+k})\|_1\\
		\lesssim&\;2^{-ck}\|\Omega\|_q\sum_{i\in\Z}\sum_{a=-10}^{10}\|f_1*\widetilde{\psi}_{i+k+a}\|_2\|f_2*\widetilde{\psi}_{i+k+a}\|_2\\
		\lesssim&\;2^{-ck}\|\Omega\|_q\sum_{a=-10}^{10}\;\left(\sum_{i\in\Z}\|f_1*\widetilde{\psi}_{i+k+a}\|_2^2\right)^\frac{1}{2}\left(\sum_{i\in\Z}\|f_2*\widetilde{\psi}_{i+k}\|_2^2\right)^\frac{1}{2}\\
		\lesssim&\;2^{-ck}\|\Omega\|_q\|f_1\|_2\|f_2\|_2.
	\end{align*}
	As previously, since the proof of desired estimates for mixed frequency terms $I=LH,HL$ are similar in nature. We provide the details for $T_{\Omega,k}^{LH}$. 
	
	First observe that Fourier support of $T_{\Omega}^i(f_1*\phi_{i+k-10},f_2*\psi_{i+k}),\;i\in\Z$ lies in the annular region  $\{2^{i+k-4} \leq|\xi| \leq 2^{i+k+4}\}$. Therefore, by applying the estimate \eqref{inq:Trilinearsmoothing1} from \Cref{thm:Smoothing}, we have that
	\begin{align*}
		\sum_{i\in\Z}\|T_{\Omega}^i(f_1*\phi_{i+k-10},f_2*\psi_{i+k})\|_2^2\lesssim&\;2^{-2ck}\|\Omega\|_q^2\sum_{i\in\Z}\|f_1*\phi_{i+k-10}\|_\infty^2\|f_2*\psi_{i+k}\|_2^2\\
		\lesssim&\;2^{-2ck}\|\Omega\|_q^2\|f_1\|_\infty^2\|f_2\|_2^2.
	\end{align*}
	Next, we apply \Cref{lemma:GrafMedium} to get that 
	\begin{align*}
		\|T_{\Omega,k}^{LH}(f_1,f_2)\|_2\lesssim&\;\left\|\left(\sum_{i \in \Z}\left|T_{\Omega}^i(f_1*\phi_{i+k-10},f_2*\psi_{i+k})\right|^2\right)^{\frac{1}{2}}\right\|_{2}\\
		=&\;\left(\sum_{i\in\Z}\|T_{\Omega}^i(f_1*\phi_{i+k-10},f_2*\psi_{i+k})\|_2^2\right)^\frac{1}{2}\\
		\lesssim&\;2^{-ck}\|\Omega\|_q\|f_1\|_\infty\|f_2\|_2.
	\end{align*}
\qed
\section{Proof of \Cref{Thm:RoughBanach}}\label{sec:proofrough1}
Without loss of generality we assume $\|\Omega\|_{L(\log L)^A}=1$. Also,  fix the exponents $p_1,p_2,p$ throughout the proof. We follow exactly the same decomposition as that in the proof of \Cref{Thm:Rough}. We have
\[T_\Omega(f_1,f_2)(x)=T_{\Omega}^{LL}(f_1,f_2)(x)+\sum_{k\in\N}T_{\Omega,k}^{HL}(f_1,f_2)(x)+\sum_{k\in\N}T_{\Omega,k}^{LH}(f_1,f_2)(x)+\sum_{k\in\N}T_{\Omega,k}^{HH}(f_1,f_2)(x).\]
As earlier, $T_\Omega^{LL}$ is bounded in the required range for all $\Omega\in L^1(\Sp^{2d-1})$. The desired estimates for the remaining terms are obtained by exploiting the exponential decay in $k$ in \Cref{lemma:p1p2pDecay} along with a decomposition of the function $\Omega$ into appropriate $L^\infty$ and $L^1$ parts. For $I\in\{HL,LH,HH\}$ we write 
\[T_{\Omega,k}^I=T_{\Omega^+,k}^I+T_{\Omega^-,k}^I, \]
where 
\[\Omega^+(\theta)=\Omega(\theta)\chi_{\{|\Omega(\cdot)|\leq2^{\frac{ck}{2}}\}}(\theta)\quad\text{and}\quad \Omega^-(\theta)=\Omega(\theta)\chi_{\{|\Omega(\cdot)|>2^{\frac{ck}{2}}\}}(\theta),\]
with $c>0$ is as in \Cref{lemma:p1p2pDecay}. Applying \Cref{lemma:p1p2pDecay}, we have
\begin{align*}
	\left\|\sum_{k\in\N}T_{\Omega^+,k}^{I}(f_1,f_2)\right\|_p\lesssim&\;\sum_{k\in\N}2^{-ck}\|\Omega^+\|_q\|f_1\|_{p_1}\|f_2\|_{p_2}\\
	\lesssim&\;\sum_{k\in\N}2^{-\frac{ck}{2}}\|f_1\|_{p_1}\|f_2\|_{p_2}\lesssim\|f_1\|_{p_1}\|f_2\|_{p_2}.
\end{align*}
We set 
\[a_{HH}=\left|\frac{1}{p_1}-\frac{1}{2}\right|+\left|\frac{1}{p_2}-\frac{1}{2}\right|,\quad a_{HL}=\left|\frac{1}{p_1}-\frac{1}{2}\right|+\left|\frac{1}{p'}-\frac{1}{2}\right|,\quad a_{LH}=\left|\frac{1}{p_2}-\frac{1}{2}\right|+\left|\frac{1}{p'}-\frac{1}{2}\right|.\].
To obtain the estimate for $T_{\Omega^-,k}^{I},\;I\in\{HL,LH,HH\}$, we use \Cref{lemma:p1p2pGrowth} to get
\begin{align*}
	\left\|\sum_{k\in\N}T_{\Omega^-,k}^{I}(f_1,f_2)\right\|_p\lesssim&\;\sum_{k\in\N}k^{a_I}\|\Omega^+\|_1\|f_1\|_{p_1}\|f_2\|_{p_2}\\
	\lesssim&\;\|f_1\|_{p_1}\|f_2\|_{p_2}\int_{\Sp^{2d-1}}|\Omega(\theta)|\sum_{k\in\N}k^{a_I}\chi_{|\Omega(\cdot)|>2^{\frac{ck}{2}}}(\theta)d\sigma(\theta)\\
	\lesssim&\;\|f_1\|_{p_1}\|f_2\|_{p_2}\int_{\Sp^{2d-1}}|\Omega(\theta)|\log(e+|\Omega(\theta)|)^{1+{a_I}}d\sigma(\theta)\\
	\leq&\;\|f_1\|_{p_1}\|f_2\|_{p_2}.
\end{align*}
This concludes the proof of \Cref{Thm:RoughBanach}.
\section{Proof of \Cref{Thm:Roughsinglemaximal}}\label{sec:proofsinglemaximal}
We note that it is enough to estimate the operator \[\sup_{i\in\Z}|K^i*(f_1\otimes f_2)(x)|=\sup_{i\in\Z} |T_\Omega^i(f_1,f_2)(x)|,\] where $K^i(y_1,y_2)=\Omega\left(\frac{(y_1,y_2)}{|(y_1,y_2)|}\right)2^{2di}\beta_i(|(y_1,y_2)|)$ with $\Omega\geq0$ and $\beta$ is a compactly supported non-negative radial function which takes the value $1$ on $[\frac{1}{2},2]$. Using the notation as in proof of \Cref{Thm:Rough}, we write
\begin{align*}
	\sup_{i\in\Z}|T_\Omega^i(f_1,f_2)(x)|\lesssim&\;\sup_{i\in\Z}\Big|T_\Omega^i(f_1*\phi_i,f_2*\phi_i)(x)\Big|+\sum_{k\in\N}\sup\limits_{i\in\Z}\Big|T^i_\Omega(f_1*\psi_{i+k},f_2*\phi_{i+k-10})(x)\Big|,\\
	&\;+\sum_{k\in\N}\sup\limits_{i\in\Z}\Big|T^i_\Omega(f_1*\phi_{i+k-10},f_2*\psi_{i+k})(x)\Big|+
	\sum_{a=-10}^{10}\sum_{k\in\N}
	\sup\limits_{i\in\Z}\Big|T^i_\Omega(f_1*\psi_{i+k+a},f_2*\psi_{i+k})(x)\Big|.\\
	&\;=:M^{LL}_{\Omega}(f_1,f_2)(x)+\sum_{k\in\N}M^{HL}_{\Omega,k}(f_1,f_2)(x)+\sum_{k\in\N}M^{LH}_{\Omega,k}(f_1,f_2)(x)+\sum_{k\in\N}M^{HH}_{\Omega,k}(f_1,f_2)(x).
\end{align*}
\subsection*{Estimate for $M^{LL}_{\Omega}$:}The estimate for the first term on the right side of the expression above follows from the pointwise bound,
\[M^{LL}_{\Omega}(f_1,f_2)(x)\lesssim\|\Omega\|_1Mf_1(x)Mf_2(x),\]
which is an easy consequence of the bound $|f*\phi_i(x-2^{-i}y)|\lesssim Mf(x)$, for any $y\in B(0,2)$.

\subsection*{Estimates for $M^{HL}_{\Omega,k}$, $M^{LH}_{\Omega,k}$, and $M^{HH}_{\Omega,k}$:} We note that by the scheme used in the proof of \Cref{Thm:Rough}, it is enough to prove the following lemma.
\begin{lemma}\label{lemma:MOmega}
	Let $k\in\N$. Then, the following holds true
\begin{enumerate}
	\item For $\frac{1}{p}=\frac{1}{p_1}+\frac{1}{p_2}\leq1$ and $I\in\{HL,LH,HH\}$, there exists $c>0$ such that
	\begin{align}\label{est:MOmegagrowth}
		\|M^{I}_{\Omega,k}(f_1,f_2)\|_p&\lesssim k^c\|\Omega\|_1\|f_1\|_{p_1}\|f_2\|_{p_2}.
	\end{align}
	\item For $1< q\leq\infty$, there exists $c>0$ such that for $I\in\{HL,LH,HH\}$, we have
	\begin{align}\label{est:MOmegaendpoint}
		\|M_{\Omega,k}^{I}(f_1,f_2)\|_{L^{\frac{1}{2},\infty}}&\lesssim k^c\|\Omega\|_q\|f_1\|_{1}\|f_2\|_{1},
	\end{align} 
	for all $\Omega\in L^q(\Sp^1)$ with $\supp{\Omega}\subset\{(\theta_1,\theta_2)\in\Sp^1:\;|\theta_1-\theta_2|\geq\frac{1}{2}\}$.
	\item For $1<q\leq\infty$, there exists $C>0$ so that 
	\begin{equation}\label{est:MOmegadecay}
		\begin{aligned}
			\|M^{HL}_{\Omega,k}(f_1,f_2)\|_2&\lesssim 2^{-Ck}\|\Omega\|_q\|f_1\|_2\|f_2\|_\infty,\\
			\|M^{LH}_{\Omega,k}(f_1,f_2)\|_2&\lesssim 2^{-Ck}\|\Omega\|_q\|f_1\|_\infty\|f_2\|_2,\\
			\|M^{HH}_{\Omega,k}(f_1,f_2)\|_1&\lesssim 2^{-Ck}\|\Omega\|_q\|f_1\|_2\|f_2\|_2.
		\end{aligned}
	\end{equation}
\end{enumerate}
\end{lemma}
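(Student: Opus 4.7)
The plan is to mirror the strategy of the proof of \Cref{Thm:Rough} with modifications that handle the supremum over $i\in\Z$. The key observation is that after writing
\[T^i_\Omega(f_1*\psi_{i+k+a},f_2*\psi_{i+k})(x)=\int_{\R^2}K^0(y_1,y_2)\,f_1*\psi_{i+k+a}(x-2^{-i}y_1)\,f_2*\psi_{i+k}(x-2^{-i}y_2)\,dy_1dy_2,\]
and since $|K^0|$ is independent of $i$, the supremum can be pulled inside the integral against the kernel.

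For part (1), I would bound $\sup_i|T^i_\Omega(\ldots)(x)|$ by the product $\sup_i|f_1*\psi_{i+k+a}(x-2^{-i}y_1)|\cdot\sup_j|f_2*\psi_{j+k}(x-2^{-j}y_2)|$ integrated against $|K^0|$. After reparameterizing the index, each inner supremum becomes a shifted Littlewood--Paley maximal operator with shift of order $2^k|y_l|$, whose $L^{p_l}$ norm grows like $\log(e+2^k)^{1/p_l}\lesssim k^{1/p_l}$ by \eqref{shiftmaximal}. Applying Minkowski's integral inequality followed by H\"older then produces the required $|k|^c\|\Omega\|_1$-bound for $M^{HH}_{\Omega,k}$, with $c=1/p_1+1/p_2$; the mixed terms $M^{HL}_{\Omega,k}$ and $M^{LH}_{\Omega,k}$ are treated identically after replacing one shifted factor by the unshifted Hardy--Littlewood maximal function. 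For part (3), I would dominate $\sup_i$ pointwise by $(\sum_i|\cdot|^2)^{1/2}$ (or by $\sum_i|\cdot|$ when the target is $L^1$), apply \Cref{thm:Smoothing} to each $T^i_\Omega(\ldots)$, and conclude by Cauchy--Schwarz on the $\ell^2$-sum together with orthogonality of the Littlewood--Paley pieces $\{\psi_{i+k}\}_{i\in\Z}$, exactly paralleling the proof of \Cref{lemma:221}.

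For part (2), I would perform Calder\'on--Zygmund decomposition of both $f_1$ and $f_2$ at height $\lambda^{1/2}$ as in Case 2 of \Cref{lemma:endpointCZ}, splitting the level set into good-good, two mixed, and bad-bad contributions. The good-good term is controlled by the $L^2\times L^2\to L^1$ bound obtained in (1) together with $\|g_l\|_\infty\lesssim\lambda^{1/2}$; each mixed term uses $\|g_l\|_\infty\lesssim\lambda^{1/2}$, the decomposition $b_l=\sum_{s_l}B_l^{s_l}$, and the summability in $i$ provided by \Cref{lemma:badterm}. The bad-bad term is the main obstacle: I would use the quasi-subadditivity $\|\sup_i|F_i|\|_{L^{1/2}}^{1/2}\leq\sum_i\|F_i\|_{L^{1/2}}^{1/2}$ and then apply the single scale $(1,1,\tfrac12)$ inequality \eqref{singlescale:Lpbound_d=1}, whose validity requires precisely the support condition $|\theta_1-\theta_2|\geq\tfrac12$ on $\Omega$. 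Summing in $i$ via \Cref{lemma:badterm} and then in $(s_1,s_2)$ by Cauchy--Schwarz yields the claimed $|k|^c$ growth, reproducing the $I_{bb}$ computation of \Cref{lemma:endpointCZ}. Without the antidiagonal support assumption the single scale $(1,1,\tfrac12)$-inequality fails, which is precisely why the enlarged range $\mathcal{H}^\infty$ is available only under that hypothesis.
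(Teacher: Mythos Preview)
Your proposal is correct and aligns with the paper's approach: dominate $\sup_i$ pointwise by $\ell^1$ or $\ell^2$ sums (these are the paper's inequalities \eqref{Highptwise}--\eqref{Mediumptwise}), then rerun the arguments of \Cref{lemma:p1p2pGrowth}, \Cref{lemma:221}, and Case~2 of \Cref{lemma:endpointCZ}, the last with \eqref{singlescale:Lpbound_d=1} in place of \eqref{singlescale:Lpbound}. One small correction in part~(1) for the mixed terms: the low-frequency factor $\sup_i|f_2*\phi_{i+k-10}(x-2^{-i}y_2)|$ is \emph{not} dominated by the unshifted Hardy--Littlewood maximal function, since the displacement $2^{-i}y_2$ is $\sim 2^k$ times the scale $2^{-(i+k-10)}$ of $\phi_{i+k-10}$; you need the shifted maximal bound \eqref{shiftmaximal} (with shift $\sim 2^k y_2$), which the paper invokes and which still produces the required polynomial growth in $k$.
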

\begin{proof}
	For convenience we set,
	\begin{equation}\label{notationforpieces}
		\begin{aligned}
		T_\Omega^{HL,i}(f_1,f_2)(x)&=T^i_\Omega(f_1*\psi_{i+k},f_2*\phi_{i+k-10})(x),\\
		T_\Omega^{LH,i}(f_1,f_2)(x)&=T^i_\Omega(f_1*\phi_{i+k-10},f_2*\psi_{i+k})(x),\\
		T_\Omega^{HH,i}(f_1,f_2)(x)&=\sum_{a=-10}^{10}\Big|T^i_\Omega(f_1*\psi_{i+k+a},f_2*\psi_{i+k})(x)\Big|
	\end{aligned}
	\end{equation}
	By the embeddings $\ell_1\subset\ell_2\subset\ell_\infty$, we have the following pointwise bounds
	\begin{align}
		|M_{\Omega,k}^{I}(f_1,f_2)(x)|&\leq\sum\limits_{i\in\Z}\Big|T^{I,i}_\Omega(f_1,f_2)(x)\Big|,\quad I\in\{HL,LH,HH\},\label{Highptwise}\\
		|M_{\Omega,k}^{I}(f_1,f_2)(x)|&\leq\left(\sum\limits_{i\in\Z}\Big|T^{I,i}_\Omega(f_1,f_2)(x)\Big|^2\right)^\frac{1}{2},\quad I\in\{HL,LH\}.\label{Mediumptwise}
	\end{align}
	Hence, the estimate \eqref{est:MOmegagrowth} follows from Minkowski's and H\"older's inequalities along with the bounds \eqref{shiftsquare} and \eqref{shiftmaximal} for the  shifted square function and shifted maximal function respectively.
	
	The proof of the weak type estimate \eqref{est:MOmegaendpoint} is similar to the proof of Case 2 in \Cref{lemma:endpointCZ}. We point out the main differences and leave the details to the readers. We use the $(2,2,1)-$ estimate \eqref{est:MOmegadecay} instead of \Cref{lemma:p1p2pGrowth} for the corresponding term $I_{gg}$ obtained after applying Calder\'on-Zygmund decomposition to the functions $f_1$ and $f_2$. The analysis of the corresponding terms $I_{gb},I_{bg}$ and $I_{bb}$ are similar once we use the pointwise bound \eqref{Highptwise}, with the only exception that we use the single scale estimate \eqref{singlescale:Lpbound_d=1} instead of \eqref{singlescale:L1bound} for the estimate of $I_{bb}$.

	The proof of the decay estimates \eqref{est:MOmegadecay} follows by using estimates \eqref{Mediumptwise} and \eqref{Highptwise} and proceeding exactly the same way as that in the proof of \Cref{lemma:221}.
\end{proof}

\section{Proof of \Cref{Thm:Roughmaximal}}\label{sec:proofmaximal}
First, we note that \[T_\Omega^*(f_1,f_2)(x)\leq\sup\limits_{j\in\Z}\Big|\sum\limits_{i>j}T_\Omega^i(f_1,f_2)(x)\Big|+M_{|\Omega|}(f_1,f_2)(x).\]
By \Cref{Thm:Roughsinglemaximal}, it remains to estimate $\sup\limits_{j\in\Z}\Big|\sum\limits_{i>j}T_\Omega^i(f_1,f_2)(x)\Big|$. We write
\begin{align*}
	\sup\limits_{j\in\Z}\Big|\sum\limits_{i>j}T_\Omega^i(f_1,f_2)(x)\Big|\lesssim&\;T_{\Omega}^{LL,*}(f_1,f_2)(x)+\sum_{k\in\N}T_{\Omega,k}^{HL,*}(f_1,f_2)(x)+\sum_{k\in\N}T_{\Omega,k}^{LH,*}(f_1,f_2)(x)\\
	&\;+\sum_{k\in\N}T_{\Omega,k}^{HH,*}(f_1,f_2)(x),
\end{align*}
where 
\begin{align*}
	T_{\Omega}^{LL,*}(f_1,f_2)(x)=&\;\sup\limits_{j\in\Z}\Big|\sum\limits_{i>j}T^i_\Omega(f_1*\phi_i,f_2*\phi_i)(x)\Big|,\\
	T_{\Omega,k}^{HL,*}(f_1,f_2)(x)=&\;\sup\limits_{j\in\Z}\Big|\sum\limits_{i>j}T^i_\Omega(f_1*\psi_{i+k},f_2*\phi_{i+k-10})(x)\Big|,\\
	T_{\Omega,k}^{LH,*}(f_1,f_2)(x)=&\;\sup\limits_{j\in\Z}\Big|\sum\limits_{i>j}T^i_\Omega(f_1*\phi_{i+k-10},f_2*\psi_{i+k})(x)\Big|,\\
	T_{\Omega,k}^{HH,*}(f_1,f_2)(x)=&\;\sum_{a=-10}^{10}\sup\limits_{j\in\Z}\Big|\sum\limits_{i>j}T^i_\Omega(f_1*\psi_{i+k+a},f_2*\psi_{i+k})(x)\Big|.
\end{align*}
\subsection*{Estimate for $T^{LL,*}_{\Omega}$:} Since the kernel $\sum_{i\in\Z}K^i*(\phi_i\otimes\phi_i)$ is a Calder\'on-Zygmund kernel, the maximal operator $T_{\Omega}^{LL,*}$ is bounded in the region $\mathcal{H}^\infty$ by Cotlar's inequality from \cite{GT2002}.

\subsection*{Estimate for $T^{I,*}_{\Omega,k},\;I\in\{HL,LH,HH\}$:} It is enough to show that \Cref{lemma:MOmega} also holds for $T^{I,*}_{\Omega,k}$ instead of $M^{I,*}_{\Omega,k}$ for $I\in\{HL,LH,HH\}$. Indeed, by employing the arguments used in the proof of \Cref{lemma:MOmega}, these estimates for $T^{I,*}_{\Omega,k}$ follow once we establish the following two inequalities.
\begin{align*}
	|T_{\Omega,k}^{I,*}(f_1,f_2)(x)|&\leq\sum\limits_{i\in\Z}\Big|T^{I,i}_\Omega(f_1,f_2)(x)\Big|,\quad I\in\{HL,LH,HH\},\\
	|T_{\Omega,k}^{I,*}(f_1,f_2)(x)|&\lesssim\left|T^{I}_{\Omega,k}(f_1,f_2)(x)\right|+M_{HL}(T_{\Omega,k}^I(f_1,f_2))(x)+M_{HL}(M_{\Omega,k}^I(f_1,f_2))(x),\quad I\in\{HL,LH\}.
\end{align*}
The first inequality is straightforward. Therefore, we provide the arguments for the second.  We follow the notation introduced in \eqref{notationforpieces} to write,
\begin{align*}
	\sup_{j\in\Z}\left|\sum_{i>j}T^{I,i}_{\Omega,k}(f_1,f_2)(x)\right|\leq \left|\sum_{i\in\Z}T^{I,i}_{\Omega,k}(f_1,f_2)(x)\right|+\sup_{j\in\Z}\left|\sum_{i\leq j}T^{I,i}_{\Omega,k}(f_1,f_2)(x)\right|.
\end{align*}
Thus, it is enough to estimate the second term on the right hand side in the expression above. The Fourier support property $\supp{(T^{I,i}_{\Omega,k}(f_1,f_2))^\wedge}\subset\{2^{j+k-2}\leq|\xi|\leq 2^{i+k+2}\}$ implies that 
\[\bigcup_{i\leq j}\supp{(T^{I,i}_{\Omega,k}(f_1,f_2))^\wedge}\subset\{|\xi|\leq 2^{j+k+2}\},\]
and thus, the second term can be rewritten as
\begin{align*}
	\sup_{j\in\Z}\left|\sum_{i\leq j}T^{I,i}_{\Omega,k}(f_1,f_2)(x)\right|&=\sup_{j\in\Z}\left|\phi_{j+k+2}*\left(\sum_{i\leq j}T^{I,i}_{\Omega,k}(f_1,f_2)\right)(x)\right|\\
	&\leq\sup_{j\in\Z}\left|\phi_{j+k+2}*\left(\sum_{i\in\Z}T^{I,i}_{\Omega,k}(f_1,f_2)\right)(x)\right|+\sup_{j\in\Z}\left|\phi_{j+k+2}*\left(\sum_{i> j}T^{I,i}_{\Omega,k}(f_1,f_2)\right)(x)\right|\\
	&\leq\sup_{j\in\Z}\left|\phi_{j+k+2}*\left(\sum_{i\in\Z}T^{I,i}_{\Omega,k}(f_1,f_2)\right)(x)\right|+\sup_{j\in\Z}\left|\phi_{j+k+2}*\left(\sum_{j+5>i> j}T^{I,i}_{\Omega,k}(f_1,f_2)\right)(x)\right|\\
	&\lesssim M_{HL}(T_{\Omega,k}^I(f_1,f_2))(x)+M_{HL}(M_{\Omega,k}^I(f_1,f_2))(x).
\end{align*}
This concludes the proof of \Cref{Thm:Roughmaximal}.
\section*{Acknowledgement}
Ankit Bhojak is supported by the Science and Engineering Research Board, Department of Science and Technology, Govt. of India, under the scheme National Post-Doctoral Fellowship, file no. PDF/2023/000708. Saurabh Shrivastava acknowledges the support by Anusandhan National Research Foundation, India under the project  ANRF/ARG/2025/000940/MS. 
\bibliography{biblio}
\end{document}